\renewenvironment{proof}[1][\proofname] {\par\pushQED{\qed}\normalfont\topsep6\p@\@plus6\p@\relax\trivlist\item[\hskip\labelsep\sc#1\@addpunct{.}]\ignorespaces}{\popQED\endtrivlist\@endpefalse}
\renewcommand{\section}{\@startsection
{section}
{1}
{0mm}
{-\baselineskip}
{0.5\baselineskip}
{\large \bfseries}}
\renewcommand{\subsection}{\@startsection
{subsection}
{2}
{0mm}
{1mm}
{-1ex}
{\normalfont\normalsize\bfseries}}
\def\defthm#1#2#3#4{
  \newtheorem{#1}[theorem]{#3}
  \newtheorem*{#1*}{#3}
  \newtheorem{#2}[theorem]{#4}
  \newtheorem*{#2*}{#4}
  \crefname{#1}{#3}{#4}
  \crefname{#2}{#4}{#4}  
}
\numberwithin{equation}{section}
\newtheoremstyle{mythm}%
{10pt}
{}
{\itshape}
{}
{\sc}
{.}
{.5em}
{}%
\newtheoremstyle{mydef}%
{10pt}
{3pt}
{}
{}
{\sc}
{.}
{.5em}
{}%
\newtheoremstyle{myrmk}%
{10pt}
{3pt}
{}
{}
{\sc}
{.}
{.5em}
{}%
\theoremstyle{mythm}
\newtheorem{theorem}{Theorem}[section]
\newtheorem*{theorem*}{Theorem}
\crefname{theorem}{Theorem}{Theorems}
\newtheorem*{replemmax}{\reptitle}
 {\end{replemmax}}
\theoremstyle{mydef}
\theoremstyle{myrmk}
\crefname{section}{Section}{Sections}
\def\lam#1{{\lambda}\@lamarg#1:\@endlamarg\@ifnextchar\bgroup{.\,\lam}{.\,}}
\def\@lamarg#1:#2\@endlamarg{\if\relax\detokenize{#2}\relax #1\else\@lamvar{\@lameatcolon#2},#1\@endlamvar\fi}
\def\@lamvar#1,#2\@endlamvar{(#2\,{:}\,#1)}
\def\@lameatcolon#1:{#1}
\def\lamu#1{{\lambda}\@lamuarg#1:\@endlamuarg\@ifnextchar\bgroup{.\,\lamu}{.\,}}
\def\@lamuarg#1:#2\@endlamuarg{#1}
\newcommand{\pullback}[1]{\save*!/#1-1.2pc/#1:(-1,1)@^{|-}\restore}
\newcommand{\drpullback}{\pullback{dr}}
\newcommand{\ie}{\text{i.e.\ }}
\newcommand{\cf}{\text{cf.\,}}
\newcommand{\defeq}{=_{\operatorname{def}}}
\newcommand{\co}{\colon}
\newcommand{\iso}{\cong} 
\newcommand{\op}{\operatorname{op}}
\newcommand{\cat}[1]{\mathbb{#1}}
\newcommand{\catC}{\cat{C}}
\newcommand{\SSet}{\mathbf{SSet}}
\newcommand{\CSet}{\mathbf{CSet}}
\DeclarePairedDelimiter\parens\lparen\rparen
\DeclarePairedDelimiter\braces\lbrace\rbrace
\newcommand{\arghole}{-}
\newcommand{\brarghole}{(\arghole)}
\newcommand{\cc}{\mathbin{\circ}}
\newcommand{\id}{\operatorname{id}}
\newcommand{\cod}{\operatorname{cod}}
\newcommand{\colim}{\operatorname{colim}}
\newcommand{\Id}{\operatorname{Id}}
\newcommand{\Lan}{\operatorname{Lan}}
\newcommand{\Ran}{\operatorname{Ran}}
\newcommand{\CAT}{\mathbf{CAT}}
\newcommand{\cart}{\text{\normalfont{cart}}}
\newcommand{\Adj}{\operatorname{Adj}}
\newcommand{\hatexp}{\operatorname{\widehat{exp}}}
\newcommand{\cal}[1]{\mathcal{#1}}
\newcommand{\calE}{\cal{E}}
\newcommand{\Cof}{\mathsf{Cof}}
\newcommand{\TrivFib}{\mathsf{TrivFib}}
\newcommand{\Fib}{\mathsf{Fib}}
\newcommand{\TrivCof}{\mathsf{TrivCof}}
\newcommand{\liftl}[1]{{\fourIdx{\pitchfork}{}{}{}{\smash{#1}\vphantom{I}}}}
\newcommand{\liftr}[1]{{\fourIdx{}{}{\pitchfork}{}{\smash{#1}\vphantom{I}}}}
\newcommand{\hatotimes}{\mathbin{\hat{\otimes}}}
\newcommand{\eval}{\operatorname{ev}}
\newcommand{\hateval}{\widehat{\eval}}
\newcommand{\interval}{I}
\newcommand{\thetak}{\theta^k}
\newcommand{\lcyl}{\delta^0}
\newcommand{\rcyl}{\delta^1}
\newcommand{\kcyl}{\delta^k}
\newcommand{\kcylinv}{\delta^{1-k}}
\newcommand{\ccyl}{\varepsilon}
\def\ignorespacesandallpars{%
  \@ifnextchar\par
    {\expandafter\ignorespacesandallpars\@gobble}%
    {}%
}
\newcommand{\note}[3][]{\def\auth{#1}\textcolor{#2}{{[\ifx\auth\empty\else\auth: \fi{#3}]}}}
\newcommand{\Psh}{\mathrm{Psh}}
\newcommand{\yon}{\mathrm{y}}
\newcommand{\pAlg}[1]{{#1}\text{-}\mathsf{Map}}
\newcommand{\pCoalg}[1]{{#1}\text{-}\mathsf{Map}}
\newcommand{\palg}[1]{{#1}\text{-}\mathsf{map}}
\newcommand{\pcoalg}[1]{{#1}\text{-}\mathsf{map}}
\newcommand{\TC}{\mathsf{C_t}}
\newcommand{\FF}{\mathsf{F}}
\newcommand{\CC}{\mathsf{C}}
\newcommand{\TF}{\mathsf{F_t}}
\newcommand{\LL}{\mathsf{L}}
\newcommand{\RR}{\mathsf{R}}
\newcommand{\MM}{\mathsf{M}}
\title{The Frobenius condition, right properness, \linebreak and uniform fibrations}
\begin{document}

\begin{abstract}
We develop further the theory of weak factorization systems and algebraic weak factorization systems.
In particular, we give a method for constructing (algebraic) weak factorization systems whose right maps can be thought of as (uniform) fibrations and that satisfy the (functorial) Frobenius condition.
As applications, we obtain a new proof that the Quillen model structure for Kan complexes is right proper, avoiding entirely the use of topological realization and minimal fibrations, and we solve an open problem in the study of Voevodsky's simplicial model of type theory, proving a constructive version of the preservation of Kan fibrations by pushforward along Kan fibrations.
Our results also subsume and extend work by Coquand and others on cubical sets.
\end{abstract}

\author{Nicola Gambino}
\address{School of Mathematics, University of Leeds, Leeds LS2 9JT, UK}
\email{n.gambino@leeds.ac.uk}

\author{Christian Sattler}
\address{School of Mathematics, University of Leeds, Leeds LS2 9JT, UK}
\email{c.sattler@leeds.ac.uk}


\maketitle

\section*{Introduction}

This paper studies the Frobenius condition for weak factorization systems (wfs's), which asserts that pullback along a right map preserves left maps~\cite{garner:topological-simplicial,gambino-garner:idtypewfs}.
One reason for the interest in this condition is that it is closely related to the right properness of a Quillen model structure, which requires that pullback along a fibration preserves weak equivalences.
Indeed, for a Quillen model structure in which the cofibrations are stable under pullback, such as the Cisinski model structures~\cite{cisinski-asterisque}, right properness is equivalent to the Frobenius condition for the wfs of trivial cofibrations and fibrations.
Furthermore, when pushforward along right maps exists, \ie pullback along right maps has a right adjoint, the Frobenius condition is equivalent to the preservation of right maps by pushforward along a right map.
In the category of simplicial sets, this amounts to the preservation of Kan fibrations by pushforward along Kan fibrations, a fact that plays an important role in Voevodsky's simplicial model of univalent foundations~\cite{voevodsky-simplicial-model}.

For our development, we work in the setting of a category $\cal{E}$ equipped with an appropriate cofibrantly generated wfs~$(\Cof, \TrivFib)$, to be thought of as consisting of cofibrations and trivial fibrations, and a functorial cylinder satisfying appropriate assumptions.
Building on ideas in~\cite{cisinski-asterisque}, we then construct a new wfs $(\TrivCof, \Fib)$, to be thought of as consisting of trivial cofibrations and fibrations~(\cref{thm:wfstimes}).
Here, a fibration is defined a map with the right lifting property with respect to a class of maps called here generating trivial cofibrations, which are obtained as (a generalized form of) pushout product between the endpoint inclusions of the functorial cylinder and generating cofibrations.
We then prove that the wfs $(\TrivCof, \Fib)$ satisfies the Frobenius condition~(\cref{thm:non-alg-frobenius}).
When applied in the category of simplicial sets, with the wfs of monomorphisms and trivial Kan fibrations~\cite{quillen-homotopical} as $(\Cof, \TrivFib)$, fibrations in our sense are exactly Kan fibrations.
Therefore, our results yield a proof that the wfs of trivial cofibrations and Kan fibrations satisfies the Frobenius condition, and so give a new proof of right properness of the Quillen model structure for Kan complexes.
One reason for the interest in this proof is that it avoids entirely the use of topological realization, used in~\cite{hovey-model-categories}, and of minimal fibrations, used in~\cite{joyal-tierney:simplicial-homotopy-theory}.

We then obtain analogous results for algebraic weak factorization systems (awfs's)~\cite{garner:small-object-argument,grandis-tholen-nwfs}.
In an awfs, the left and right right maps do not just satisfy a lifting property, but are equipped with a natural family of solutions for lifting problems.
The technical ingredient underpinning this idea is to work not just with classes of maps in a fixed category $\cal{E}$, but with categories $\cal{I}$ equipped with a functor $u \co \cal{I} \to \cal{E}^\to$ (often an inclusion or a forgerful functor), where $\cal{E}^\to$ is the arrow category of $\cal{E}$.
For example, when~$\cal{E}$ is a presheaf category it is natural to define a uniform trivial cofibration to be a right map with respect to the category of monomorphisms and pullback squares, considered with the evident inclusion into $\cal{E}^\to$.
Unfolding the relevant definitions, a uniform trivial fibration is a map equipped with diagonal fillers for lifting problems against monomorphisms, with the fillers satisfying a naturality condition with respect to pullback squares between monomomorphism.
The presence of this algebraic structure remedies some unsatisfactory aspects of wfs's (such as the failure of orthogonality classes to be closed under all colimits), something that has proved to be useful for applications (see~\cite{batanin-cisinski-weber,garner:globular-operator-awfs,garner-homomorphisms} or~\cite{awodey-cubical,coquand-cubical-sets,cohen-et-al:cubicaltt,pitts-cubical-nominal,swan-awfs} for examples).
The Frobenius condition has a natural counterpart in this context, called the functorial Frobenius condition~\cite{garner:topological-simplicial}, which requires that the pullback along right maps not only preserves left maps but also preserves morphisms between them.

Starting from a suitable awfs $(\CC, \TF)$, whose left and right maps are to be thought of as uniform cofibrations and uniform trivial fibrations, we define uniform fibrations by algebraic orthogonality with respect to a category having generating trivial cofibrations as objects.
We then show that there exists an awfs $(\TC, \FF)$ that has uniform fibrations as right maps (\cref{thm:sset-cset-nwfs}) and that satisfies the functorial Frobenius condition (\cref{uniform-fibrations-uniform-frobenius}).
Therefore, pushforward along a uniform fibration preserves uniform fibrations (\cref{uniform-fibrations-frobenius-pushforward}).
An important technical step in this development is the characterization of when an algebraically-free awfs (in the sense of~\cite{garner:small-object-argument}) has the functorial Frobenius property (\cref{thm:frobenius-comparison}), which is more elaborate than its counterpart for ordinary wfs's.

In order to apply our algebraic results in simplicial sets and cubical sets, we show that, given a category of monomorphisms and pullback squares satisfying some mild assumptions, it is possible to construct an awfs $(\CC, \TF)$ satisfying the conditions necessary to apply our results (\cref{rem-lift-suitable}).
Instantiating our definitions we obtain the notion of a uniform Kan fibration, which subsumes the one introduced in~\cite{cohen-et-al:cubicaltt}.
We then establish the existence of an awfs that has uniform Kan fibrations as right maps and that satisfies the functorial Frobenius property, so that pushforward along a uniform Kan fibration preserves uniform Kan fibrations.

These results contribute to the ongoing investigations on whether Voevodsky's simplicial model of univalent foundations~\cite{voevodsky-simplicial-model} admits a constructive version, \ie one that can be defined without the law of excluded middle or the axiom of choice.
In order to do this, it is necessary to overcome the problem that the the preservation of Kan fibrations by pushforward along Kan fibrations~\cite[Lemma~2.3.1]{voevodsky-simplicial-model},
which is necessary for the interpretation of $\Pi$-types in the simplicial model, cannot be proved constructively~\cite[Section~6]{coquand-non-constructivity-kan}.
Here, we solve this problem by working with the notion of a uniform Kan fibration, which is classically equivalent to the usual one, but allows us to prove constructively the preservation of uniform Kan fibrations by pushforward along uniform Kan fibrations.
Indeed, the proof of our results in the case of simplicial sets, as well as cubical sets, are constructive (see~\cref{rem:constructive-small-object}).

Furthermore, our results shed some light on the cubical set models of type theory that have been investigated extensively in recent years~\cite{awodey-cubical,coquand-cubical-sets,cohen-et-al:cubicaltt,pitts-cubical-nominal,swan-awfs}.
Indeed, the independence results in~\cite{coquand-non-constructivity-kan} mentioned above have led the research community to shift the focus on models based on cubical sets and uniform Kan fibrations, since in that setting it is possible to develop a constructive model of univalent foundations~\cite{cohen-et-al:cubicaltt}, showing in particular that pushforward along a uniform Kan fibration preserves uniform Kan fibrations.
But, as our results show, the key aspect that makes it possible to obtain that result on pushforward is the idea of using the algebraic notion of a uniform Kan fibration, rather than working with cubical sets.
Since our results apply to cubical sets, we give also a new proof of the fact that pushforward along a uniform Kan fibration preserves uniform Kan fibrations, avoiding entirely the complex calculations with cubical sets of~\cite{cohen-et-al:cubicaltt,huber-thesis}.
Indeed, one of the initial motivations for this work was to explore whether the theory of uniform fibrations in cubical sets could be developed at a greater level of generality, so as to make it applicable also to simplicial sets.

We also provide a characterization of uniform trivial fibrations in terms of the partial map classifer (\cref{thm:part-map-classifier}), as suggested to us by Thierry Coquand and Andr\'e Joyal.
We then use this characterization to show in what sense the notion of a uniform (trivial) fibration subsumes the standard notion of a (trivial) fibration (\cref{unif-vs-non-unif}) In particular, we prove that a map in a presheaf category can be equipped with the structure of a uniform (trivial) fibration if and only if it is a (trivial) fibration.
As we observe in~\cref{why-uniform}, however, the algebraic notion of a uniform fibration is essential for our results in~\cref{sec:alg-fib} to hold constructively.

\subsection*{Remark}
In order to make the paper more concise, we confine comments on constructivity issues to certain remarks and footnotes.
However, let us point out here for the interested readers that our development does not rely on the law of the excluded middle or the axiom of choice, apart from~\cref{thm:wfstimes,thm:sset-cset-nwfs,rem-lift-suitable}.
For~\cref{thm:sset-cset-nwfs,rem-lift-suitable}, we argue that the result holds constructively for simplicial sets and cubical sets in~\cref{rem:constructive-small-object}.%

\subsection*{Organization of the paper}
\cref{sec:bac} introduces the setting in which we work throughout the paper.
The core of the paper is then organized in two parts: the first part (\cref{sec:fib,sec:frob}) studies wfs's; the second part (\cref{sec:ortf,sec:frobc,sec:alg-fib,sec:alg-frob}) studies awfs's.
In the first part, we introduce fibrations and establish the existence of the corresponding wfs in \cref{sec:fib}, and then prove the Frobenius property for this wfs in \cref{sec:frob}.
In the second part, we begin by establishing basic facts about orthogonality functors (\cref{sec:ortf}) and then establishing necessary and sufficient conditions for an algebraically-free awfs to satisfy the functorial Frobenius condition (\cref{sec:frobc}).
The rest of the second part then proceeds in parallel with the first part.
\cref{sec:alg-fib} (parallel to \cref{sec:fib}) introduces uniform fibrations and establishes the existence of the associated awfs, while~\cref{sec:alg-frob} (parallel to \cref{sec:frob}) proves the Frobenius property for uniform fibrations.
We end the paper in \cref{sec:fib-psh} studying uniform (trivial) fibrations in presheaf categories and relating (trivial) fibrations and uniform (trivial) fibrations.

\subsection*{Acknowledgements}
We are grateful to the referee for the careful reading of the paper and the
helpful suggestions.
We thank Steve Awodey, Thierry Coquand, Richard Garner, Simon Huber, Andr\'e Joyal, Andrew Pitts, Emily Riehl, and Andrew Swan for helpful discussions and comments on earlier versions of the paper, to Marco Larrea Schiavon for pointing out an erroneous statement of \cref{retract-closure} in an earlier version of this paper, and to Fosco Loregian for pointing us to a useful reference.
This material is based on research sponsored by the Air Force Research Laboratory, under agreement number FA8655-13-1-3038, by a grant from the John Templeton Foundation, and by an EPSRC grant (EP/M01729X/1).

\section{Background}
\label{sec:bac}

Throughout the paper, we work with a category $\cal{E}$, assumed to be locally presentable and locally cartesian closed.
In particular, $\cal{E}$ is cocomplete as well as finitely complete and pullback preserves all limits and colimits since it has both a left adjoint and a right adjoint.
For an arrow $f \co X \to Y$, we write $f^* \co \cal{E}_{/Y} \to \cal{E}_{/X}$ for the pullback along $f$, $f_{!} \co \cal{E}_{/X} \to \cal{E}_{/Y}$ for its left adjoint, given by left composition with $f$, and $f_* \co \cal{E}_{/X} \to \cal{E}_{/Y}$ for its right adjoint, which we call pushforward along $f$.
We write $\cal{E}^\to$ for the category of arrows of $\cal{E}$ and $\cal{E}^\to_{\mathrm{cart}}$ for the subcategory of~$\cal{E}^\to$ of arrows and cartesian squares.
Examples of such categories abound; in particular, our assumptions are satisfied not just by presheaf categories, but by arbitrary Grothendieck toposes.

Our definitions and results are illustrated in two running examples.
The first is the category~$\SSet$ of simplicial sets, defined as usual to be the category of presheaves over the simplex category~$\Delta$~\cite{goerss-jardine}.
The second is the category $\CSet$ of cubical sets of~\cite{cohen-et-al:cubicaltt}.
This is defined as the category of presheaves over the category $\Box$ with objects of the form $\Box^A$ with $A$ a finite set and morphisms~$\Box^A \to \Box^B$ given by functions from $B$ to the free de Morgan algebra on
$A$.
Here, a de Morgan algebra is defined to be a bounded distributive lattice $(X, \land, \lor, \top, \bot)$ further equipped with a unary operation $\neg (-) \co X \to X$ satisfying $\neg \neg x = x$, for all $x \in X$, and de Morgan's laws: $\neg (x \land y) = \neg x \lor \neg y$, $\neg (x \lor y) = \neg x \land \neg y$, for all $x, y \in X$.

Thus,~$\CSet$ is the category of cubical sets with symmetries, diagonals, connections, and involutions.
Note that the presence of symmetries precludes $\Box$ from being a Reedy category.
We stress that the only feature of the category $\Box$ relevant for the main results are connections and symmetries.
Thus, our results apply equally to many other variations of cubical sets, excluding however those considered in~\cite{coquand-cubical-sets,huber-thesis}, which do not have connections.

Recall from~\cite{kamps-porter:homotopy} that a \emph{functorial cylinder} in $\calE$ is an endofunctor $\interval \otimes (-) \co \cal{E} \to \cal{E}$ equipped with natural transformations
\[
\lcyl \otimes (-) \co \Id_\cal{E} \to \interval \otimes (-)
\,, \quad
\rcyl \otimes (-) \co \Id_\cal{E} \to \interval \otimes (-)
\,,\]
called the left and right \emph{endpoint inclusions}, respectively.
Such a functorial cylinder is said to have \emph{contractions} if $\lcyl \otimes (-)$ and $\rcyl \otimes (-)$ have a common retraction $\ccyl \otimes (-) \co \interval \otimes (-) \to \Id_\cal{E}$, making the following diagrams commute:
\begin{equation} \label{contractions}
\begin{gathered}
\xymatrix@C+2em{
  \Id_\cal{E}
  \ar[r]^-{\lcyl \otimes (-)}
  \ar@{=}[dr]
&
  \interval \otimes (-)
  \ar[d]^(0.4){\ccyl \otimes (-)}
&
  \Id_\cal{E}
  \ar[l]_-{\rcyl \otimes (-)}
  \ar@{=}[dl]
\\&
  \Id_\cal{E}
\rlap{.}}
\end{gathered}
\end{equation}
Further, a functorial cylinder with contractions as above has \emph{connections} if, for $k \in \braces{0\,, 1}$, there is a natural transformations $c^k \otimes (-) \co \interval \otimes \interval \otimes (-) \to \interval \otimes (-)$ such that the following diagrams commute:
\begin{equation} \label{connections:0}
\begin{gathered}
\xymatrix@C+3em{
  \interval \otimes (-)
  \ar[r]^-{\kcyl \otimes \interval \otimes (-)}
  \ar[d]_{\ccyl \otimes (-)}
&
  \interval \otimes \interval \otimes (-)
  \ar[d]^{c^k \otimes (-)}
&
  \interval \otimes (-)
  \ar[l]_-{\interval \otimes \kcyl \otimes (-)}
  \ar[d]^{\ccyl \otimes (-)}
\\
  \Id_\cal{E}
  \ar[r]_{\kcyl \otimes (-)}
&
  \interval \otimes (-)
&
  \Id_\cal{E}
  \ar[l]^{\kcyl \otimes (-)}
\rlap{.}}
\end{gathered}
\end{equation}
\begin{equation} \label{connections:1}
\begin{gathered}
\xymatrix@C+3em{
  \interval \otimes (-)
  \ar[r]^-{\kcylinv \otimes \interval \otimes (-)}
  \ar@{=}[dr]
&
  \interval \otimes \interval \otimes (-)
  \ar[d]^(0.4){c^k \otimes (-)}
&
  \interval \otimes (-)
  \ar[l]_-{\interval \otimes \kcylinv \otimes (-)}
  \ar@{=}[dl]
\\&
  \interval \otimes (-)
\rlap{.}}
\end{gathered}
\end{equation}
We adopt the convention of associating the tensor product notation to the right.

If the functor $\interval\otimes (-) \co \cal{E} \to \cal{E}$ has a right adjoint,
\begin{equation} \label{equ:cyl-exp-adj}
\begin{gathered}
\xymatrix@C+3em{
  \cal{E}
  \ar@<1ex>[r]^{\interval\otimes (-)}
  \ar@{}[r]|{\bot}
&
  \cal{E}
  \ar@<1ex>[l]^{\exp(I, -)}
\rlap{,}}
\end{gathered}
\end{equation}
then we obtain a \emph{functorial cocylinder}
\[
(\exp(\interval, -), \exp(\bar{\delta}^0, -), \exp(\bar{\delta}^1, -))
\,,\]
\ie a functorial cylinder in the opposite category $\cal{E}^{\op}$~\cite{kamps-porter:homotopy}.
Contractions and connections carry over as well.
In this case, note also that $\interval\otimes (-) \co \cal{E} \to \cal{E}$ preserves colimits.
Sometimes we write $X^\interval$ for $\exp(I, X)$.

Our main examples of functorial cylinders with contractions and connections, discussed below, arise from a monoidal structure, written $(\cal{E}, \otimes, \top)$, and the presence of an \emph{interval object}, \ie an object~$\interval \in \cal{E}$ equipped with maps~$\lcyl, \rcyl \co \top \to \interval$, called the left and right \emph{endpoint inclusions}, respectively.
In such a situation, a functorial cylinder is given by tensoring with $\interval$.
It is immediate to isolate what structure on the interval object guarantees the presence of contractions and connections on the associated functorial cylinder: contractions are induced by a map $\ccyl \co \interval \to \top$ and connections by maps $c^k \co \interval \otimes \interval \to \interval$, for $k \in \braces{0\,, 1}$, provided that suitable axioms hold.
Note that if the unit $\top$ of the monoidal structure is a terminal object, then we have contractions in a unique way.

\begin{example} \label{exa:cyl-in-sset}
In $\SSet$, considered as a monoidal category with respect to its cartesian structure, an interval object is given by $\Delta^1$ with endpoint inclusions~$\kcyl \co \braces{k} \to \Delta^1$ given by special cases of horn inclusions, $\kcyl = h^1_k$, for $k \in \braces{0\,, 1}$.
We have contractions since the monoidal structure is cartesian and connections $c^k \co \Delta^1 \times \Delta^1 \to \Delta^1$, with $k \in \braces{0\,, 1}$, given on points by $c^0(x, y) = \min(x, y)$ and $c^1(x, y) = \max(x, y)$.
These are determined uniquely since~$\Delta^1$ and~$\Delta^1 \times \Delta^1$ are nerves of posets and $\min$ and $\max$ are monotone.
\end{example}

\begin{example} \label{exa:cyl-in-cuset}
The category $\Box$ has finite products given by disjoint unions of sets, which makes cubical sets into a cartesian monoidal closed category.
In $\CSet$, an interval object with contractions is given by $\Box^1$ with endpoint inclusions $\kcyl \co \braces{k} \to \Box^1$ given by the maps from a singleton set to the free de Morgan algebra on an empty set that pick the bottom element for $k = 0$ and the top element for $k =1$.
We have contractions since the unit of the monoidal structure is a terminal object.
Connection operations $c^k \co \Box^1 \otimes \Box^1 \iso \Box^2 \to \Box^1$ for $k \in \braces{0\,, 1}$ are given by the maps from a singleton set to the free de Morgan algebra on a two-element set $\braces{x\,, y}$ that pick $x \wedge y$ and $x \vee y$, respectively.
\end{example}

For the remainder of this section and the next one, we work with ordinary weak factorization systems (wfs's)~\cite{bousfield-wfs} and use standard notation and terminology.
For example, for a class $\cal{I}$ of maps in $\cal{E}$, we write $\liftr{\cal{I}}$ for the class of maps having the right lifting property with respect to all elements of $\cal{I}$.
Let us recall the definition of the Frobenius property for a wfs.

\begin{definition}
We say that a wfs $(\cal{L}, \cal{R})$ in $\cal{E}$ has the \emph{Frobenius property} if the pullback of an $\cal{L}$-map along an
$\cal{R}$-map is again an $\cal{L}$-map.
\end{definition}

See~\cite{clementino:frobenius} for an explanation of the connection between the Frobenius property for a wfs and Lawvere's Frobenius condition~\cite{lawvere-equality}.
As mentioned in the introduction, a Quillen model structure~$(\mathsf{Weq}, \Fib, \Cof)$ in which the cofibrations are stable under pullback (such as Cisinski model structures~\cite{cisinski-asterisque}, where the cofibrations are the monomorphisms) is right proper, \ie the pullback of a weak equivalence along a fibration is again a weak equivalence, if and only if the wfs~$(\TrivCof, \Fib)$ of trivial cofibrations and fibrations has the Frobenius property.

We say that a wfs $(\cal{L}, \cal{R})$ is \emph{free} on a class of maps $\cal{I}$ (not necesssarily a set) if
$\cal{R} = \liftr{\cal{I}}$.
If this is the case, then $\cal{L} = \liftl{(\liftr{\cal{I}})}$.
Of course, a wfs could be free on many different classes of maps.
By Quillen's small object argument~\cite{quillen-homotopical}, every set $\cal{I}$ determines a wfs $(\cal{L},
\cal{R})$ free on $\cal{I}$.
These are what we call \emph{cofibrantly generated wfs's}.
The next, very simple, proposition provides a convenient way of checking that a cofibrantly generated wfs satisfies the Frobenius condition and will be used in the proof of~\cref{thm:non-alg-frobenius}.

\begin{proposition} \label{thm:frobenius-equivalence}
Let $(\cal{L}, \cal{R})$ be a wfs that is free on a class of maps $\cal{I}$.
Then the following are equivalent:
\begin{enumerate}[(i)]
\item $(\cal{L}, \cal{R})$ has the Frobenius property.
\item The pushforward along an $\cal{R}$-map preserves $\cal{R}$-maps.
\item The pullback of an $\cal{I}$-map along an $\cal{R}$-map is an $\cal{L}$-map.
\end{enumerate}
\end{proposition}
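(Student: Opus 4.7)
The strategy is a cyclic proof $\text{(i)} \Rightarrow \text{(iii)} \Rightarrow \text{(ii)} \Rightarrow \text{(i)}$. The essential tool in the latter two implications is the adjunction $f^* \dashv f_*$ attached to any $\cal{R}$-map $f$, together with the closure of $\cal{R}$ under pullback that holds automatically in any wfs. Note that no small object argument will be needed, so the argument should go through for $\cal{I}$ a proper class. The implication (i) $\Rightarrow$ (iii) is immediate, since $\cal{R} = \liftr{\cal{I}}$ gives $\cal{I} \subseteq \liftl{(\liftr{\cal{I}})} = \cal{L}$, so (iii) is a special case of (i).

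For (iii) $\Rightarrow$ (ii), I fix $f \co C \to Y$ in $\cal{R}$ and $g \co Z \to C$ in $\cal{R}$. Since $\cal{R} = \liftr{\cal{I}}$, to show $f_* g \in \cal{R}$ it suffices to produce a diagonal filler for any lifting square of $i \co A \to B$ in $\cal{I}$ against $f_* g$. Given such a square with bottom edge $v \co B \to Y$, I would view $i$ as an arrow of $\cal{E}_{/Y}$ (with $A$ over $Y$ via $v \cc i$) and transpose along $f^* \dashv f_*$ to an equivalent lifting problem in $\cal{E}_{/C}$ whose right leg is $g$ and whose left leg $f^*(i)$ is the pullback of $i$ along the projection $B \times_Y C \to B$. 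Since this projection is itself a pullback of $f$, it lies in $\cal{R}$, so (iii) places the left leg in $\cal{L}$; the lift against $g$ then exists and transposes back to a lift of the original square.

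For (ii) $\Rightarrow$ (i), I fix $\ell \in \cal{L}$ and $f \co C \to Y$ in $\cal{R}$. To show $f^* \ell \in \cal{L} = \liftl{\cal{R}}$, I take $r$ in $\cal{R}$ together with a lifting square of $f^* \ell$ against $r$ with bottom edge $v \co C \to B$. First I pull $r$ back along $v$: the resulting map $v^* r$ belongs to $\cal{R}$ by stability under pullback, the original square factors through $v^* r$, and $v^* r$ is now an arrow of $\cal{E}_{/C}$, so it suffices to lift $f^* \ell$ against $v^* r$ in that slice. Transposing along $f^* \dashv f_*$ yields a lifting problem of $\ell$ against $f_*(v^* r)$; by (ii) the latter is in $\cal{R}$, and since $\ell \in \cal{L}$ the lift exists. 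Its transpose solves the original square.

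The only delicate point will be the bookkeeping of the adjoint transposition on arrow categories and the identification of $f^*(i)$ with the pullback of $i$ along the projection $B \times_Y C \to B$; both steps are formal consequences of the relevant definitions, so I do not expect any substantial obstacle.
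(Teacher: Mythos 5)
Your proposal is correct and is essentially the argument the paper has in mind: the paper's proof is only the one-line remark that the claims ``follow by standard properties of the interaction between lifting properties and adjoint functors,'' and your cycle (i) $\Rightarrow$ (iii) $\Rightarrow$ (ii) $\Rightarrow$ (i), transposing lifting problems across $f^* \dashv f_*$ and using stability of $\cal{R}$ under pullback, is precisely that standard argument spelled out. The details check out, including the identification of $f^*(i)$ with the pullback of $i$ along the projection $B \times_Y C \to B$ and the observation that no smallness of $\cal{I}$ is needed.
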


\begin{proof}
The claims follow by standard properties of the interaction between lifting properties and adjoint functors.
\end{proof}

As we will see in \cref{sec:frobc}, the algebraic counterpart of \cref{thm:frobenius-equivalence}, given by \cref{thm:frobenius-comparison}, is rather more complex to prove.

\section{Fibrations}
\label{sec:fib}

From here until the end of~\cref{sec:frob} we assume that our fixed category $\cal{E}$ is equipped with a functorial cylinder $\interval\otimes (-) \co \cal{E} \to \cal{E}$ with contractions and connections as well as a right adjoint as in~\eqref{equ:cyl-exp-adj}.

The notion of a fibration that we introduce and study in this section and the next is relative not only to $\cal{E}$ and its functorial cylinder, but also to a wfs on $\cal{E}$ satisfying some assumptions that we encapsulate in the notion of a \emph{suitable wfs}, given in \cref{thm:suitable-wfs} below.
In order to state that definition, we need some notation.
For $k \in \braces{0, 1}$ and a map $i \co A \to B$ in $\cal{E}$, we define the \emph{Leibniz product}~\cite{riehl-verity:reedy} of the endpoint inclusion~$\kcyl \otimes (-)$ and~$i$ to be the map
\begin{equation} \label{equ:leib-prod}
\kcyl \hatotimes i \co (\interval \otimes A) +_A B \to \interval \otimes B
\end{equation}
determined uniquely by the following pushout diagram:
\begin{equation} \label{delta-otimes-definition}
\begin{gathered}
\xymatrix@C=1.2cm{
  A
  \ar[r]^{i}
  \ar[d]_{\kcyl \otimes A}
&
  B
  \ar@/^2pc/[ddr]^{\kcyl \otimes B}
  \ar[d]
&\\
  \interval \otimes A
  \ar@/_1pc/[drr]_{\interval \otimes i}
  \ar[r]
&
  (\interval \otimes A) +_A B
  \ar[dr]^-{\delta^k \hatotimes i}
  \pullback{ul}
&\\&&
  \interval \otimes B
\rlap{.}}
\end{gathered}
\end{equation}
We shall use Leibniz products with endpoint inclusions throughout the paper.
In the next definition and afterwards, we write $(\Cof, \TrivFib)$ to denote a suitable wfs, and refer to maps in $\Cof$ as \emph{cofibrations} and maps in $\TrivFib$ as \emph{trivial fibrations} since this helps us to convey some of the basic intuition motivating our development.

\begin{definition} \label{thm:suitable-wfs}
A wfs $(\Cof, \TrivFib)$ in $\cal{E}$ is said to be \emph{suitable} if the following hold:
\begin{enumerate}[({S}1)]
\item $(\Cof, \TrivFib)$ is cofibrantly generated.
\item Every object of $\cal{E}$ is cofibrant, \ie for every $X \in \cal{E}$, the unique map $\bot_X \co 0 \to X$ is a cofibration.
\item Cofibrations are closed under pullback, \ie for every pullback square
\[
\xymatrix{
  B
  \ar[d]_j
  \ar[r]^q
  \pullback{dr}
&
  A
  \ar[d]^i
\\
  Y
  \ar[r]_p
&
  X
\rlap{,}}
\]
if $i \in \Cof$ then $j \in \Cof$.
\item Cofibrations are closed under Leibniz product with the endpoint inclusions, \ie for every $i \in \Cof$ we have~$\kcyl \hatotimes i \in \Cof$.
\end{enumerate}
\end{definition}

\begin{example} \label{thm:generation-presheaf-cisinski}
Let $\cal{E}$ be a presheaf category.
By a result of Cisinski~\cite[Proposition~1.2.27]{cisinski-asterisque}, $\cal{E}$ admits a cofibrantly generated wfs whose cofibrations are the mononomorphisms.%
\footnote{If $\cal{E}$ is a category of presheaves over an elegant Reedy category~\cite{bergner-rezk-elegant}, it is possible to be more specific: the boundary inclusions (\ie the latching objects inclusions for representables) form a set of generating cofibrations.}
This wfs then satisfies conditions~(S1) to~(S3) of \cref{thm:suitable-wfs}.
For it to be suitable, \ie to also satisfy condition~(S4), it suffices that the endpoint inclusions $\delta^0$ and $\delta^1$ are cartesian natural transformations.
This will be the case if the functorial cylinder is induced by cartesian product with an interval object as in our main examples.
\end{example}

Let us now fix a suitable wfs $(\Cof,\TrivFib)$.
We let $\cal{I}$ be a set of maps that cofibrantly generates it, which exists by assumption (S1), and refer its elements as \emph{generating cofibrations}.
We wish to define a new wfs, to be thought of as consisting of trivial cofibrations and fibrations.
For this, we begin by defining the set of maps that determines the notion of a fibration.
This set is defined by letting
\begin{equation} \label{equ:cylinc}
\cal{I}_\otimes \defeq \braces{\kcyl \hatotimes i \mid k \in \braces{0\,, 1} \, , \; i \in \cal{I}}
\, .\end{equation}
We refer to elements of $\cal{I}_\otimes$ as \emph{generating trivial cofibrations}.

\begin{definition} \label{thm:fib}
A \emph{fibration} is a map with the right lifting property with respect to all generating trivial cofibrations.
\end{definition}

Before illustrating the notion of a fibration in our running examples, we establish that it is actually independent of the choice of the set of generating cofibrations and that fibrations are the right class of a wfs.
For the first goal, we use the cocylinder structure on the right adjoint $\exp(\interval, -)$ to $\interval \otimes (-)$.
For $k \in \braces{0 \,, 1}$ and a map $f \co X \to Y$ in $\calE$, we define the \emph{Leibniz exponential} of~$\exp(\bar{\delta}^k, -)$ and $f$ to be the map
\[
\hatexp(\bar{\delta}^k, f) \co X^\interval \to Y^\interval \times_{Y} X
\]
determined uniquely by the following pullback diagram:
\begin{equation} \label{hatexp-delta-definition}
\begin{gathered}
\xymatrix{
  X^\interval
  \ar@/^2pc/[drr]^-{\exp(\bar{\delta}^k, X)}
  \ar@/_2pc/[ddr]_{\exp(\interval, f)}
  \ar[dr]^-{\hatexp(\bar{\delta}^k, f)}
&&\\&
  Y^\interval \times_{Y} X
  \ar[d]
  \ar[r]
  \pullback{dr}
&
  X
  \ar[d]^f
\\&
  Y^\interval
  \ar[r]_{\exp(\bar{\delta}^k, Y)}
&
  Y
\rlap{.}}
\end{gathered}
\end{equation}
This definition can be extended so as to determine a functor, right adjoint to the Leibniz product functor:
\begin{equation}
\label{equ:leibniz-prod-exp}
\xymatrix@C+4em{
  \cal{E}^\to
  \ar@<1ex>[r]^{\kcyl \hatotimes (-)}
  \ar@{}[r]|{\bot}
&
  \cal{E}^\to
  \ar@<1ex>[l]^{\hatexp(\bar{\delta}^k, -)}
\rlap{.}}
\end{equation}
The desired characterization of fibrations, stated in the next proposition, now follows from standard adjointness arguments.

\begin{proposition} \label{thm:char-fib-nonalg}
For a map $p \co X \to Y$ in $\cal{E}$, the following are equivalent:
\begin{enumerate}[(i)]
\item $p$ is a fibration,
\item $\hatexp(\bar{\lcyl}, p)$ and $\hatexp(\bar{\rcyl}, p)$ are trivial fibrations. \qed
\end{enumerate}
\end{proposition}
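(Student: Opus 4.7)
The plan is to derive this directly from the two-variable adjunction \eqref{equ:leibniz-prod-exp} between Leibniz product with an endpoint inclusion and Leibniz exponential with the dual endpoint projection, combined with the fact that the wfs $(\Cof, \TrivFib)$ is free on $\cal{I}$ (so that $\TrivFib = \liftr{\cal{I}}$).

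First I would record the standard adjointness principle: if $L \dashv R$ is an adjunction between arrow categories sending lifting problems to lifting problems compatibly, then for any maps $i, p \in \cal{E}^\to$, $L i$ has the left lifting property against $p$ if and only if $i$ has the left lifting property against $R p$. In our situation the adjunction is $\kcyl \hatotimes (-) \dashv \hatexp(\bar{\delta}^k, -)$, and one checks (by the universal properties defining the pushout in \eqref{delta-otimes-definition} and the pullback in \eqref{hatexp-delta-definition}) that commutative squares from $\kcyl \hatotimes i$ to $p$ correspond, naturally under the adjunction, to commutative squares from $i$ to $\hatexp(\bar{\delta}^k, p)$, and that diagonal fillers correspond as well. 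Hence for each fixed $k \in \braces{0, 1}$ and each $i \in \cal{I}$,
\[
p \in \liftr{\{\kcyl \hatotimes i\}}
\quad\Longleftrightarrow\quad
\hatexp(\bar{\delta}^k, p) \in \liftr{\{i\}}.
\]

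With that in hand, the proposition is a matter of quantifying over $i \in \cal{I}$ and $k \in \braces{0,1\,}$. By \cref{thm:fib}, $p$ is a fibration iff $p \in \liftr{\cal{I}_\otimes}$, i.e.\ iff for every $k \in \braces{0, 1}$ and every $i \in \cal{I}$, $p$ has the right lifting property against $\kcyl \hatotimes i$. Applying the equivalence above for each such $(k, i)$ and then using that $\TrivFib = \liftr{\cal{I}}$, this is equivalent to $\hatexp(\bar{\delta}^k, p) \in \TrivFib$ for each $k \in \braces{0, 1}$, which is exactly condition~(ii).

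There is no real obstacle here; the only thing that needs genuine checking is the compatibility of the adjunction \eqref{equ:leibniz-prod-exp} with lifting problems, i.e.\ the verification that under the hom-set bijection induced by $\kcyl \hatotimes (-) \dashv \hatexp(\bar{\delta}^k, -)$, commutative squares correspond to commutative squares and fillers correspond to fillers. This is the standard ``Leibniz adjointness'' fact and can be left to the reader, which is presumably why the authors say the result follows from standard adjointness arguments.
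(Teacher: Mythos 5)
Your proposal is correct and is exactly the "standard adjointness argument" the paper invokes without spelling out: pass lifting problems across the Leibniz adjunction $\kcyl \hatotimes (-) \dashv \hatexp(\bar{\delta}^k, -)$ of \eqref{equ:leibniz-prod-exp}, then quantify over $i \in \cal{I}$ and $k \in \braces{0,1}$, using $\Fib = \liftr{(\cal{I}_\otimes)}$ and $\TrivFib = \liftr{\cal{I}}$. No discrepancies with the paper's (omitted) proof.
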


We now define a \emph{trivial cofibration} to be a map with the left lifting property with respect to all fibrations.
We write $\Fib$ for the class of fibrations and $\TrivCof$ for the class of trivial cofibrations, so that $\Fib = \liftr{(\cal{I}_\otimes)}$ and $\TrivCof = \liftl{\Fib}$.
The next proposition is immediate.

\begin{proposition} \label{thm:wfstimes}
$(\TrivCof, \Fib)$ is a wfs in $\cal{E}$.
\end{proposition}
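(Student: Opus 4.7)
The plan is to invoke Quillen's small object argument, which applies because $\cal{E}$ is locally presentable and $\cal{I}_\otimes$ is a (small) set of maps. First I would observe that $\cal{I}_\otimes$ is indeed a set: it is indexed by the two-element set $\braces{0\,,1}$ and the set $\cal{I}$ of generating cofibrations (which is a set by hypothesis~(S1) of \cref{thm:suitable-wfs}). Hence the Leibniz product construction produces only a set's worth of maps.

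Next, since $\cal{E}$ is locally presentable, the small object argument applied to $\cal{I}_\otimes$ yields a functorial factorization of every arrow of $\cal{E}$ as a map in $\liftl{(\liftr{(\cal{I}_\otimes)})}$ followed by a map in $\liftr{(\cal{I}_\otimes)}$. By the general theory of cofibrantly generated weak factorization systems, this produces a wfs whose right class is $\liftr{(\cal{I}_\otimes)}$ and whose left class is $\liftl{(\liftr{(\cal{I}_\otimes)})}$. By definition of $\Fib$ and $\TrivCof$, these two classes are exactly $\Fib$ and $\TrivCof$ respectively, since $\Fib = \liftr{(\cal{I}_\otimes)}$ and $\TrivCof = \liftl{\Fib} = \liftl{(\liftr{(\cal{I}_\otimes)})}$.

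Since each step is completely standard and no step involves serious obstacles, there is no real hard part here; the only thing to check is that the hypotheses for the small object argument are in place, which they are by our standing assumption that $\cal{E}$ is locally presentable. The proof is essentially a one-line appeal to the cofibrantly generated wfs produced by the small object argument applied to the set $\cal{I}_\otimes$.
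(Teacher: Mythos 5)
Your proof is correct and follows exactly the paper's route: the paper likewise proves \cref{thm:wfstimes} by a one-line appeal to Quillen's small object argument applied to the generating set $\cal{I}_\otimes$, with the identification $\Fib = \liftr{(\cal{I}_\otimes)}$ and $\TrivCof = \liftl{\Fib}$ built into the definitions. Your additional remarks that $\cal{I}_\otimes$ is a set and that $\cal{E}$ is locally presentable are just the standing hypotheses making the argument go through.
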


\begin{proof}
By Quillen's small object argument, with $\cal{I}_\otimes$ as the generating set.
\end{proof}

\begin{example} \label{thm:fib-is-kan}
In $\SSet$, let~$(\Cof, \TrivFib)$ be the wfs of monomorphisms and trivial Kan fibrations, which is cofibrantly generated by the set of boundary inclusions
\[
\cal{I} = \braces{i^n \co \partial \Delta^n \to \Delta^n \mid n \in \mathbb{N}}
.\]
Then a fibration in our sense is a map with the right lifting property with respect to all maps of the form
\[
h_k^1 \hatotimes i^n \co (\Delta^1 \times \partial \Delta^n ) \cup (\braces{k} \times \Delta^n ) \to \Delta^1 \times \Delta^n
\, ,\]
given by the Leibniz product of an endpoint inclusion $h_k^1 \co \braces{k} \to \Delta^1$ with a boundary inclusion~$i^n \co \partial \Delta^n \to \Delta^n$.
This is shown equivalent to the usual notion of Kan fibration in~\cite[Chap.~IV, Sec.~2]{gabriel-zisman:calculus-of-fractions}.
Thus, the wfs $(\TrivCof, \Fib)$ of \cref{thm:wfstimes} is the wfs of trivial cofibrations and Kan fibrations associated to the Quillen model structure for Kan complexes~\cite{quillen-homotopical}.
\end{example}

\begin{example} \label{nonalgebraic-cof}
In $\CSet$, let $(\Cof, \TrivFib)$ be the wfs in which~$\Cof$ consists of all monomorphisms, which is suitable by~\cref{thm:generation-presheaf-cisinski}.
A fibration in $\CSet$ will be called a \emph{Kan fibration}.
To illustrate the relationship with the standard Kan filling condition for cubes, let $i^n \co \partial \Box^n \to \Box^n$, for $n \in \mathbb{N}$, denote the boundary inclusion of the $n$-cube $\Box^n$, which is given by the $n$-fold Leibniz product of~$[\lcyl, \rcyl] \co \braces{0\,, 1} \to \Box^1$.
The Leibniz product in $\CSet$ can be seen to preserve monomorphisms, hence $i^n$ will be a cofibration.
Note however that the boundary inclusions $i^n$ will not form a generating set of cofibrations:
for example, the inclusion $\Box^1 \to \Box^2$ given by the diagonal does not lie in the saturation of the boundary inclusions.
The maps
\[
\lcyl \hatotimes i^n \co \sqcup_1^{1+n} \to \Box^{1+n}
\,, \quad
\rcyl \hatotimes i^n \co \sqcap_1^{1+n} \to \Box^{1+n}
\,,\]
for $n \in \mathbb{N}$, are trivial cofibrations.
Since the cube category has symmetries, a Kan fibration in $\CSet$ has the right lifting property also with respect to open box inclusions, which are counterparts in cubical sets of the horn inclusions in simplicial sets.
However, these will not form a generating set of trivial cofibrations.
\end{example}

\begin{remark} \label{cisinski-remark}
Let us also relate the notion of a fibration introduced here with the notion of a naive fibration introduced by Cisinski~\cite{cisinski-asterisque}.
So let $(\Cof, \TrivFib)$ be a wfs as in~\cref{thm:generation-presheaf-cisinski} and let $\cal{I}$ be a generating set of cofibrations.
Let $m \otimes (-) \co \Id_\cal{E} + \Id_\cal{E} \to \interval \otimes (-)$ be the natural transformation with components
\[
[\lcyl \otimes X, \rcyl \otimes X] \co X + X \to \interval \otimes X
\]
for $X \in \cal{E}$.
A \emph{naive Cisinski fibration} would be defined as a map with the right lifting property with respect to the class
\begin{align*}
\cal{I}_\otimes'
&\defeq
\braces{(m \hatotimes (-))^n (\kcyl \hatotimes i) \mid n \in \mathbb{N} \,, \; k \in \braces{0\,,1} \,, \; i \in \cal{I}}
\\
&=_{\phantom{\operatorname{def}}}
\braces{m \hatotimes \ldots \hatotimes m \hatotimes \kcyl \hatotimes i \mid k \in \braces{0\,,1} \,, \; i \in \cal{I}}
.\end{align*}
Informally, one could think of this set as the closure of the set of generating trivial cofibrations~$\cal{I}_\otimes$ under Leibniz product with $m$.
Since $\cal{I}_\otimes \subseteq \cal{I}'_\otimes$, every naive Cisinski fibration is a fibration in our sense.

Our reason for working with $\cal{I}_\otimes$ instead of this class is that they coincide in a lot of cases.
For example, assume that the functorial cylinder is induced by tensoring with an interval object in a symmetric monoidal category and~$\Cof$ is closed under Leibniz product with the boundary inclusion $[\lcyl, \rcyl] \co 1 + 1 \to \interval$, which is the case in our main examples, where $\Cof$ consists of all monomorphisms, the map $[\lcyl, \rcyl]$ is a monomorphism, and Leibniz product preserves monomorphisms.
Then, by permuting the Leibniz product, one sees that every element of $\cal{I}_\otimes'$ is isomorphic to an element of $\cal{I}_\otimes$.

To discuss a second case where fibrations and naive Cisinski fibrations coincide, we use some notions and results from~\cref{sec:frob}.
If we assume generalized connections (in the sense of $(1+n+1)$-ary operations that behave as connections for fixed middle~$n$ arguments), and that $\Cof$ is closed under Leibniz product with $[\lcyl, \rcyl]$ (for example, this is the case if $[\lcyl, \rcyl]$ is a monomorphism and the maps in~$\Cof$ are the monomorphisms in a presheaf category), then the maps in $\cal{I}'_\otimes$ can still be shown to be strong homotopy equivalences.
This gives rise to an inclusion $\cal{I}_\otimes' \subseteq \Cof \cap S$ (where $S$ is the class of strong homotopy equivalences that we define in \cref{sec:frob}).
By \cref{thm:main-sheretract}, it then follows that every fibration is a naive Cisinski fibration.
\end{remark}

We conclude this section with some observations on the wfs's $(\Cof, \TrivFib)$ and $(\TrivCof, \Fib)$.
First, since $\Cof$ is closed under Leibniz products with the endpoint inclusions, we have that~$\cal{I}_\otimes \subseteq \Cof$.
Therefore, since $\Cof$ is saturated, we have $\TrivCof \subseteq \Cof$, \ie every trivial cofibration is a cofibration.
Second, observe that \cref{thm:char-fib-nonalg} implies, by standard orthogonality arguments, that $\Fib = \liftr{(\Cof_\otimes)}$, where $\Cof_\otimes \defeq \braces{\delta^k \hatotimes i \mid k \in \braces{0\,, 1} \,,\, i \in \Cof \,}$.
Finally, we establish a corollary of~\cref{thm:char-fib-nonalg} which will be used to prove~\cref{thm:main-sheretract,thm:non-alg-frobenius-she}, which are two of the key steps in establishing the Frobenius property for the wfs $(\TrivCof, \Fib)$ in~\cref{sec:frob}.

\begin{corollary} \label{thm:kcyl-of-cof-is-trivcof-non-alg}
Let $f \co X \to Y$ in $\cal{E}$.
If $f \in \Cof$, then $\kcyl \hatotimes f \in \TrivCof$, for $k \in \braces{0 \,, 1}$.
\end{corollary}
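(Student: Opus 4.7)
The plan is to derive the statement directly from the adjunction between the Leibniz product and Leibniz exponential displayed in~\eqref{equ:leibniz-prod-exp}, together with the characterization of fibrations in \cref{thm:char-fib-nonalg}. The approach is essentially the one hinted at in the paragraph immediately preceding the corollary, where it is observed that $\Fib = \liftr{(\Cof_\otimes)}$; the claim then amounts to the tautology $\Cof_\otimes \subseteq \liftl{\liftr{(\Cof_\otimes)}}$.

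Unpacking this: since $\TrivCof = \liftl{\Fib}$ by definition, it suffices to show that, for each fibration $p \co X \to Y$, every lifting problem of $\kcyl \hatotimes f$ against $p$ admits a solution. Under the adjunction $\kcyl \hatotimes (-) \dashv \hatexp(\bar{\delta}^k, -)$ of~\eqref{equ:leibniz-prod-exp}, such lifting problems transpose bijectively to lifting problems of $f$ against $\hatexp(\bar{\delta}^k, p)$. By \cref{thm:char-fib-nonalg}, the hypothesis that $p$ is a fibration implies that $\hatexp(\bar{\delta}^k, p)$ is a trivial fibration; since $f \in \Cof$ and $(\Cof, \TrivFib)$ is a wfs, the transposed problem has a solution, whose adjoint transpose is the desired lift.

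There is no substantive obstacle: the content of the corollary is entirely formal, packaging the adjointness between $\hatotimes$ and $\hatexp$ with the defining characterization of fibrations. The only care needed is to keep the index $k \in \braces{0\,, 1}$ fixed throughout the transposition, so that the same endpoint inclusion used to form $\kcyl \hatotimes f$ on the left appears in $\hatexp(\bar{\delta}^k, p)$ on the right.
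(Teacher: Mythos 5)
Your proposal is correct and follows essentially the same route as the paper's proof: transpose lifting problems of $\kcyl \hatotimes f$ against a fibration $p$ across the adjunction~\eqref{equ:leibniz-prod-exp} to lifting problems of $f$ against $\hatexp(\bar{\delta}^k, p)$, which is a trivial fibration by \cref{thm:char-fib-nonalg}, and conclude using $f \in \Cof$. The paper phrases this via the equivalences $\kcyl \hatotimes f \in \liftl{\Fib} \Leftrightarrow (\forall p \in \Fib)\, f \pitchfork \hatexp(\bar{\delta}^k, p)$, which is exactly your argument.
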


\begin{proof}
Assume $f \in \Cof$ and let $k \in \braces{0 \,, 1}$.
In order to show that $\kcyl \hatotimes f \in \TrivCof$, it suffices to show that $\kcyl \hatotimes f \in \liftl{( \liftr{\TrivCof})}$, \ie $\kcyl \hatotimes f \in \liftl{\Fib}$.
We have
\begin{align*}
\kcyl \hatotimes f \in \liftl{\Fib}
&\Leftrightarrow
(\forall p \in \Fib) \, \kcyl \hatotimes f \pitchfork p
\\&\Leftrightarrow
(\forall p \in \Fib) \, f \pitchfork \hatexp(\bar{\delta}^k, p)
\end{align*}
But we know that if $p \in \Fib$, then $ \hatexp(\bar{\delta}^k, p) \in \TrivFib$ by~\cref{thm:char-fib-nonalg}.
\end{proof}

\section{The Frobenius property for fibrations}
\label{sec:frob}

We continue to work with our fixed locally presentable and locally cartesian closed category $\cal{E}$, equipped with a functorial cylinder $\interval\otimes (-) \co \cal{E} \to \cal{E}$ with contractions and connections, and with a right adjoint as in~\eqref{equ:cyl-exp-adj}, and a fixed suitable wfs $(\Cof, \TrivFib)$ in $\calE$, with generating set~$\cal{I}$.

Our aim in this section is to show that the wfs $(\TrivCof, \Fib)$ of \cref{thm:wfstimes} satisfies the Frobenius property.
Our proof of the Frobenius property consists of three main steps.
First, we introduce the notion of a \emph{strong homotopy equivalence} (\cref{def:strhe}) and give a characterization of strong homotopy equivalences as certain retracts (\cref{strong-h-equiv-as-section-non-alg}).
Second, we show that every generating trivial cofibration is both a cofibration and a strong homotopy equivalence and that the cofibrations which are strong homotopy equivalences are trivial cofibrations (\cref{thm:main-sheretract}).
Third, we show that the pullback of a strong homotopy equivalence along a fibration is again a strong homotopy equivalence (\cref{thm:non-alg-frobenius-she}).
With these facts in place, the Frobenius condition for $(\TrivCof, \Fib)$ follows easily (\cref{thm:non-alg-frobenius}).

We begin by introducing strong homotopy equivalences.
For this, we need to review some preliminary notions.
Let $f, g \co X \to Y$ be maps in $\cal{E}$.
Recall that a \emph{homotopy from $f$ to $g$}, denoted $\phi \co f \sim g$, is a morphism $\phi \co \interval \otimes X \to Y$ such that the following diagram commutes:
\begin{equation} \label{equ:homotopy}
\begin{gathered}
\xymatrix@C=1.2cm{
  X
  \ar[r]^-{\lcyl \otimes X}
  \ar[dr]_{f}
&
  \interval \otimes X
  \ar[d]^(0.4){\phi}
&
  X
  \ar[dl]^{g}
  \ar[l]_-{\rcyl \otimes X}
\\&
  Y
\rlap{.}}
\end{gathered}
\end{equation}
We say that a map $f \co X \to Y$ in $\cal{E}$ is called a \emph{left} (or {\emph{$0$-oriented}) \emph{homotopy equivalence} if there exist $g \co Y \to X$ and homotopies $\phi \co g \cc f \sim \id_X $, $\psi \co f \cc g \sim \id_Y$.
Dually, we have a \emph{right} (or {\emph{ $1$-oriented}) \emph{homotopy equivalence} if there exist $g \co Y \to X$ and homotopies $\phi \co \id_X \sim g \cc f$, $\psi \co \id_Y \sim f \cc g$.
The notion of a strong homotopy equivalence, defined below, is obtained by requiring an additional condition on the homotopies $\phi$ and $\psi$.

\begin{definition} \label{def:strhe}
For $k \in \braces{0\,, 1}$, a map $f \co X \to Y$ in $\cal{E}$ is called a \emph{strong $k$-oriented homotopy equivalence} if there is a map $g$ and homotopies $\phi$ and $\psi$ making $f$ into a $k$-oriented homotopy equivalence such that the following diagram commutes:
\begin{equation} \label{strength}
\begin{gathered}
\xymatrix{
  \interval \otimes X
  \ar[r]^{\interval \otimes f}
  \ar[d]_{\phi}
&
  \interval \otimes Y
  \ar[d]^{\psi}
\\
  X
  \ar[r]_{f}
&
  Y
\rlap{,}}
\end{gathered}
\end{equation}

\end{definition}

We write $S_k$ for the class of strong $k$-oriented homotopy equivalences and let~$S \defeq S_0 \cup S_1$.
Elements of $S$ are called \emph{strong homotopy equivalences}.

\begin{remark} \label{thm:endpoint-are-she}
The components of the endpoint inclusion $\kcyl \otimes (-) \co \Id_\cal{E} \to \interval \otimes (-)$ are strong $k$-oriented homotopy equivalences.
In fact, they are strong $k$-oriented deformation retracts, \ie strong $k$-oriented homotopy equivalences for which the homotopy~$\phi$ is trivial, \ie $\phi = \varepsilon \otimes X$, where $\varepsilon$ is the contraction of the functorial cylinder.
To show this, one exploits crucially the assumption that the functorial cylinder has connections.
For $k = 0$, the retraction is given by~$\varepsilon \otimes X \co \interval \otimes X \to X$ and the homotopy $\psi \co (\lcyl \otimes X) \circ (\varepsilon \otimes X) \sim 1_{\interval \otimes X}$ is given by the connection~$c^0 \otimes X$.
The correctness of the left and right endpoints of $\psi$ follow from the left sides of~\eqref{connections:0} and~\eqref{connections:1}, respectively.
The axiom for strength follows from the right part of~\eqref{connections:0}.
The case $k = 1$ is similar.
\end{remark}

Let us give an alternative characterization of strong $k$-oriented homotopy equivalences.
For this, observe that, for $k \in \braces{0\,, 1}$, we have the diagram
\begin{equation} \label{equ:thetak}
\begin{gathered}
\xymatrix@C+2em{
  X
  \ar[r]^-{\kcylinv \otimes X}
  \ar[d]_{f}
&
 \interval \otimes X
 \ar[r]^-{\iota_0}
 &
  (\interval \otimes X) +_X Y \ar[d]^{\kcyl \hatotimes f}
\\
  Y
  \ar[rr]_-{\kcylinv \otimes Y}
&&
  \interval \otimes Y
\rlap{,}}
\end{gathered}
\end{equation}
which commutes by the naturality of $\kcylinv \otimes (-)$ since $(\kcyl \hatotimes f) \cc \iota_0 = \interval \otimes f$.
This diagram gives us (naturally in $f$) a map~$\thetak \hatotimes f \co f \to \kcyl \hatotimes f$ in~$\cal{E}^\to$, which we use in the next lemma to provide a characterization of strong homotopy equivalences as retractions.

\begin{lemma} \label{strong-h-equiv-as-section-non-alg}
A map $f \co X \to Y$ is a strong $k$-oriented homotopy equivalence if and only if the map $\thetak \hatotimes f \co f \to \kcyl \hatotimes f$ exhibits $f$ as a retract of $\kcyl \hatotimes f$, \ie there are dotted arrows as follows:
\[
\xymatrix@C+4em{
  X
  \ar[r]^-{\iota_0 \cc (\kcylinv \otimes X)}
  \ar[d]_f
&
  (\interval \otimes X) +_X Y
  \ar[d]^{\kcyl \hatotimes f}
  \ar@{.>}[r]
&
  X
  \ar[d]^f
\\
  Y
  \ar[r]_-{\kcylinv \otimes Y}
&
  \interval \otimes Y
  \ar@{.>}[r]
&
  Y
\rlap{,}}
\]
such that the two horizontal composites are identities.
\end{lemma}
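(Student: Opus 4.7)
The plan is to unwind the retraction diagram via the universal property of the pushout defining $(\interval \otimes X) +_X Y$, and observe that the resulting data and equations match precisely the data defining a strong $k$-oriented homotopy equivalence. Since the correspondence will be a bijection on data (not just on existence), both directions of the biconditional come out of the same argument.

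First, I would fix notation for what a retraction of $f$ by $\thetak \hatotimes f$ amounts to. Writing $r \co (\interval \otimes X) +_X Y \to X$ and $s \co \interval \otimes Y \to Y$ for the two dotted arrows, the universal property of the pushout in~\eqref{delta-otimes-definition} identifies $r$ with a pair $(\phi, g)$, where $\phi \co \interval \otimes X \to X$, $g \co Y \to X$, and $\phi \cc (\delta^k \otimes X) = g \cc f$ (the compatibility on $X$). The map $s$ is just a map $\psi \co \interval \otimes Y \to Y$.

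Next I would read off the three commutation conditions. The requirement that the top row compose to $\id_X$ says $\phi \cc (\delta^{1-k} \otimes X) = \id_X$; combined with the pushout compatibility $\phi \cc (\delta^k \otimes X) = g \cc f$, this makes $\phi$ exactly a $k$-oriented homotopy between $g \cc f$ and $\id_X$ in the sense of~\eqref{equ:homotopy}. The requirement that the bottom row compose to $\id_Y$ says $\psi \cc (\delta^{1-k} \otimes Y) = \id_Y$. Finally, the right-hand square of the retract is $f \cc r = s \cc (\kcyl \hatotimes f)$; precomposing with the coprojection $Y \to (\interval \otimes X) +_X Y$ yields $f \cc g = \psi \cc (\delta^k \otimes Y)$, so $\psi$ is a $k$-oriented homotopy between $f \cc g$ and $\id_Y$, while precomposing with $\iota_0 \co \interval \otimes X \to (\interval \otimes X) +_X Y$ (using $(\kcyl \hatotimes f) \cc \iota_0 = \interval \otimes f$) yields $\psi \cc (\interval \otimes f) = f \cc \phi$, which is exactly the strength condition~\eqref{strength}.

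Thus the data $(r, s)$ of a retraction of $f$ by $\thetak \hatotimes f$ corresponds bijectively to the data $(g, \phi, \psi)$ exhibiting $f$ as a strong $k$-oriented homotopy equivalence; existence of one is equivalent to existence of the other, as claimed. The argument is entirely formal, the only subtlety being careful bookkeeping of which endpoint $\delta^k$ versus $\delta^{1-k}$ contributes which piece of data, so I anticipate no real obstacle beyond this bookkeeping.
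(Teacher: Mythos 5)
Your proposal is correct and follows essentially the same route as the paper's proof: unwind the retraction data through the universal property of the pushout defining $(\interval \otimes X) +_X Y$, obtaining $(g, \phi, \psi)$, with the pushout compatibility and the right-hand square giving endpoint $k$ and strength, and the identity composites giving the $1-k$ endpoints. The only difference is that you spell out the diagram chase the paper labels as "standard", so nothing further is needed.
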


\begin{proof}
First, by a standard diagram-chasing argument, giving the square on the right is equivalent to giving maps $\phi \co \interval \otimes X \to X$, $g \co Y \to X$, $\psi \co \interval \otimes Y \to Y$ such that the following diagrams commute:
\begin{align} \label{equ:first-three}
\begin{gathered}
\xymatrix{
  X
  \ar[r]^-{\kcyl \otimes X}
  \ar[d]_f
&
  \interval \otimes X
  \ar[d]^{\phi}
\\
  Y \ar[r]_-{g}
&
  X
\rlap{,}}
\end{gathered}
&&
\begin{gathered}
\xymatrix{
  Y
  \ar[r]^-g
  \ar[d]_{\kcyl \otimes Y}
&
  X
  \ar[d]^f
\\
  \interval \otimes Y
  \ar[r]_-{\psi}
&
  Y
\rlap{,}}
\end{gathered}
&&
\begin{gathered}
\xymatrix{
  \interval \otimes X
  \ar[r]^-\phi
  \ar[d]_{I \otimes f}
&
  X
  \ar[d]^{f}
\\
  \interval \otimes Y
  \ar[r]_-{\psi}
&
  Y
\rlap{.}}
\end{gathered}
\end{align}
Second, requiring that the two horizontal composites are identities means that the diagrams
\begin{align} \label{equ:second-two}
\begin{gathered}
\xymatrix@C+2em{
  X
  \ar[r]^-{\kcylinv \otimes X}
  \ar@{=}[dr]
&
  \interval \otimes X
  \ar[d]^\phi
\\&
  X
\rlap{,}}
\end{gathered}
&&
\begin{gathered}
\xymatrix@C+2em{
  Y
  \ar[r]^-{\kcylinv \otimes Y}
  \ar@{=}[dr]
&
  \interval \otimes Y
  \ar[d]^{\psi}
\\&
  Y
}
\end{gathered}
\end{align}
commute.
With reference to the diagrams~\eqref{equ:homotopy} and~\eqref{strength}, the equations in~\eqref{equ:first-three} provide endpoint~$k$ for~$\phi$ and~$\psi$ as well as strength, respectively, while the equations in~\eqref{equ:second-two} provide endpoints~$1-k$ for~$\phi$ and~$\psi$, respectively.
\end{proof}

Note that strong $k$-oriented homotopy equivalences have better closure properties than strong $k$-oriented deformation retracts.
For example, \cref{strong-h-equiv-as-section-non-alg} implies that strong $k$-oriented homotopy equivalences are closed under retracts.
Indeed, let $f$ be a strong homotopy equivalence and~$g$ be a retract of~$f$.
Then $\thetak \hatotimes g$ is a retract of $\thetak \hatotimes f$ since $\thetak \hatotimes (-) \co \cal{E}^\to \to (\cal{E}^\to)^\to$ is a functor and functors preserve retracts.
Since~$f$ is a strong homotopy equivalence, $\thetak \hatotimes f$ is a section.
But then $\thetak \hatotimes g$ is also a section (since sections are closed under retracts), and so $g$ is a strong $k$-oriented homotopy equivalence.

\begin{lemma} \label{thm:kcylf-is-she}
For every map $h \co X \to Y$ in $\cal{E}$, we have that $\kcyl \hatotimes h$ is a strong $k$-oriented homotopy equivalence.
\end{lemma}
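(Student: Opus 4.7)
My plan is to exhibit explicitly the data making $\kcyl \hatotimes h$ into a strong $k$-oriented homotopy equivalence, transporting the strong deformation retract structure on $\kcyl \otimes Y$ from \cref{thm:endpoint-are-she} along the pushout defining the Leibniz product. I will treat $k = 0$; the case $k = 1$ is dual. Write $P$ for the pushout $(\interval \otimes X) +_X Y$ with canonical injections $j \co \interval \otimes X \to P$ and $\iota \co Y \to P$, so that $(\lcyl \hatotimes h) \cc j = \interval \otimes h$ and $(\lcyl \hatotimes h) \cc \iota = \lcyl \otimes Y$.

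For the quasi-inverse, I would take $g \defeq \iota \cc (\ccyl \otimes Y) \co \interval \otimes Y \to P$, and for the homotopy on the codomain, $\psi \defeq c^0 \otimes Y$. For the homotopy $\phi \co \interval \otimes P \to P$ on the domain, I would use that $\interval \otimes (-)$ preserves the defining pushout of $P$ (being a left adjoint), presenting $\interval \otimes P$ as the pushout of $\interval \otimes \lcyl \otimes X$ along $\interval \otimes h$, and then define $\phi$ componentwise: by $j \cc (c^0 \otimes X)$ on the $\interval \otimes \interval \otimes X$ component and by $\iota \cc (\ccyl \otimes Y)$ on the $\interval \otimes Y$ component. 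The compatibility of the two components will follow by combining the right diagram of \eqref{connections:0} with the naturality of $\ccyl \otimes (-)$ and the pushout relation $j \cc (\lcyl \otimes X) = \iota \cc h$.

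The remaining verifications --- the two endpoint equations for $\phi$, the two endpoint equations for $\psi$, and the strength equation $(\lcyl \hatotimes h) \cc \phi = \psi \cc (\interval \otimes (\lcyl \hatotimes h))$ --- will be routine bookkeeping on the two pushout components of $\interval \otimes P$, at each step reducing to a case of \eqref{connections:0} or \eqref{connections:1}, to the contraction axiom \eqref{contractions}, or to the naturality of $\ccyl \otimes (-)$ or $c^0 \otimes (-)$ against $h$. The main obstacle, modest as it is, will be organizing these case distinctions cleanly; there is no conceptual difficulty.
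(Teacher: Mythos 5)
Your proposal is correct and matches the paper's own proof essentially verbatim: the paper also takes $g = \iota \cc (\ccyl \otimes Y)$, $\psi = c^0 \otimes Y$, and defines $\phi$ on the pushout decomposition of $\interval \otimes P$ exactly as you do (writing it as $(c^0 \otimes X) +_{\ccyl \otimes X} (\ccyl \otimes Y)$), leaving the endpoint and strength checks to the same routine use of \eqref{contractions}, \eqref{connections:0}, \eqref{connections:1} and naturality. The only cosmetic remark is that the case $k=1$ is analogous (swapping $c^0$ for $c^1$ and the endpoints) rather than literally dual.
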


\begin{proof}
This is a diagram-chasing argument, which we outline in the case $k = 0$.
Let us abbreviate $f \defeq \delta^0 \hatotimes h \co (I \otimes X) +_X Y \to I \otimes Y$.
We define $g$ as the composite
\[
\xymatrix{
  I \otimes Y
  \ar[r]^-{\epsilon \otimes Y}
&
  Y
  \ar[r]^-{\iota_1}
&
  (I \otimes X) +_X Y
\rlap{.}}
\]
It remains to define homotopies $\phi \co g \cc f \sim \id_X$ and $\psi \co f \cc g \sim \id_Y$ such that $f \cc \phi = \psi \cc (I \otimes f)$.
The homotopy $\psi \co I \otimes I \otimes Y \to I \otimes Y$ is defined as the connection $\psi \defeq c^0 \otimes Y$.
The signature of the homotopy $\phi$ can be equivalently stated as
\[
\phi \co (I \otimes I \otimes X) +_{I \otimes X} I \otimes Y \to (I \otimes X) +_X Y
\]
since $I \otimes (-)$ preserves pushouts.
We then define $\phi = (c^0 \otimes X) +_{\epsilon \otimes X} (\epsilon \otimes Y)$.
Standard reasoning, using diagrams~\eqref{contractions}, \eqref{connections:0}, \eqref{connections:1} and naturality, verifies that $\phi$ and $\psi$ have the correct endpoints and that the $0$-oriented homotopy equivalence $(f, g, \phi, \psi)$ is strong.
\end{proof}

\cref{thm:kcylf-is-she} follows also from the more general \cref{thm:kcylf-is-she-alg}, for which we provide a more conceptual proof.

For our second step, we return to consider a suitable wfs $(\Cof, \TrivFib)$ with generating set $\cal{I}$ and the induced wfs $(\TrivCof, \Fib)$ of \cref{thm:wfstimes}, with generating set $\cal{I}_\otimes$ defined in~\eqref{equ:cylinc}.
We focus on the class $\Cof \cap S$, \ie the class of cofibrations that are strong homotopy equivalences.
The next lemma relates them to the generating trivial cofibrations and to the trivial cofibrations.

\begin{lemma} \label{thm:main-sheretract}
We have
\begin{enumerate}[(i)]
\item $\cal{I}_\otimes \subseteq \Cof \cap S$,
\item $\Cof \cap S \subseteq \TrivCof$.
\end{enumerate}
\end{lemma}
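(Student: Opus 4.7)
My plan is to handle the two inclusions separately, with (i) being essentially a bookkeeping exercise and (ii) being the substantive step that combines the retract characterization of strong homotopy equivalences with \cref{thm:kcyl-of-cof-is-trivcof-non-alg}.

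For (i), fix $k \in \braces{0,1}$ and $i \in \cal{I}$; I must show $\kcyl \hatotimes i \in \Cof \cap S$. Membership in $\Cof$ is immediate from assumption (S4) of \cref{thm:suitable-wfs} applied to $i$ (since $\cal{I} \subseteq \Cof$, because $\cal{I}$ generates $(\Cof, \TrivFib)$ and the left class of a cofibrantly generated wfs contains its generators). Membership in $S$ follows directly from \cref{thm:kcylf-is-she} applied to $h \defeq i$, which gives that $\kcyl \hatotimes i$ is a strong $k$-oriented homotopy equivalence.

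For (ii), let $f \in \Cof \cap S$. Since $f \in S = S_0 \cup S_1$, there exists some $k \in \braces{0,1}$ such that $f$ is a strong $k$-oriented homotopy equivalence. By \cref{strong-h-equiv-as-section-non-alg}, the arrow-category map $\thetak \hatotimes f \co f \to \kcyl \hatotimes f$ exhibits $f$ as a retract of $\kcyl \hatotimes f$ in $\cal{E}^\to$. On the other hand, since $f \in \Cof$, \cref{thm:kcyl-of-cof-is-trivcof-non-alg} gives $\kcyl \hatotimes f \in \TrivCof$. The class $\TrivCof$ is the left class of the wfs $(\TrivCof, \Fib)$ of \cref{thm:wfstimes}, hence is closed under retracts in the arrow category. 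Therefore $f \in \TrivCof$.

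I do not anticipate any real obstacles here: both inclusions reduce to already-established facts (S4, \cref{thm:kcylf-is-she}, \cref{strong-h-equiv-as-section-non-alg}, \cref{thm:kcyl-of-cof-is-trivcof-non-alg}) and the standard closure of a left class under retracts. The only point worth verifying carefully is that the orientation $k$ of the strong homotopy equivalence structure on $f$ is the same $k$ used in forming the Leibniz product $\kcyl \hatotimes f$; this is automatic because both \cref{strong-h-equiv-as-section-non-alg} and \cref{thm:kcyl-of-cof-is-trivcof-non-alg} are stated uniformly in $k$, so whichever orientation witnesses $f \in S$ is the one to pick.
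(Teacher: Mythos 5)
Your proof is correct and follows essentially the same route as the paper: part (i) via condition (S4) together with Lemma \ref{thm:kcylf-is-she}, and part (ii) via the retract characterization in Lemma \ref{strong-h-equiv-as-section-non-alg}, Corollary \ref{thm:kcyl-of-cof-is-trivcof-non-alg}, and closure of $\TrivCof$ under retracts. The extra remark about matching the orientation $k$ is a fine (if implicit in the paper) point of care.
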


\begin{proof}
For part~(i), we have that $\cal{I}_\otimes \subseteq \Cof$ since $\cal{I} \subseteq \Cof$ and $\Cof$ is closed under Leibniz product with~$\kcyl$ (condition (S4) in \cref{thm:suitable-wfs}).
We also have $\cal{I}_\otimes \subseteq S$
since $\kcyl \hatotimes f$ is a strong $k$-oriented homotopy equivalence for every map $f$ by~\cref{thm:kcylf-is-she}.
For part~(ii), let~$f \in \Cof \cap S$.
Since $f \in S$, \cref{strong-h-equiv-as-section-non-alg} implies that $f$ is a retract of~$\kcyl \hatotimes f$ for some $k \in \braces{0\,, 1}$.
Because $f \in \Cof$, we have $\kcyl \hatotimes f \in \TrivCof$ by \cref{thm:kcyl-of-cof-is-trivcof-non-alg}.
The claim then follows since $\TrivCof$ is closed under retracts.
\end{proof}

\begin{remark} \label{fib-and-she}
By \cref{thm:main-sheretract}, we have that $\Fib = \liftr{(\Cof \cap S)}$, \ie a map is a fibration if and only if it has the right lifting property with respect to the cofibrations that are strong homotopy equivalences.
\end{remark}

The next lemma is the third main step of the proof of the Frobenius property for~$(\TrivCof, \Fib)$.

\begin{lemma} \label{thm:non-alg-frobenius-she}
For $k \in \braces{0\,, 1}$, the pullback of a strong $k$-oriented homotopy equivalence along a fibration is a strong $k$-oriented homotopy equivalence.
\end{lemma}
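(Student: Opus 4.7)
My plan is to unfold the strong $k$-oriented homotopy equivalence structure on $f$ and transport it across the pullback, using the fibration property of $p$ exactly once to lift the relevant homotopy on the base. Given a pullback square
\[
\xymatrix{
  X' \ar[r]^{q} \ar[d]_{f'} \pullback{dr} & X \ar[d]^{f} \\
  Y' \ar[r]_{p} & Y
\rlap{,}}
\]
with $p$ a fibration, suppose $f$ is a strong $k$-oriented homotopy equivalence via data $(g, \phi, \psi)$. I will construct $(g', \phi', \psi')$ witnessing that $f'$ is a strong $k$-oriented homotopy equivalence.

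The key step is producing a homotopy $\psi' \co \interval \otimes Y' \to Y'$ covering $\psi$, by solving the lifting problem
\[
\xymatrix@C+2em{
  Y' \ar[r]^-{\id_{Y'}} \ar[d]_{\delta^{1-k} \otimes Y'} & Y' \ar[d]^p \\
  \interval \otimes Y' \ar[r]_-{\psi \cc (\interval \otimes p)} \ar@{-->}[ur]|{\psi'} & Y
\rlap{.}}
\]
The outer square commutes because $\psi \cc (\delta^{1-k} \otimes Y) = \id_Y$. The left vertical map is the Leibniz product $\delta^{1-k} \hatotimes (\bot_{Y'} \co 0 \to Y')$, which lies in $\Cof_\otimes$ by~(S2); as observed after \cref{thm:char-fib-nonalg}, $p \in \Fib = \liftr{(\Cof_\otimes)}$, so a lift exists. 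The resulting $\psi'$ satisfies $\psi' \cc (\delta^{1-k} \otimes Y') = \id_{Y'}$ and $p \cc \psi' = \psi \cc (\interval \otimes p)$.

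With $\psi'$ in hand, I would use the universal property of $X' = Y' \times_Y X$ to define $g' \co Y' \to X'$ whose projections are $\psi' \cc (\delta^k \otimes Y')$ and $g \cc p$ (compatible because $p \cc \psi' \cc (\delta^k \otimes Y') = \psi \cc (\delta^k \otimes Y) \cc p = f \cc g \cc p$), and $\phi' \co \interval \otimes X' \to X'$ whose projections are $\psi' \cc (\interval \otimes f')$ and $\phi \cc (\interval \otimes q)$ (compatible by the strength axiom $f \cc \phi = \psi \cc (\interval \otimes f)$). Each of the four axioms for $(f', g', \phi', \psi')$ then reduces, by postcomposition with the two projections out of $X'$, to the analogous axiom for $(f, g, \phi, \psi)$ together with the defining equations of $\psi'$. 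The one subtle point, which motivates this particular choice of data, is that an arbitrary lift covering $\psi$ would not allow $\phi'$ to be built so as to preserve the strength axiom on the nose; the construction here works precisely because the first component of $\phi'$ is $\psi' \cc (\interval \otimes f')$, so that $f' \cc \phi' = \psi' \cc (\interval \otimes f')$ holds by definition.
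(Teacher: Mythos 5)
Your proof is correct and is essentially the paper's argument: the single nontrivial step in both is the same lifting problem, namely lifting $\psi \cc (\interval \otimes p)$ against $p$ along $\delta^{1-k} \otimes Y' \iso \delta^{1-k} \hatotimes \bot_{Y'}$, justified by (S2) together with $\Fib = \liftr{(\Cof_\otimes)}$ (the paper invokes \cref{thm:kcyl-of-cof-is-trivcof-non-alg} instead), after which everything else comes from the universal property of the pullback. The only difference is bookkeeping: the paper packages the data via the retract characterization of \cref{strong-h-equiv-as-section-non-alg} and the cartesianness of the comparison square, whereas you transport the explicit tuple $(g,\phi,\psi)$ componentwise, which amounts to the same verification.
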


\begin{proof}
Let $g \in S_k$, $p \in \Fib$, and consider a pullback diagram
\begin{equation} \label{non-alg-strong-h-equiv-uniform-base-change:0}
\begin{gathered}
\xymatrix{
  A
  \ar[d]_{\bar{g}}
  \ar[r]
  \pullback{dr}
&
  B \ar[d]^{g}
\\
  X
  \ar[r]_p
&
  Y
\rlap{.}}
\end{gathered}
\end{equation}
We wish to show that $\bar{g} \in S_k$.
Since $g \in S_k$, $\thetak \hatotimes g \co g \to \kcyl \hatotimes g$ has a retraction $\rho \co \kcyl \hatotimes g \to g$.
We show that $\thetak \hatotimes \bar{g} \co \bar{g} \to \kcyl \hatotimes \bar{g}$ has a retraction $\bar{\rho} \co \kcyl \hatotimes \bar{g} \to \bar{g}$.
We define $\bar{\rho}$ so that the diagram
\[
\xymatrix@C+2em{
\bar{g}
  \ar[r]^-{\thetak \hatotimes \bar{g}}
  \ar[d]_{\sigma}
&
  \kcyl \hatotimes \bar{g}
  \ar@{.>}[r]^-{\bar{\rho}}
  \ar[d]_{\kcyl \hatotimes \sigma}
&
 \bar{g}
  \ar[d]^{\sigma}
\\
  g
  \ar[r]_-{\thetak \hatotimes g}
&
  \kcyl \hatotimes g
  \ar[r]_-{\rho}
&
  g
}
\]
in $\cal{E}^\to$ commutes, where $\sigma \co \bar{g} \to g$ is the pullback in~\eqref{non-alg-strong-h-equiv-uniform-base-change:0} and the horizontal composites should be identities.
Since $\sigma$ is a cartesian arrow with respect to the codomain fibration, it suffices to solve this problem on codomains.
Again omitting horizontal composites, we need a dotted arrow in
\[
\xymatrix@C+2em{
  X
  \ar[r]^-{\kcylinv \otimes X}
  \ar[d]_{p}
&
  \interval \otimes X
  \ar@{.>}[r]^-{\cod(\bar{\rho})}
  \ar[d]^{\interval \otimes p}
&
  X
  \ar[d]^{p}
\\
  Y
  \ar[r]_-{\kcylinv \otimes Y}
&
  \interval \otimes Y
  \ar[r]_-{\cod(\rho)}
&
  Y
\rlap{.}}
\]
This is equivalent to finding a diagonal filler in the following square:
\[
\xymatrix@C+2em{
  X
  \ar@{=}[rr]
  \ar[d]_{\kcylinv \otimes X}
&&
  X
  \ar[d]^{p}
\\
  \interval \otimes X
  \ar[r]_-{I \otimes p}
  \ar@{.>}[urr]^(0.4){\cod(\bar{\rho})}
&
  \interval \otimes Y
  \ar[r]_-{\cod(\rho)}
&
  Y
\rlap{.}}
\]
But $\kcylinv \otimes X \iso \kcylinv \hatotimes \bot_X$, and $\bot_X \co 0 \to X$ is a cofibration by the assumption that $(\Cof, \TrivFib)$ is suitable (condition~(S2) of \cref{thm:suitable-wfs}).
By~\cref{thm:kcyl-of-cof-is-trivcof-non-alg}, we have that~$\kcylinv \hatotimes \bot_X \in \TrivCof$, and hence have a diagonal filler since~$p \in \Fib$.
\end{proof}

\begin{theorem} \label{thm:non-alg-frobenius}
The wfs $(\TrivCof, \Fib)$ has the Frobenius property.
\end{theorem}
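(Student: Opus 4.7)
The plan is to apply \cref{thm:frobenius-equivalence} to the wfs $(\TrivCof, \Fib)$, which is free on the set $\cal{I}_\otimes$ of generating trivial cofibrations. By the equivalence (i)$\Leftrightarrow$(iii) there, it suffices to show that the pullback of any map in $\cal{I}_\otimes$ along a fibration lies in $\TrivCof$.

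So fix a generating trivial cofibration $\kcyl \hatotimes i$ (with $i \in \cal{I}$, $k \in \braces{0,1}$), a fibration $p$, and a pullback square with $p$ on one side and $\kcyl \hatotimes i$ on the other; call the pulled-back map $j$. I want to show $j \in \TrivCof$. I will argue that $j \in \Cof \cap S$ and then invoke \cref{thm:main-sheretract}(ii), which gives $\Cof \cap S \subseteq \TrivCof$.

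For membership in $\Cof$: by \cref{thm:main-sheretract}(i) we have $\kcyl \hatotimes i \in \Cof$, and condition (S3) in \cref{thm:suitable-wfs} says that cofibrations are stable under pullback, so $j \in \Cof$. For membership in $S$: again by \cref{thm:main-sheretract}(i), $\kcyl \hatotimes i$ is a strong $k$-oriented homotopy equivalence, and \cref{thm:non-alg-frobenius-she} tells us that strong $k$-oriented homotopy equivalences are preserved by pullback along fibrations, so $j \in S_k \subseteq S$.

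This is almost mechanical given the preparatory lemmata; the real technical content sits in \cref{thm:main-sheretract} (which gives the sandwich $\cal{I}_\otimes \subseteq \Cof \cap S \subseteq \TrivCof$ and in particular uses \cref{strong-h-equiv-as-section-non-alg} together with \cref{thm:kcyl-of-cof-is-trivcof-non-alg} for the upper inclusion) and in \cref{thm:non-alg-frobenius-she} (whose proof uses condition (S2) to present $\kcylinv \otimes X$ as $\kcylinv \hatotimes \bot_X$ and then lifts against $p$). Thus there is no genuine obstacle remaining at this stage: once the auxiliary results are in hand, the theorem follows by assembling them through \cref{thm:frobenius-equivalence}(iii).
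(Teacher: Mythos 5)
Your proof is correct and follows exactly the paper's own argument: reduce via \cref{thm:frobenius-equivalence}(iii) to pulling back generating trivial cofibrations along fibrations, then combine \cref{thm:main-sheretract}(i), condition~(S3) of \cref{thm:suitable-wfs}, and \cref{thm:non-alg-frobenius-she} to land in $\Cof \cap S$, and finish with \cref{thm:main-sheretract}(ii). No discrepancies to report.
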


\begin{proof}
By~\cref{thm:frobenius-equivalence}, it suffices to show that, for a pullback of the form
\begin{equation*}
\begin{gathered}
\xymatrix{
  A
  \ar[d]_{j}
  \ar[r]
  \pullback{dr}
&
  B
  \ar[d]^i
\\
  X
  \ar[r]_{p}
&
  Y
\rlap{,}}
\end{gathered}
\end{equation*}
where $i \in \cal{I}_\otimes$ and $p \in \Fib$, we have $j \in \TrivCof$.
By part (i) of~\cref{thm:main-sheretract}, $i \in \Cof \cap S$.
By the closure of $\Cof$ under pullback (condition~(S3) of \cref{thm:suitable-wfs}) and~\cref{thm:non-alg-frobenius-she}, it follows that~$j \in \Cof \cap S$.
But $\Cof \cap S \subseteq \TrivCof$ by part~(ii) of~\cref{thm:main-sheretract}.
\end{proof}

\begin{example}
When applied to the category of simplicial sets (see \cref{thm:fib-is-kan}), \cref{thm:non-alg-frobenius} gives a new proof of right properness of the Quillen model structure for Kan complexes.
This proof avoids the use of topological realization, which is used in~\cite[Theorem~13.1.13]{hirschhorn-model-localizations} to deduce the desired result from the right properness of the model structure on topological spaces in which the fibrations are the Serre fibrations.
It also avoids the use of the theory of minimal fibrations, which is used in~\cite[Theorem~1.7.1]{joyal-tierney-notes} to establish the result working purely combinatorially.
\end{example}

\section{Categories of orthogonal maps}
\label{sec:ortf}

This section starts the second part of the paper, in which we are interested in generalizations of the results obtained earlier to algebraic weak factorization systems (awf's)~\cite{garner:small-object-argument,grandis-tholen-nwfs}.
In this section and the next, we will need to review and establish some facts about the algebraic setting that are useful for our development.
Importantly, in the algebraic setting we do not consider just classes of arrows in $\cal{E}$, but rather categories~$\cal{I}$, to be thought of as indexing categories, and functors~$u \co \cal{I} \to \cal{E}^\to$.
In the following, we write $u_i \co A_i \to B_i$ for the result of applying such a functor to~$i \in \cal{I}$.
Let us recall the following definition from~\cite{garner:small-object-argument}.

\begin{definition} \label{def:right-map}
Let $u \co \cal{I} \to \cal{E}^\to$.
\begin{enumerate}[(i)]
\item
A \emph{right $\cal{I}$-map} $(f, \phi) \co X \to Y$ consists of a map $f \co X \to Y$ in $\cal{E}$ and a function~$\phi$ that assigns to each $i \in \cal{I}$ and commuting square
\[
\xymatrix@C+0.5cm{
  A_i
  \ar[r]^{s}
  \ar[d]_{u_i}
&
  X
  \ar[d]^f
\\
  B_i
  \ar[r]_{t}
&
  Y
\rlap{,}}
\]
a diagonal filler $\phi(i,s, t) \co B_i \to X$, satisfying the following naturality condition: for every diagram of the form
\[
\xymatrix{
  A_i
  \ar[r]^{a}
  \ar[d]_{u_i}
&
  A_j
  \ar[r]^{s}
  \ar[d]_{u_j}
&
  X
  \ar[d]^f
\\
  B_i
  \ar[r]_{b}
&
  B_j
  \ar[r]_{t}
&
  Y
}
\]
where the left square is the image of $\sigma \co i \to j$ in $\cal{I}$ under $u$, we have that
\[
\phi(j, s, t) \cc b = \phi(i, s \cc a, t \cc b)
\, .\]
\item
A \emph{right $\cal{I}$-map morphism} $\alpha \co (f, \phi) \to (f', \phi')$ is a square $\alpha \co f \to f'$ in~$\cal{E}$ satisfying an evident compatibility condition with respect to the choices of diagonal fillers.
\end{enumerate}
\end{definition}

For $u \co \cal{I} \to \cal{E}^\to$, we write $\liftr{\cal{I}}$ for the category of right $\cal{I}$-maps and their morphisms.
We then let $\liftr{u} \co \liftr{\cal{I}} \to \cal{E}^\to$ be the evident forgetful functor, which we call the \emph{right orthogonal} of $u$.
Analogously, we can define the category $\liftl{\cal{I}}$ of left $\cal{I}$-maps and their morphisms as well as the forgetful functor $\liftl{u} \co \liftl{\cal{I}} \to \cal{E}^\to$, which we call the \emph{left orthogonal} of $u$.
As shown in~\cite[Proposition~3.8]{garner:small-object-argument}, the orthogonality operations
extend to functors forming an adjunction
\begin{equation} \label{garner-adjunction}
\begin{gathered}
\xymatrix@C+2em{
  \CAT_{/\cal{E}^\to}
  \ar@<5pt>[r]^-{\liftl{\brarghole}}
  \ar@{}[r]|-{\bot}
&
  \CAT_{/\cal{E}^\to}^{\op}
  \ar@<5pt>[l]^-{\liftr{\brarghole}}
\rlap{.}}
\end{gathered}
\end{equation}
Even if objects of $\CAT_{/\cal{E}^\to}$ are pairs of the form $(\cal{I}, u)$ where $\cal{I}$ is a category $u \co \cal{I} \to \cal{E}^\to$ and is a functor, in the following we shall often refer to them by the domain $\cal{I}$ of $u$.
A similar convention applies to maps in $\CAT_{/\cal{E}^\to}$.
For example, we write the components of the unit and counit simply as
\[
\eta_\cal{I} \co \cal{I} \to \liftl{(\liftr{\cal{I}})}
\,, \quad
\varepsilon_\cal{I} \co \cal{I} \to \liftr{(\liftl{\cal{I}})}
\,.\]
In the following, we will see that some familiar equalities between two classes of maps are replaced by functors back and forth between two categories over $\cal{E}^\to$, for which we now introduce some notation.
Given $u \co \cal{I} \to \cal{E}^\to$ and $v \co \cal{J} \to \cal{E}^\to$, we write
\begin{equation} \label{equ:logical-equivalence}
\cal{I} \leftrightarrow \cal{J}
\end{equation}
to mean that there are functors $F \co \cal{I} \to \cal{J}$ and $G \co \cal{J} \to \cal{I}$ over $\cal{E}^\to$, not necessarily forming an isomorphism or even an equivalence.

The rest of this section is devoted to establishing some facts regarding categories of orthogonal maps, describing the interplay between orthogonality functors and retract closure, slicing, adjunctions, Leibniz adjunctions, and Kan extensions.
These facts are expected generalizations of well-known statements for classes of weakly orthogonal classes in the standard setting.
Some of them follow from results in~\cite{bourke-garner-I}; the others are probably also known to experts, but we include them since they are used in the remainder of the paper and we could not find them in the literature.
We omit most proofs, which are straightforward.

Given a functor $u \co \cal{I} \to \calE^\to$, we define its~\emph{retract closure} $\overline{u} \co \overline{\cal{I}} \to \cal{E}^\to$ as follows.
An object of $\overline{\cal{I}}$ is a tuple~$(i, e, \sigma, \rho)$ consisting of an object $i \in \cal{I}$, an object $e \in \cal{E}^\to$, and maps $\sigma \co e \rightarrow u_i$, $\rho \co u_i \rightarrow e$ in $\cal{E}^\to$ which exhibit $e$ as a retract of $u_i$ in $\cal{E}^\to$, \ie such that
\[
\xymatrix{
  e
  \ar[r]^{\sigma}
  \ar@{=}[dr]
&
  u_i
  \ar[d]^{\rho}
\\&
  e
\rlap{.}}
\]
A map $(f, \kappa) \co (i, e, \sigma, \rho) \to (i', e', \sigma', \rho')$ in $\overline{\cal{I}}$ consists of a map $f \co i \to i'$ in $\cal{I}$ and a map $\kappa \co e \rightarrow e'$ in $\cal{E}^\to$ such that the following diagram in $\cal{E}^\to$ commutes:
\[
\xymatrix{
  e
  \ar[r]^{\sigma}
  \ar[d]_{\kappa}
&
  u_i
  \ar[r]^{\rho}
  \ar[d]^{u_f}
&
  e
  \ar[d]^{\kappa}
\\
  e'
  \ar[r]_{\sigma'}
&
  u_{i'}
  \ar[r]_{\rho'}
&
  e'
\rlap{.}}
\]
The functor $\overline{u} \co \overline{\cal{I}} \to \cal{E}^\to$ is then defined on objects by letting $\overline{u}(i, e, \sigma, \rho) \defeq e$, and on maps by letting $\overline{u}(f, \kappa) \defeq \kappa$.
The operation of retract closure gives a monad in an evident way.
The next proposition uses the notation introduced in~\eqref{equ:logical-equivalence}.

\begin{proposition} \label{retract-closure}
For every $u \co \cal{I} \to \cal{E}^\to$, we have pairs of functors
\[
\liftr{\parens*{\overline{\cal{I}}}} \leftrightarrow \liftr{\cal{I}}
\,, \quad
\liftl{\parens*{\overline{\cal{I}}}} \leftrightarrow \liftl{\cal{I}}
\]
over $\cal{E}^\to$.
\qed
\end{proposition}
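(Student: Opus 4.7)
The aim is to exhibit functors in both directions between $\liftr{\overline{\cal{I}}}$ and $\liftr{\cal{I}}$ (and similarly for the left orthogonal); this is weaker than an equivalence, matching the $\leftrightarrow$ in the statement.

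One direction is essentially automatic. There is a canonical functor $\iota \co \cal{I} \to \overline{\cal{I}}$ over $\cal{E}^\to$ sending $i$ to the trivial retract $(i, u_i, \id_{u_i}, \id_{u_i})$ and a morphism $\tau \co i \to j$ to the pair $(\tau, u_\tau)$. Applying the orthogonality operation from~\eqref{garner-adjunction}, which is contravariantly functorial in its argument, yields the desired functors $\liftr{\overline{\cal{I}}} \to \liftr{\cal{I}}$ and $\liftl{\overline{\cal{I}}} \to \liftl{\cal{I}}$ over $\cal{E}^\to$.

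The reverse direction I would construct by hand. For the right orthogonal, starting from a right $\cal{I}$-map $(f, \phi)$, I extend $\phi$ to $\overline{\cal{I}}$ by the formula
\[
\phi'((i, e, \sigma, \rho), s, t) \defeq \phi(i, s \rho_0, t \rho_1) \cc \sigma_1\,,
\]
where $\sigma = (\sigma_0, \sigma_1)$, $\rho = (\rho_0, \rho_1)$, and $(s, t)$ is a lifting problem of $e$ against $f$. That $\rho$ is a morphism in $\cal{E}^\to$ ensures $(s \rho_0, t \rho_1)$ is a genuine lifting problem of $u_i$ against $f$, and the retract identity $\rho \sigma = \id_e$ then shows $\phi'$ produces a valid filler. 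The construction for the left orthogonal is dual: lift the given lifting problem along $\sigma$ and post-compose with $\rho_0$.

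The main obstacle I anticipate is verifying the naturality axiom of \cref{def:right-map} for $\phi'$ with respect to morphisms $(g, \kappa) \co (i, e, \sigma, \rho) \to (j, e', \sigma', \rho')$ in $\overline{\cal{I}}$. Unfolding the formula on both sides, the required identity reduces, via the compatibility relations $u_g \sigma = \sigma' \kappa$ and $\kappa \rho = \rho' u_g$, to the naturality of the original $\phi$ along $g \co i \to j$ in $\cal{I}$; this is a direct but fiddly diagram chase. Functoriality in morphisms of right $\cal{I}$-maps, together with the observation that the construction leaves the underlying arrow in $\cal{E}$ unchanged, are immediate from the definitions.
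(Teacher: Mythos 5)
Your proposal is correct: the paper omits this proof as one of the ``straightforward'' verifications, and your argument is precisely the standard one it leaves implicit --- the functor $\liftr{\overline{\cal{I}}}\to\liftr{\cal{I}}$ induced by $i\mapsto(i,u_i,\id,\id)$ via contravariance of orthogonality, and the explicit retract formula $\phi'((i,e,\sigma,\rho),s,t)=\phi(i,s\rho_0,t\rho_1)\cc\sigma_1$ (dually for left maps), whose filler and naturality checks go through exactly as you indicate using $\rho\sigma=\id_e$, the morphism conditions $u_g\sigma=\sigma'\kappa$, $\kappa\rho=\rho'u_g$, and naturality of $\phi$ along $g$. Nothing is missing.
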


Given $u \co \cal{I} \to \cal{E}^\to$ and $X \in \cal{E}$, we define the \emph{slice category} $\cal{I}_{/X}$ and functor $u_{/X} \co \cal{I}_{/X} \to \cal{E}_{/X}^\to$ as follows.
The category $\cal{I}_{/X}$ has as objects pairs consisting of an object $i \in \cal{I}$ and a commutative triangle in $\cal{E}$ of the form
\[
\xymatrix@C-1em{
  A_i
  \ar[dr]
  \ar[rr]^{u_i}
&&
  B_i
  \ar[dl]
\\&
  X
\rlap{.}}
\]
The functor $u_{/X} \co \cal{I}_{/X} \to \cal{E}_{/X}^\to$ sends such a pair to $u_i \co A_i \to B_i$, viewed as a morphism in~$\cal{E}_{/X}$.
There is also a \emph{coslice} category under $X$, described dually, which we denote $u_{\backslash X} \co \cal{I}_{\backslash X} \to \cal{E}^\to$.
With these definitions in place, the commutation between slicing and orthogonality functors in the algebraic setting can be stated as follows.

\begin{proposition} \label{pitchfork-slicing}
Let $u \co \cal{I} \to \cal{E}^\to$ and $X \in \cal{E}$.
\begin{enumerate}[(i)]
\item The right orthogonality functor commutes with slicing, \ie we have $\liftr{(\cal{I}_{/X})} = (\liftr{\cal{I}})_{/X}$ as categories over $\cal{E}^\to$.
\item The left orthogonality functor commutes with coslicing, \ie we have $\liftl{(\cal{I}_{\backslash X})} = (\liftl{\cal{I}})_{\backslash X}$ as categories over $\cal{E}^\to$.
\qed
\end{enumerate}
\end{proposition}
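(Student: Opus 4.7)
The plan is to prove both parts by simple unfolding, verifying that the two categories really are equal on the nose (not merely equivalent) as categories over $\cal{E}^\to$. I treat part (i) in detail; part (ii) follows by a strictly dual argument, replacing each $X$-structure map $Y \to X$ by a map $X \to Y$ and reversing the induced squares.

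First I would unfold each side as follows. An object of $(\liftr{\cal{I}})_{/X}$ is a right $\cal{I}$-map $(f, \phi)\co Y \to Z$ together with a pair of maps $p \co Y \to X$, $q \co Z \to X$ satisfying $q \cc f = p$. An object of $\liftr{(\cal{I}_{/X})}$ is an arrow $f$ of $\cal{E}_{/X}$, \ie the same data $(f, p, q)$, together with a function $\phi'$ assigning, to every $\cal{I}_{/X}$-lifting problem against $f$ in $\cal{E}_{/X}$, a diagonal filler in $\cal{E}_{/X}$, naturally in $\cal{I}_{/X}$.

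The key observation is that, given $(f,p,q)$, $\cal{I}$-lifting problems against $f$ in $\cal{E}$ are in bijection with $\cal{I}_{/X}$-lifting problems against $f$ in $\cal{E}_{/X}$. Indeed, any commuting square $(s,t) \co u_i \to f$ in $\cal{E}$ induces canonical structure maps $p \cc s \co A_i \to X$ and $q \cc t \co B_i \to X$ on $u_i$ (compatibility $q \cc t \cc u_i = p \cc s$ is immediate from $t \cc u_i = f \cc s$ and $q \cc f = p$), making it a lifting problem in $\cal{E}_{/X}$; conversely every $\cal{E}_{/X}$-lifting problem forgets to such a square. Moreover, a diagonal filler $d\co B_i \to Y$ in $\cal{E}$ automatically satisfies $p \cc d = q \cc f \cc d = q \cc t$, \ie it respects the induced $X$-structure, so the sets of diagonal fillers agree. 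I would verify similarly that the naturality conditions match: any morphism $\sigma \co i \to j$ of $\cal{I}$ automatically becomes a morphism in $\cal{I}_{/X}$ for the $X$-structures induced by a given outer square $(s,t) \co u_j \to f$ and its precomposition with $u_\sigma$, so naturality in $\cal{I}$ and naturality in $\cal{I}_{/X}$ are literally the same family of equations.

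This shows that the objects of the two categories coincide; the morphisms on each side are squares in $\cal{E}$ respecting the $X$-structure and compatible with the chosen fillers, so they also coincide. Both forgetful functors to $\cal{E}^\to$ (via $\cal{E}_{/X}^\to \to \cal{E}^\to$ on the left, via the evident projection on the right) send an object to the underlying $f \co Y \to Z$, so the equality holds over $\cal{E}^\to$. I do not anticipate a genuine obstacle: the only care required is to keep the two levels of slicing straight and to observe that the $X$-structure on the source of any lifting problem is \emph{determined} rather than freely chosen, which is what forces the strict equality.
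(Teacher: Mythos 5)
Your proposal is correct and is exactly the argument the paper has in mind: the paper omits the proof as a straightforward unfolding, and your verification — that the $X$-structure on the source of any lifting problem is forced, that fillers automatically respect it, and that the naturality conditions over $\cal{I}$ and over $\cal{I}_{/X}$ coincide — is precisely that intended unfolding, with part (ii) dual. No gaps.
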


In contrast to the right orthogonality functor, the left orthogonality functor does not commute with slicing in general.
However, it does under certain assumptions, as described in \cref{pitchfork-slicing-grothendieck} below.

\begin{proposition} \label{pitchfork-slicing-grothendieck}
Let $u \co \cal{I} \to \cal{E}^\to$ and assume that
\[
\xymatrix@C-1em{
  \cal{I}
  \ar[rr]^{u}
  \ar[dr]_{\cod_{\cal{E}} {}\cc{} u}
&&
  \cal{E}^\to
  \ar[dl]^{\cod_{\cal{E}}}
\\&
  \cal{E}
}
\]
is a morphism of Grothendieck fibrations.
Then the left orthogonality functor commutes with slicing on $\cal{I}$, \ie for $X \in \cal{E}$ we have $\liftl{(\cal{I}_{/X})} = (\liftl{\cal{I}})_{/X}$.
\end{proposition}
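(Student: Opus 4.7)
The plan is to exhibit the obvious restriction functor
\[
R \co (\liftl{\cal{I}})_{/X} \to \liftl{(\cal{I}_{/X})}
\]
over $\cal{E}^\to$ as an isomorphism of categories, by constructing an inverse $E$ using the fibrational hypothesis. The functor $R$ itself is immediate and requires no extra hypothesis: given a left $\cal{I}$-map $(f, \phi)$ with $f$ over $X$ and $i \in \cal{I}_{/X}$, a lifting problem in $\cal{E}_{/X}$ against $u_i$ is in particular a lifting problem in $\cal{E}$ (forgetting the over-$X$ structure on $i$), and the filler supplied by $\phi$ lies over $X$ automatically because all other edges of the square do.

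For the inverse $E \co \liftl{(\cal{I}_{/X})} \to (\liftl{\cal{I}})_{/X}$, fix a cleavage of $u$. Given a left $\cal{I}_{/X}$-map $(f, \phi')$ with $f \co C \to D$ sliced over $\pi \co D \to X$, and any lifting problem
\[
\xymatrix@C+0.5cm{
  C \ar[r]^s \ar[d]_f & A_i \ar[d]^{u_i} \\
  D \ar[r]_t & B_i
}
\]
in $\cal{E}$ against $u_i$ for some $i \in \cal{I}$, the Grothendieck hypothesis provides a cartesian lift $\tau \co t^\ast i \to i$ in $\cal{I}$ above $t$, with image $u_\tau$ the pullback square
\[
\xymatrix@C+0.5cm{
  A_i \times_{B_i} D \ar[r]^-{u_\tau^A} \ar[d]_{u_{t^\ast i}} & A_i \ar[d]^{u_i} \\
  D \ar[r]_t & B_i
\rlap{.}}
\]
The object $t^\ast i$ is canonically in $\cal{I}_{/X}$ via $\pi$, and the universal property of the pullback factors $(s, t)$ through $u_\tau$ as a lifting problem $(s^\ast, \id_D)$ in $\cal{E}_{/X}$ against $u_{t^\ast i}$. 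Set $E(f, \phi')(i, s, t) \defeq u_\tau^A \cc \phi'(t^\ast i, s^\ast, \id_D)$.

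The remaining verifications --- naturality of $E(f, \phi')$ in $\cal{I}$, and that $E$ and $R$ are mutually inverse --- all follow from the universal property of cartesian arrows combined with the naturality of $\phi'$ (respectively $\phi$). For example, $E \cc R = \id$ because naturality of $\phi$ at $\tau$ yields $u_\tau^A \cc \phi(t^\ast i, s^\ast, \id_D) = \phi(i, u_\tau^A \cc s^\ast, t) = \phi(i, s, t)$; the identity $R \cc E = \id$ is similar, noting that when $(s, t)$ is already in $\cal{E}_{/X}$, the cartesian arrow $\tau$ is over the over-$X$ map $t$ and hence lives in $\cal{I}_{/X}$. The main obstacle is the bookkeeping needed for naturality in $\cal{I}$: given $\sigma \co i \to j$ in $\cal{I}$ with $u_\sigma = (a, b)$, one uses cartesianness of $(b \cc t)^\ast j \to j$ to produce a canonical morphism $\tilde{\sigma} \co t^\ast i \to (b \cc t)^\ast j$ in $\cal{I}$ above $\id_D$, which therefore lies in $\cal{I}_{/X}$, and the required equation reduces to naturality of $\phi'$ at $\tilde{\sigma}$ together with the identity $u_{\tilde{\sigma}}^A \cc s^\ast = (a \cc s)^\ast$ that expresses the uniqueness half of the cartesian universal property.
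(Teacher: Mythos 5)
Your proposal is correct and takes essentially the same route as the paper: you use cartesian lifts of the Grothendieck fibration $\cod_{\cal{E}} \cc u$ to base-change an arbitrary lifting problem to one whose bottom arrow is an identity (where the slice structure over $X$ is invisible), with naturality of the chosen fillers ensuring nothing is lost. The paper packages this as the observation that the ``restricted'' orthogonality construction built from identity-bottom lifting problems visibly commutes with slicing, while you package the same mechanism as an explicit inverse $E$ to the restriction functor $R$; the substance is identical.
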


\begin{proof}
First note that the composite $\cod_{\cal{E}_{/X}} {}\mathrel{\cc}{} u_{/X}$ is also a Grothendieck fibration.
When constructing the category of left maps for $\cal{I}$ or $\cal{I}_{/X}$, an application of base change and naturality of diagonal fillers shows that it is sufficient to consider lifting problems with the bottom arrow an identity.
But this restricted left orthogonality functor evidently commutes with slicing.
\end{proof}

\begin{corollary} \label{pitchfork-slicing-monad} \leavevmode
\begin{enumerate}[(i)]
\item The monad $\liftl{(\liftr{(-)})}$ commutes with slicing.
\item The monad $\liftr{(\liftl{(-)})}$ commutes with coslicing.
\end{enumerate}
\end{corollary}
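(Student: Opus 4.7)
The plan is to derive each part by combining the corresponding case of \cref{pitchfork-slicing} with \cref{pitchfork-slicing-grothendieck} (or its evident dual), now applied to the orthogonality functors themselves.

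For part~(i), I would first invoke \cref{pitchfork-slicing}(i) to obtain the identification $\liftr{(\cal{I}_{/X})} = (\liftr{\cal{I}})_{/X}$ over $\cal{E}^\to$, reducing the problem to showing that the left orthogonality functor commutes with slicing when applied to $\liftr{\cal{I}}$. I would then invoke \cref{pitchfork-slicing-grothendieck} applied to $\liftr{u}$, whose hypothesis asks that $\cod \cc \liftr{u}\co \liftr{\cal{I}} \to \cal{E}$ be a Grothendieck fibration with $\liftr{u}$ preserving cartesian lifts. The content of this hypothesis is the familiar stability of right $\cal{I}$-maps under pullback along the codomain: given $(f, \phi)\co X \to Y$ in $\liftr{\cal{I}}$ and $g\co Y' \to Y$, I would equip the pullback $f'\co X \times_Y Y' \to Y'$ with a right $\cal{I}$-map structure by transporting each lifting problem along $g$, applying $\phi$ to obtain a filler into $X$, and then pairing it with the bottom arrow via the universal property of the pullback. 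The resulting morphism in $\liftr{\cal{I}}$ sits above a genuine pullback square in $\cal{E}^\to$ and is therefore cartesian.

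For part~(ii), my plan is strictly dual: first invoke \cref{pitchfork-slicing}(ii) to obtain $\liftl{(\cal{I}_{\backslash X})} = (\liftl{\cal{I}})_{\backslash X}$, and then apply the evident dual of \cref{pitchfork-slicing-grothendieck}, asserting that the right orthogonality functor commutes with coslicing on any $v\co \cal{J} \to \cal{E}^\to$ for which $\dom \cc v$ is a morphism of Grothendieck opfibrations (viewing $\dom\co \cal{E}^\to \to \cal{E}$ as an opfibration via pushout squares). Applied to $v = \liftl{u}$, this hypothesis amounts to the dual statement that left $\cal{I}$-maps admit pushouts along arrows in their domain, with the pushout inheriting a natural family of fillers from the original via the universal property of the pushout.

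I do not expect a serious obstacle here. The only point requiring care is that the transported family of fillers remains natural in $i \in \cal{I}$, but this is a routine diagram chase combining the naturality of $\phi$ (respectively $\psi$) with the functoriality of pullback (respectively pushout). It is worth flagging that part~(ii) relies on the dual of \cref{pitchfork-slicing-grothendieck}, which is not stated explicitly in the paper but follows by a completely formal dualization of its proof.
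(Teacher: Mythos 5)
Your proposal is correct and takes essentially the same route as the paper: the paper's proof is exactly the combination of \cref{pitchfork-slicing} with \cref{pitchfork-slicing-grothendieck}, noting that categories of right maps satisfy the Grothendieck-fibration hypothesis (which you verify explicitly via stability of right $\cal{I}$-maps under pullback), with part~(ii) obtained by dualization. The extra details you supply — the transported fillers and the unstated dual of \cref{pitchfork-slicing-grothendieck} — are precisely what the paper leaves implicit.
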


\begin{proof}
Use \cref{pitchfork-slicing} and note, for the first statement, that categories of right maps satisfy the assumptions of \cref{pitchfork-slicing-grothendieck}.
The second statement follows dually.
\end{proof}

\begin{proposition}
The retract closure commutes with slicing and coslicing, in the sense that for every $u \co \cal{I} \to \cal{E}^\to$, we have $\overline{\cal{I}_{/X}} = \overline{\cal{I}}_{/X}$ and $\overline{\cal{I}_{\backslash X}} = \overline{\cal{I}}_{\backslash X}$ as categories over $\cal{E}^\to$.
\qed
\end{proposition}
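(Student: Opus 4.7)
The plan is to prove the two equalities by unpacking the definitions and exhibiting, in each case, a canonical isomorphism of categories over $\cal{E}^\to$. Only the slice version is treated; the coslice version is entirely dual.

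Unfolding, an object of $\overline{\cal{I}_{/X}}$ consists of $i \in \cal{I}$ together with maps $\alpha_i \co A_i \to X$ and $\beta_i \co B_i \to X$ satisfying $\beta_i \cc u_i = \alpha_i$, an arrow $e \co A' \to B'$ together with maps $\alpha \co A' \to X$ and $\beta \co B' \to X$ satisfying $\beta \cc e = \alpha$, and squares $\sigma \co e \to u_i$ and $\rho \co u_i \to e$ in $\cal{E}^\to$ with $\rho \cc \sigma = \id_e$, all of whose components over $X$ are respected (so that $\sigma$ and $\rho$ live in $\cal{E}_{/X}^\to$). An object of $\overline{\cal{I}}_{/X}$ consists of a retract datum $(i, e, \sigma, \rho)$ in $\overline{\cal{I}}$ plus maps $\alpha \co A' \to X$, $\beta \co B' \to X$ exhibiting $e$ as an object of $\cal{E}_{/X}^\to$. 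Thus the second description is exactly the first one with the structure maps $(\alpha_i, \beta_i)$ on $u_i$ omitted.

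The main step is to observe that this omitted data is canonically recoverable. Writing $\rho_0 \co A_i \to A'$ and $\rho_1 \co B_i \to B'$ for the domain and codomain components of $\rho$, I would define, for an object of $\overline{\cal{I}}_{/X}$, the maps $\alpha_i \defeq \alpha \cc \rho_0$ and $\beta_i \defeq \beta \cc \rho_1$. A one-line diagram chase using $\rho_1 \cc u_i = e \cc \rho_0$ and $\beta \cc e = \alpha$ shows $\beta_i \cc u_i = \alpha_i$, and the equations $\rho \cc \sigma = \id_e$ and $\beta \cc e = \alpha$ show that with this choice $\sigma$ and $\rho$ are indeed morphisms over $X$. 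This yields a functor $F \co \overline{\cal{I}}_{/X} \to \overline{\cal{I}_{/X}}$, while the evident forgetful functor $G$ goes the other way. The composite $G \cc F$ is an identity by construction, and $F \cc G$ is an identity because in any object of $\overline{\cal{I}_{/X}}$, the requirement that $\rho$ be a square over $X$ forces exactly the equations $\alpha_i = \alpha \cc \rho_0$ and $\beta_i = \beta \cc \rho_1$.

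For morphisms one checks, routinely, that a morphism $(f, \kappa)$ in $\overline{\cal{I}_{/X}}$ has all the over-$X$ compatibilities for $u_f$ automatically implied by those for $\kappa$ and $\rho$, which is again a brief diagram chase. Both functors are then visibly compatible with the forgetful functors down to $\cal{E}^\to$, proving the equality as categories over $\cal{E}^\to$. The coslice statement is obtained by dualizing, replacing $\rho$ throughout by the section $\sigma \co e \to u_i$ and transporting structure maps under $X$ along its components. The only place where a genuine check is required is the verification that transported structure is consistent with the retract being over $X$, which I expect to be the main (if modest) obstacle, and which reduces in each case to using one of $\rho \cc \sigma = \id$ or the square equations.
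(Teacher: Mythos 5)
Your proposal is correct, and it takes the approach the paper intends: the paper omits this proof as one of the routine verifications, and your unfolding --- transporting the over-$X$ (resp.\ under-$X$) structure from $e$ to $u_i$ along the retraction $\rho$ (resp.\ the section $\sigma$), checking via $\rho \cc \sigma = \id$ that $\sigma$ and $\rho$ are then maps over (under) $X$, and noting that this structure is forced so the two descriptions coincide, with the compatibility of $u_f$ implied by that of $\kappa$ --- is exactly the straightforward diagram chase being elided. Nothing further is needed.
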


\cref{lift-of-adjunction,lift-of-mates} below are implied by~\cite[Proposition~21]{bourke-garner-I} with double categories of squares specialized to categories of arrows.
\cref{lift-of-adjunction} appears also in the form of the pullback square~(6.3) in \cite{riehl-monoidal-ams}.

\begin{proposition} \label{lift-of-adjunction}
Let $u \co \cal{I} \to \cal{E}^\to$ and $v \co \cal{J} \to \cal{F}^\to$.
For an adjunction $F \dashv G$ between $\cal{E}$ and~$\cal{F}$, the following are equivalent, naturally in $\cal{I}$ and $\cal{J}$:
\begin{enumerate}[(i)]
\item the functor $F \co \cal{E}^\to \to \cal{F}^\to$ lifts to a functor $\widetilde{F} \co \cal{I} \to \liftl{\cal{J}}$ making the following diagram commute:
\[
\xymatrix@C=1.2cm{
  \cal{I}
  \ar[r]^{\widetilde{F}}
  \ar[d]_{u}
&
  \liftl{\cal{J}}
  \ar[d]^{\liftl{v}}
\\
  \cal{E}^\to
  \ar[r]_-{F}
&
  \cal{F}^\to
\rlap{,}}
\]
\item the functor $G \co \cal{F}^\to \to \cal{E}^\to$ lifts to a functor $\widetilde{G} \co \cal{J} \to \liftr{\cal{I}}$ making the following diagram commute:
\[
\xymatrix@C=1.2cm{
  \cal{J}
  \ar[d]_{v}
  \ar[r]^{\widetilde{G}}
&
  \liftr{\cal{I}}
  \ar[d]^{\liftr{u}}
\\
  \cal{F}^\to
  \ar[r]_{G}
&
  \cal{E}^\to
\rlap{.}}
\]
\end{enumerate}
\qed
\end{proposition}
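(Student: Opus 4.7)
The plan is to exploit the basic fact that an adjunction $F \dashv G$ between $\cal{E}$ and $\cal{F}$ lifts to an adjunction $F^\to \dashv G^\to$ between $\cal{E}^\to$ and $\cal{F}^\to$: given a commuting square
\[
\xymatrix@C=1.2cm{
  F A_i \ar[r]^-{s} \ar[d]_{F u_i} & C_j \ar[d]^{v_j} \\
  F B_i \ar[r]_-{t} & D_j
}
\]
in $\cal{F}$, the mate construction produces a square in $\cal{E}$ with top $s^\sharp \co A_i \to GC_j$ and bottom $t^\sharp \co B_i \to GD_j$ exhibiting a morphism $u_i \to Gv_j$ in $\cal{E}^\to$, and this assignment is a bijection. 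Moreover, a candidate diagonal filler $\phi \co F B_i \to C_j$ for the first square corresponds under the transpose bijection to a candidate filler $\phi^\sharp \co B_i \to GC_j$ for the second, with $\phi$ making the required triangles commute if and only if $\phi^\sharp$ does.

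Given this correspondence, I would unpack the two statements in parallel. The data $\widetilde F$ in (i) consists, for every $i \in \cal{I}$ and every $j \in \cal{J}$, of a choice of diagonal filler $\phi(i,j,s,t)$ for every square of the displayed form, natural in morphisms of $\cal{J}$ (this is the left $\cal{J}$-map structure on $F u_i$), and natural in morphisms of $\cal{I}$ (this is functoriality of $\widetilde F$). The data $\widetilde G$ in (ii) is the analogous choice of fillers $\psi(j,i,s',t')$ for every square $u_i \to Gv_j$, natural in morphisms of $\cal{I}$ (the right $\cal{I}$-map structure) and of $\cal{J}$ (functoriality of $\widetilde G$). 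I would then define $\widetilde F \leftrightarrow \widetilde G$ by $\psi(j,i,s^\sharp,t^\sharp) \defeq \phi(i,j,s,t)^\sharp$ and verify this is well-defined and mutually inverse using the adjoint transpose bijection.

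Next I would check that the four naturality conditions — the two for (i) and the two for (ii) — correspond under the transpose. This is the bulk of the bookkeeping: each naturality axiom is an equation between composites in $\cal{F}$ or $\cal{E}$, and applying $G$ or $F$ together with unit/counit triangle identities converts one into the other. The naturality of the equivalence in $\cal{I}$ and $\cal{J}$ claimed by the proposition follows by the same transposition applied uniformly.

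I do not expect a substantive obstacle; the entire argument reduces to the naturality of the adjunction bijection in both variables and its compatibility with composition. The only real work is disciplined tracking of which variable plays which role: in particular, a left $\cal{J}$-map structure requires naturality only in $\cal{J}$-morphisms (not $\cal{I}$), while a right $\cal{I}$-map structure requires naturality only in $\cal{I}$-morphisms, and under the transpose these swap sides of the adjunction correctly. Since this is precisely the specialization of \cite[Proposition~21]{bourke-garner-I} to categories of arrows (viewed as degenerate double categories of squares), I would also indicate that reference and omit most of the routine verifications.
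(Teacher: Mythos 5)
Your argument is correct and coincides in substance with the paper's treatment: the paper offers no written proof of \cref{lift-of-adjunction}, deferring to \cite[Proposition~21]{bourke-garner-I} specialized to categories of arrows (and to the pullback square~(6.3) of \cite{riehl-monoidal-ams}), and your direct transposition of lifting problems along the lifted adjunction between $\cal{E}^\to$ and $\cal{F}^\to$ --- with fillers and the naturality conditions in $\cal{I}$ and $\cal{J}$ corresponding under the adjunction bijection --- is precisely the standard argument underlying that citation, so no gap remains.
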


\begin{proposition} \label{lift-of-mates}
Let $u \co \cal{I} \to \cal{E}^\to$ and $v \co \cal{J} \to \cal{F}^\to$.
Let $F_1 \dashv G_1$ and $F_2 \dashv G_2$ be adjunctions between $\cal{E}$ and $\cal{F}$ satisfying the equivalent conditions of \cref{lift-of-adjunction}.
Let $m \co F_1 \to F_2$ and $n \co G_2 \to G_1$ be natural transformations forming mates.
Then the following are equivalent:
\begin{enumerate}[(i)]
\item
The natural transformation $m$ lifts as follows:
\[
\xymatrix@C=1.2cm{
  \cal{I}
  \rtwocell_{\widetilde{F_2}}^{\widetilde{F_1}}{\widetilde{m}}
  \ar[d]_{u}
&
  \liftl{\cal{J}}
  \ar[d]^{\liftl{v}}
\\
  \cal{E}^\to
  \rtwocell_{F_2}^{F_1}{m}
&
  \cal{F}^\to
\rlap{,}}
\]
\item
The natural transformation $n$ lifts as follows:
\[
\xymatrix@C=1.2cm{
  \cal{J}
  \rtwocell_{\widetilde{G_1}}^{\widetilde{G_2}}{\widetilde{n}}
  \ar[d]_{u}
&
  \liftr{\cal{I}}
  \ar[d]^{\liftl{v}}
\\
  \cal{F}^\to
  \rtwocell_{G_1}^{G_2}{n}
&
  \cal{E}^\to
\rlap{.}}
\]
\qed
\end{enumerate}
\end{proposition}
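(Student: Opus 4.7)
The plan is to reduce both sides of the equivalence to pointwise compatibility conditions and then exchange them using the bijection between lifting problems induced by the two adjunctions. First I would note that specifying a natural transformation $\widetilde{m} \co \widetilde{F_1} \to \widetilde{F_2}$ lifting $m$ amounts to checking, at each $i \in \cal{I}$, that the square $m_{u_i} \co F_1(u_i) \to F_2(u_i)$ underlies a morphism in $\liftl{\cal{J}}$ from $\widetilde{F_1}(i)$ to $\widetilde{F_2}(i)$: naturality in $i$ follows automatically from naturality of $m$ together with functoriality of the two lifts $\widetilde{F_1}, \widetilde{F_2}$. The symmetric observation applies to $\widetilde{n}$.

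Next I would spell out these pointwise conditions explicitly. The condition for $m_{u_i}$ says that, for every $j \in \cal{J}$ and every lifting problem of $F_2(u_i)$ against $v_j$, the filler chosen by $\widetilde{F_2}(i)$ agrees, after composition with $m_{u_i}$, with the filler chosen by $\widetilde{F_1}(i)$ on the precomposed problem. The condition for $n_{v_j}$ expresses the analogous compatibility between the fillers selected by $\widetilde{G_2}(j)$ and $\widetilde{G_1}(j)$ against $u_i$, mediated by $n_{v_j}$.

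I would then invoke the argument underlying \cref{lift-of-adjunction}: the filler chosen by $\widetilde{F_k}(i)$ on a lifting problem against $v_j$ is, by construction, the transpose under $F_k \dashv G_k$ of the filler chosen by $\widetilde{G_k}(j)$ on the transposed lifting problem against $u_i$. Applying this for $k = 1, 2$ and using the mate identity relating $m$ and $n$ to turn postcomposition with $m_{u_i}$ into postcomposition with $n_{v_j}$ (up to units and counits) converts the compatibility condition on the $\cal{F}$-side into exactly the compatibility condition on the $\cal{E}$-side. This gives the two implications (i) $\Leftrightarrow$ (ii), and also matches the required $2$-cell on the nose, since the lifted $\widetilde{m}$ and $\widetilde{n}$ have the specified underlying natural transformations.

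The main obstacle is the bookkeeping of units, counits, and transposes needed to verify that the mate identity converts the $m$-compatibility into precisely the $n$-compatibility, rather than some a priori weaker cousin of it. To avoid that dense but routine diagram chase, I would alternatively appeal directly to \cite[Proposition~21]{bourke-garner-I} with the double categories of squares specialised to arrow categories, as the paper already notes implies both \cref{lift-of-adjunction,lift-of-mates}.
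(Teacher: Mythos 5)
Your proposal is correct, and in fact your fallback is exactly what the paper does: the paper offers no direct argument for this statement, deferring (together with \cref{lift-of-adjunction}) to \cite[Proposition~21]{bourke-garner-I} with double categories of squares specialised to arrow categories. Your primary route — the direct argument — is a sound unpacking of that result: since $\liftl{v}$ and $\liftr{u}$ are faithful (a morphism of left or right maps is a square satisfying a compatibility property), lifting $m$ or $n$ is indeed a pointwise condition, with naturality inherited automatically from $m$, $n$ and the functoriality of the lifts; and the equivalence of the two pointwise conditions is the standard mate/transposition exchange, using that the fillers chosen by $\widetilde{F_k}(i)$ are by construction the adjoint transposes of those chosen by $\widetilde{G_k}(j)$ on the transposed problems. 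The one place where care is needed is the step you flag as bookkeeping: the mate identity (expressing $n$ in terms of $m$ via units and counits, and conversely) is precisely what converts the $m$-compatibility square for $\widetilde{F_1}(i),\widetilde{F_2}(i)$ into the $n$-compatibility square for $\widetilde{G_2}(j),\widetilde{G_1}(j)$, and not merely a consequence of it; spelling this out is the entire content of the proof, so if you take the direct route you should actually carry out that chase rather than wave at it. What the citation buys is exactly that this chase has been done once and for all at the level of double categories; what your direct sketch buys is a self-contained, elementary argument in the special case of arrow categories, which is the only case the paper needs.
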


We generalize \cref{lift-of-adjunction} to Leibniz adjunctions~\cite{riehl-verity:reedy}.
Let us fix functors $F \co \cal{K} \times \cal{E} \to \cal{F}$ and $G \co \cal{K}^{\op} \times \cal{F} \to \cal{E}$ related, for $k \in \cal{K}$, by an adjunction:
\[
\xymatrix@C+1em{
  \cal{E}
  \ar@<5pt>[r]^{F(k, \arghole)}
  \ar@{}[r]|{\bot}
&
  \cal{F}
  \ar@<5pt>[l]^{G(k, \arghole)}
\rlap{.}}
\]
Let $ \widehat{F} \co \cal{K}^\to \times \cal{E}^\to \to \cal{F}^\to$ and $ \widehat{G} \co (\cal{K}^{\op})^\to \times \cal{F}^\to \to \cal{E}^\to$ denote the Leibniz constructions for~$F$ and~$G^{\op}$, using pullback instead of pushout for~$\widehat{G}$.
Here and below we assume that the categories under consideration have sufficient structure to carry out the relevant Leibniz constructions.
The following proposition is a generalization of \cite[Theorem~6.5]{riehl-monoidal-ams}.

\begin{proposition} \label{lift-of-leibniz-adjunction}
Let $u \co \cal{I} \to \cal{E}^\to$, $v \co \cal{J} \to \cal{F}^\to$ be functors.
Then the following are equivalent for $h \co X \to Y$ in $\cal{K}$, naturally in $\cal{I}$ and $\cal{J}$:
\begin{enumerate}[(i)]
\item liftings $F' \co \cal{I} \to \liftl{\cal{J}}$ of $\widehat{F}(h, \arghole) \co \cal{E}^\to \to \cal{F}^\to$ making the following diagram commute:
\[
\xymatrix@C=1.2cm{
  \cal{I}
  \ar[r]^{F'}
  \ar[d]_{u}
&
  \liftl{\cal{J}}
  \ar[d]^{\liftl{v}}
\\
  \cal{E}^\to
  \ar[r]_-{\widehat{F}(h, \arghole)}
&
  \cal{F}^\to
\rlap{,}}
\]
\item liftings $G' \co \cal{J} \to \liftr{\cal{I}}$ of $\widehat{G}(h, \arghole) \co \cal{F}^\to \to \cal{E}^\to$ making the following diagram commute:
\[
\begin{gathered}
\xymatrix@C+2em{
  \cal{J}
  \ar[d]_{v}
  \ar[r]^{G'}
&
  \liftr{\cal{I}}
  \ar[d]^{\liftr{u}}
\\
  \cal{F}^\to
  \ar[r]_-{\widehat{G}(h, \arghole)}
&
  \cal{E}^\to
\rlap{.}}
\end{gathered}
\]
\qed
\end{enumerate}
\end{proposition}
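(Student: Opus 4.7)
The strategy is to reduce the claim to the analogue of \cref{lift-of-adjunction} for the adjunction on arrow categories given by the Leibniz construction. Specifically, for each $h \co X \to Y$ in $\cal{K}$, the family of parametrized adjunctions $F(k, -) \dashv G(k, -)$ induces an adjunction $\widehat{F}(h, -) \dashv \widehat{G}(h, -)$ between $\cal{E}^\to$ and $\cal{F}^\to$: by the universal properties of pushout and pullback together with the base adjunctions, a square $\widehat{F}(h, f) \to g$ in $\cal{F}^\to$ corresponds bijectively to a square $f \to \widehat{G}(h, g)$ in $\cal{E}^\to$. This transposition moreover carries diagonal fillers of the first square bijectively to diagonal fillers of the second.

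Given this, the implication (i) $\Rightarrow$ (ii) is constructed as follows. For each $j \in \cal{J}$ we equip $\widehat{G}(h, v_j)$ with a right $\cal{I}$-map structure: given $i \in \cal{I}$ and a square $u_i \to \widehat{G}(h, v_j)$ in $\cal{E}^\to$, transpose it across the Leibniz adjunction to a square $\widehat{F}(h, u_i) \to v_j$ in $\cal{F}^\to$, apply the diagonal filler provided by the left $\cal{J}$-map structure $F'(i)$ on $\widehat{F}(h, u_i)$, and transpose the filler back to obtain a filler of the original square. The implication (ii) $\Rightarrow$ (i) is dual, and the two constructions are mutually inverse by bijectivity of the transposition.

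It then remains to verify the naturality conditions: that the lifting structure on $\widehat{G}(h, v_j)$ is natural in $i$ and $j$ (and so assembles into a functor $G' \co \cal{J} \to \liftr{\cal{I}}$), that morphisms of left $\cal{J}$-maps in $\cal{J}$ correspond to morphisms of right $\cal{I}$-maps under the construction, and that the entire bijection is natural in the objects $u \co \cal{I} \to \cal{E}^\to$ and $v \co \cal{J} \to \cal{F}^\to$ of $\CAT_{/\cal{E}^\to}$ and $\CAT_{/\cal{F}^\to}$. Each of these reduces to a diagram chase combining the naturality of the adjunction transposition with the functoriality of $F'$ (resp.\ $G'$) and the naturality clauses in \cref{def:right-map}. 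I expect no conceptual difficulty; the main obstacle is purely one of bookkeeping, and the argument is formally parallel to that of \cref{lift-of-adjunction}, with the Leibniz adjunction playing the role of the base adjunction.
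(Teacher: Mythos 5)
Your proposal is correct and follows the argument the paper intends (the proof is omitted, being noted as a generalization of Theorem~6.5 of Riehl's monoidal algebraic model structures paper): the parametrized adjunctions induce, for fixed $h$, the Leibniz adjunction $\widehat{F}(h,\arghole) \dashv \widehat{G}(h,\arghole)$, under which lifting problems and their fillers for $\widehat{F}(h,u_i)$ against $v_j$ transpose bijectively to those for $u_i$ against $\widehat{G}(h,v_j)$, compatibly with morphisms in $\cal{I}$ and $\cal{J}$ and with maps over $\cal{E}^\to$ and $\cal{F}^\to$. You are also right that this is only formally parallel to \cref{lift-of-adjunction} rather than an instance of it, since left and right maps are defined via squares in $\cal{E}$ and $\cal{F}$ rather than in their arrow categories, so the explicit transposition of squares and diagonal fillers is exactly the needed ingredient.
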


\begin{corollary} \label{lift-of-leibniz-adjunction-pullback-form}
For a functor $u \co \cal{I} \to \cal{E}^\to$ and a map $h \co X \to Y$ in $\cal{K}$, there is a pullback of categories of the form
\[
\xymatrix@C=1.5cm{
  \liftr{\cal{I}}
  \ar[r]
  \ar[d]_{\liftr{(\widehat{F}(h, \arghole) \cc u)}}
  \pullback{dr}
&
  \liftr{\cal{I}}
  \ar[d]^{\liftr{u}}
\\
  \cal{F}^\to
  \ar[r]_-{\widehat{G}(h, \arghole)}
&
  \cal{E}^\to
\rlap{.}}
\]
\end{corollary}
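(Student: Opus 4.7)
The plan is to derive the pullback square as a Yoneda-style consequence of \cref{lift-of-leibniz-adjunction} together with the orthogonality adjunction~\eqref{garner-adjunction}. Write $P$ for the pullback of $\liftr{u}$ along $\widehat{G}(h, \arghole)$; the goal is to identify $P$ with $\liftr{(\widehat{F}(h, \arghole) \cc u)}$ as a category over $\cal{F}^\to$.

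First, I unfold the universal property of $P$: for any category $\cal{J}$ equipped with $v \co \cal{J} \to \cal{F}^\to$, functors $\cal{J} \to P$ over $\cal{F}^\to$ correspond bijectively to liftings $G' \co \cal{J} \to \liftr{\cal{I}}$ of $\widehat{G}(h, \arghole) \cc v$, i.e.\ to precisely the data appearing in condition~(ii) of \cref{lift-of-leibniz-adjunction}. That proposition then translates such $G'$, naturally in $(\cal{J}, v)$, into liftings $F' \co \cal{I} \to \liftl{\cal{J}}$ of $\widehat{F}(h, \arghole) \cc u$ as in condition~(i), which is the same thing as a morphism $(\cal{I}, \widehat{F}(h, \arghole) \cc u) \to (\liftl{\cal{J}}, \liftl{v})$ in $\CAT_{/\cal{F}^\to}$.

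Finally, I invoke the orthogonality adjunction~\eqref{garner-adjunction} in its relative version over $\cal{F}^\to$ in place of $\cal{E}^\to$ (the construction of \cite[Proposition~3.8]{garner:small-object-argument} is identical, being relative to the specified codomain category). This converts such $F'$ into functors $\cal{J} \to \liftr{(\widehat{F}(h, \arghole) \cc u)}$ in $\CAT_{/\cal{F}^\to}$. Composing the three natural bijections yields an isomorphism
\[
\CAT_{/\cal{F}^\to}(\cal{J}, P) \iso \CAT_{/\cal{F}^\to}\bigl(\cal{J}, \liftr{(\widehat{F}(h, \arghole) \cc u)}\bigr)
\]
for all $(\cal{J}, v)$, and Yoneda delivers the desired isomorphism over $\cal{F}^\to$, which is what the pullback square asserts.

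The main point that must be handled carefully is the bookkeeping of base categories: $\cal{I}$ carries two relevant structures over $\cal{F}^\to$ (none a priori, but one obtained through $\widehat{F}(h, \arghole) \cc u$), and $\liftr{\cal{I}}$ in the top-left corner of the diagram refers to right maps with respect to $\widehat{F}(h, \arghole) \cc u$ rather than $u$. Once this is tracked, all three steps are formal consequences of the cited results and no further obstacle is expected.
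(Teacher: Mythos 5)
Your argument is correct and is essentially the paper's own proof: both identify maps into the pullback with the item~(ii) liftings of \cref{lift-of-leibniz-adjunction}, transport these to item~(i) liftings naturally in $(\cal{J}, v)$, convert those via the adjunction~\eqref{garner-adjunction} (taken over $\cal{F}^\to$) into functors $\cal{J} \to \liftr{(\widehat{F}(h, \arghole) \cc u)}$, and conclude by Yoneda. The only difference is the direction in which you traverse the chain of bijections, plus your (correct and welcome) explicit bookkeeping of which structure map over $\cal{F}^\to$ the top-left $\liftr{\cal{I}}$ carries.
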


\begin{proof}
By the Yoneda lemma and the universal property of pullbacks, it suffices to show that diagrams of the form
\[
\xymatrix{
  \cal{J}
  \ar[dr]_v
  \ar[rr]
&&
  \liftr{\cal{I}}
  \ar[dl]^{\liftr{(\widehat{F}(h, \arghole) \cc u)}}
\\&
  \cal{F}^\to
\rlap{.}}
\]
are in natural bijective correspondence with diagrams as in item~(ii) of~\cref{lift-of-leibniz-adjunction}.
But by the adjunction~\eqref{garner-adjunction}, the former are in natural bijective correspondence with diagrams as in item~(i) of~\cref{lift-of-leibniz-adjunction}.
\end{proof}

The next results spell out a special case of~\cref{lift-of-leibniz-adjunction,lift-of-leibniz-adjunction-pullback-form} which will be used in~\cref{sec:alg-fib,sec:alg-frob}.
Fix categories $\cal{E}$ and $\cal{F}$, and let $\Adj(\cal{E}, \cal{F})$ be the category of adjunctions~$U \dashv V$ (with $U \co \cal{E} \to \cal{F}$ and $V \co \cal{F} \to \cal{E}$) and maps of adjunctions~\cite[Chapter IV,\S 7]{maclane-cawm}.
We then have functors
\[
\eval_L \co \Adj(\cal{E}, \cal{F}) \times \cal{E} \to \cal{F}
\,, \quad
\eval_R \co \Adj(\cal{E}, \cal{F})^{\op} \times \cal{F} \to \cal{E}
\]
defined by evaluation of the left and right adjoint, respectively.

\begin{proposition} \label{pitchfork-leibniz-most-general-example}
Let $u \co \cal{I} \to \cal{E}^\to$ and $v \co \cal{J} \to \cal{E}^\to$ be functors.
For a map of adjunctions $\alpha \co (U, V) \to (U',V')$, the following are equivalent:
\begin{enumerate}[(i)]
\item liftings of $\hateval_L(\alpha)$ making the following diagram commute:
\[
\begin{gathered}
\xymatrix@C=1.5cm{
  \cal{I}
  \ar[r]
  \ar[d]_{u}
&
  \liftl{\cal{J}}
  \ar[d]^{\liftl{v}}
\\
  \cal{E}^\to
  \ar[r]_-{\hateval_L(\alpha, \arghole)}
&
  \cal{F}^\to
\rlap{,}}
\end{gathered}
\]
\item liftings of $\hateval_R(\alpha)$ making the following diagram commute:
\[
\begin{gathered}
\xymatrix@C+2em{
  \cal{J}
  \ar[d]_{v}
  \ar[r]
&
  \liftr{\cal{I}}
  \ar[d]^{\liftr{u}}
\\
  \cal{F}^\to
  \ar[r]_-{\hateval_R(\alpha, \arghole)}
&
  \cal{E}^\to
\rlap{.}}
\end{gathered}
\]
\end{enumerate}
\end{proposition}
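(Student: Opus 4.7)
The plan is to obtain this proposition as a direct specialization of \cref{lift-of-leibniz-adjunction}. Concretely, I would take $\cal{K} \defeq \Adj(\cal{E}, \cal{F})$ with $F \defeq \eval_L \co \Adj(\cal{E}, \cal{F}) \times \cal{E} \to \cal{F}$ and $G \defeq \eval_R \co \Adj(\cal{E}, \cal{F})^{\op} \times \cal{F} \to \cal{E}$. The pointwise adjunction hypothesis of \cref{lift-of-leibniz-adjunction}, namely that $F(k, \arghole) \dashv G(k, \arghole)$ for every $k \in \cal{K}$, is tautological here: an object $k = (U, V) \in \Adj(\cal{E}, \cal{F})$ is by definition an adjunction $U \dashv V$, and $\eval_L(k, \arghole) = U$, $\eval_R(k, \arghole) = V$.

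Taking $h \defeq \alpha \co (U,V) \to (U',V')$ as a morphism in $\cal{K}$, the Leibniz constructions $\widehat{F}(\alpha, \arghole)$ and $\widehat{G}(\alpha, \arghole)$ appearing in \cref{lift-of-leibniz-adjunction} coincide by definition with the functors $\hateval_L(\alpha, \arghole) \co \cal{E}^\to \to \cal{F}^\to$ and $\hateval_R(\alpha, \arghole) \co \cal{F}^\to \to \cal{E}^\to$ featuring in our statement. Hence the two squares whose liftings are compared in items~(i) and~(ii) of the proposition match verbatim the squares in items~(i) and~(ii) of \cref{lift-of-leibniz-adjunction}, and the equivalence of liftings is transferred directly.

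The one point that deserves attention in carrying out this specialization is the handedness of $\alpha$ in the two items: a map of adjunctions $\alpha \co (U,V) \to (U',V')$ determines a natural transformation $U \to U'$ between the left adjoints and, via mate correspondence, a natural transformation $V' \to V$ going the opposite way between the right adjoints. This is precisely the reason why $\eval_R$ is contravariant in its first argument, as already reflected in its stated domain $\Adj(\cal{E}, \cal{F})^{\op} \times \cal{F}$, and so is compatible with the contravariance of $G$ in the first variable assumed in \cref{lift-of-leibniz-adjunction}. I expect no real obstacle beyond spelling out this bookkeeping, since all the substantive work, the Leibniz mate transfer between lifting conditions on left and right adjoints, is already encapsulated in \cref{lift-of-leibniz-adjunction}.
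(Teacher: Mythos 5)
Your proposal is correct and is exactly the paper's route: the proposition is stated precisely as the specialization of \cref{lift-of-leibniz-adjunction} to $\cal{K} = \Adj(\cal{E},\cal{F})$ with $F = \eval_L$, $G = \eval_R$, and $h = \alpha$, where the pointwise adjunction hypothesis holds tautologically. Your remark on the contravariance of $\eval_R$ in its first argument is the same bookkeeping the paper builds into the stated domain $\Adj(\cal{E},\cal{F})^{\op} \times \cal{F}$, so nothing further is needed.
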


\begin{corollary} \label{cor-to-most-gen-example}
For a functor $u \co \cal{I} \to \cal{E}^\to$ and a map of adjunctions $\alpha \co (U, V) \to (U',V')$, there is a pullback of categories of the form
\[
\xymatrix@C=1.5cm{
  \liftr{\cal{I}}
  \ar[r]
  \ar[d]_{\liftr{(\hateval_L(\alpha, \arghole) \cc u)}}
  \pullback{dr}
&
  \liftr{\cal{I}}
  \ar[d]^{\liftr{u}}
\\
  \cal{F}^\to
  \ar[r]_-{\hateval_R(\alpha, \arghole)}
&
  \cal{E}^\to
\rlap{.}}
\]
\end{corollary}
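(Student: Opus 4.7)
The plan is to mimic the proof of~\cref{lift-of-leibniz-adjunction-pullback-form} line by line, simply substituting \cref{pitchfork-leibniz-most-general-example} for \cref{lift-of-leibniz-adjunction}. The two statements play the same structural role: each asserts a bijection between liftings of a ``left'' action (Leibniz product with~$h$, respectively the Leibniz evaluation~$\hateval_L(\alpha,\arghole)$) through the left-orthogonal forgetful functor and liftings of the ``right'' action through the right-orthogonal forgetful functor.

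First I would reduce the claim by Yoneda in $\mathbf{CAT}/\cal{F}^\to$: to exhibit the claimed square as a pullback, it suffices to show that for every $v \co \cal{J} \to \cal{F}^\to$, diagrams of the form
\[
\xymatrix@C-0.2cm{
  \cal{J}
  \ar[dr]_{v}
  \ar[rr]
&&
  \liftr{\cal{I}}
  \ar[dl]^{\liftr{(\hateval_L(\alpha, \arghole) \cc u)}}
\\&
  \cal{F}^\to
}
\]
are in natural bijection with pairs consisting of a lifting of $v$ through $\liftr{u}$ and a factorization of $\hateval_R(\alpha,\arghole) \cc v$ through $u$ in a compatible way, i.e.\ with diagrams as in item~(ii) of~\cref{pitchfork-leibniz-most-general-example}.

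Next, unfolding the definition of $\liftr{(\hateval_L(\alpha, \arghole) \cc u)}$ and applying the adjunction~\eqref{garner-adjunction} between $\liftl{(\arghole)}$ and $\liftr{(\arghole)}$, such diagrams correspond naturally to diagrams as in item~(i) of~\cref{pitchfork-leibniz-most-general-example} (the image of $v$ under $\liftl{(\arghole)}$ supplies the right-hand vertical arrow, and the lower triangle becomes the required commuting square over $\hateval_L(\alpha, \arghole)$). Finally, the equivalence of items~(i) and~(ii) in \cref{pitchfork-leibniz-most-general-example} closes the loop.

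There is no real obstacle here beyond bookkeeping: the only thing to check is that the natural bijection supplied by \cref{pitchfork-leibniz-most-general-example} is indeed natural in $\cal{J}$ and $v$ (so that the resulting correspondence is a Yoneda-style isomorphism of representables), which is built into the ``naturally in $\cal{I}$ and $\cal{J}$'' clause of that proposition. Thus the corollary follows formally, exactly as \cref{lift-of-leibniz-adjunction-pullback-form} follows from \cref{lift-of-leibniz-adjunction}.
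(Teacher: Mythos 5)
Your proof is correct and is essentially the paper's: the corollary is just the special case of \cref{lift-of-leibniz-adjunction-pullback-form} with $\cal{K} = \Adj(\cal{E},\cal{F})$, $F = \eval_L$, $G = \eval_R$ and $h = \alpha$ (as \cref{pitchfork-leibniz-most-general-applied} indicates), and the Yoneda-plus-adjunction~\eqref{garner-adjunction} argument you rerun with \cref{pitchfork-leibniz-most-general-example} in place of \cref{lift-of-leibniz-adjunction} is exactly the proof of that result. The only slip is the phrase ``a lifting of $v$ through $\liftr{u}$'' --- the cone datum is a lift of $\hateval_R(\alpha,\arghole)\cc v$ through $\liftr{u}$ --- but your appeal to item~(ii) of \cref{pitchfork-leibniz-most-general-example} shows you mean the right thing.
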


\begin{remark} \label{pitchfork-leibniz-most-general-applied}
Recall the adjunction between the cylinder and cocylinder functors in~\eqref{equ:cyl-exp-adj} and write $\bar{\delta}^k \co \exp(I, -) \to \Id$ for the mate of $\delta^k \co \Id \to I \otimes (-)$, for $k \in \braces{0 \,, 1}$, so that $\alpha \defeq (\delta^k, \bar{\delta}^k)$ is a map of adjunctions from $\Id \dashv \Id$ to $I \otimes (-) \dashv \exp(I, -)$.
Inspecting~\eqref{delta-otimes-definition}, we see that the left adjoint $\delta^k \hatotimes (-)$ in~\eqref{equ:leibniz-prod-exp} is $\hateval_L(\alpha, -)$.
Similarly, inspecting~\eqref{hatexp-delta-definition}, we see that the right adjoint~$\hatexp(\bar{\delta}^k, -)$ in~\eqref{equ:leibniz-prod-exp} is $\hateval_R(\alpha, -)$.
This shows that the adjunction~\eqref{equ:leibniz-prod-exp} fits into the setting of \cref{pitchfork-leibniz-most-general-example,cor-to-most-gen-example}.
\end{remark}


Next, we record some facts about the interaction between orthogonality functors and Kan extensions along fully faithful functors.
These follow immediately from~\cite[Lemma~24]{bourke-garner-I}.

\begin{proposition} \label{kan-extension-closure}
Let $F \co \cal{I} \to \cal{J}$ be a fully faithful functor.
\begin{enumerate}[(i)]
\item Assume that the pointwise left Kan extension of $u \co \cal{I} \to \cal{E}^\to$ along $F$ exists:
\[
\xymatrix@!C@C-1em{
  \cal{I}
  \ar[dr]_{u}
  \ar[rr]^{F}
&&
  \cal{J}
  \ar[dl]^{\Lan_F u}
\\&
  \cal{E}^\to
\rlap{.}}
\]
Then the functor $\liftr{F} \co \liftr{\cal{J}} \to \liftr{\cal{I}}$, fitting in the diagram
\[
\xymatrix@!C@C-1em{
  \liftr{\cal{I}}
  \ar[dr]_{\liftr{u}}
&&
  \liftr{\cal{J}}
  \ar[ll]_{\liftr{F}}
  \ar[dl]^{\liftr{(\Lan_F u)}}
\\&
  \cal{E}^\to
\rlap{,}}
\]
is an isomorphism.
\item Assume that the pointwise right Kan extension of $u \co \cal{I} \to \cal{E}^\to$ along $F$ exists:
\[
\xymatrix@!C@C-1em{
  \cal{I}
  \ar[dr]_{u}
  \ar[rr]^{F}
&&
  \cal{J}
  \ar[dl]^{\Ran_F u}
\\&
  \cal{E}^\to
\rlap{.}}
\]
Then the functor $\liftl{F} \co \liftl{\cal{J}} \to \liftl{\cal{I}}$, fitting in the diagram
\[
\xymatrix@!C@C-1em{
  \liftl{\cal{I}}
  \ar[dr]_{\liftl{u}}
&&
  \liftl{\cal{J}}
  \ar[ll]_{\liftl{F}}
  \ar[dl]^{\liftl{(\Ran_F u)}}
\\&
  \cal{E}^\to
\rlap{,}}
\]
is an isomorphism.
\end{enumerate}
\end{proposition}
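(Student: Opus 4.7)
\emph{Plan.} I will prove part (i); part (ii) then follows by dualising, since passing to opposite categories exchanges left with right orthogonality and left with right Kan extensions. Since $F$ is fully faithful and $\Lan_F u$ is pointwise, the unit $\eta \co u \to (\Lan_F u) \cc F$ is a natural isomorphism, so a right $\cal{J}$-map $(f, \phi'')$ restricts along $F$ (and is transported across $\eta$) to a right $\cal{I}$-map; this is the functor $\liftr{F}$ in the statement. The task is thus to exhibit an inverse.

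To construct the candidate inverse $G \co \liftr{\cal{I}} \to \liftr{\cal{J}}$, I would use the pointwise Kan extension formula. For $j \in \cal{J}$, the object $(\Lan_F u)(j)$ is, by assumption, the colimit in $\cal{E}^\to$ of $u \cc \pi_j \co (F \downarrow j) \to \cal{E}^\to$, where $\pi_j$ is the projection from the comma category. Since colimits in $\cal{E}^\to$ are computed componentwise, a lifting problem $(s, t) \co (\Lan_F u)(j) \to f$ corresponds, by universality, to a compatible family of lifting problems $(s_{i, \sigma}, t_{i, \sigma}) \co u_i \to f$ indexed over $(i, \sigma) \in (F \downarrow j)$. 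Applying $\phi$ to each such problem yields fillers $B_i \to \dom(f)$; by the naturality condition of \cref{def:right-map}(i), these form a cocone under the diagram of codomains, inducing a unique map $\cod((\Lan_F u)(j)) \to \dom(f)$ that I would declare to be $\phi'(j, s, t)$. Naturality of $\phi'$ in $\cal{J}$ then follows from functoriality of the pointwise Kan extension.

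For $G \cc \liftr{F} = \id$ I would observe that the naturality condition forces any right $\cal{J}$-map $(f, \phi'')$ to agree on $(\Lan_F u)(j)$ with the colimit-induced map built from its restriction to $\cal{I}$, so that $\phi''$ is recovered from its restriction. For $\liftr{F} \cc G = \id$ I would use that, since $F$ is fully faithful, $(i, \id_{F(i)})$ is a terminal object of $(F \downarrow F(i))$; the colimit of $u \cc \pi_{F(i)}$ is therefore canonically $u_i$ with $\eta_i$ as the universal map, so extending $\phi$ and then restricting along $F$ returns $\phi$ on the nose.

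The main obstacle is the bookkeeping needed to verify that the naturality condition of \cref{def:right-map}(i) translates precisely into the cocone condition required by the colimit universal property, noting in particular that colimits in $\cal{E}^\to$ involve domain and codomain components that must match coherently with the structure morphisms $u_i$. A more conceptual route---in fact the one cited by the authors---is to invoke \cite[Lemma~24]{bourke-garner-I}, which derives both parts simultaneously from the universal property of $\Lan_F u$ via the adjunction~\eqref{garner-adjunction}.
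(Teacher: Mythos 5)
Your proposal is correct, but it takes a different route from the paper: the paper's entire proof is a one-line appeal to \cite[Lemma~24]{bourke-garner-I} (specialised to categories of arrows), whereas you unfold the pointwise colimit formula $(\Lan_F u)(j) \cong \colim_{(i,\sigma)\in F\downarrow j} u_i$ and verify the isomorphism by hand. Your verification is sound: maps out of a colimit in $\cal{E}^\to$ are cocones of squares, the naturality clause in \cref{def:right-map} is exactly the condition making the chosen fillers a cocone on codomain components (colimits in $\cal{E}^\to$ being componentwise), uniqueness of maps out of colimits gives both the filler equations for the induced map and the identity $G \cc \liftr{F} = \id$ (via naturality of $\phi''$ against the legs $(\Lan_F u)(\sigma)\cc\eta_i$), and full faithfulness makes $(i,\id_{Fi})$ terminal in $F\downarrow Fi$, giving $\liftr{F}\cc G = \id$ after transporting across the invertible unit $\eta$; part (ii) indeed follows by passing to opposite categories. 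What remains is only routine bookkeeping you already flag: defining $G$ on morphisms of right maps and checking functoriality and compatibility over $\cal{E}^\to$, plus the naturality of $\phi'$ in $\cal{J}$ via uniqueness out of colimits. The trade-off: the citation route is shorter and delivers both parts (and the double-categorical generality of Bourke--Garner) at once, while your explicit argument is self-contained and makes visible the point the paper itself exploits elsewhere (cf.\ \cref{rem:colimit-decomp} and the colimit decompositions in the proofs of \cref{left-kan-extension-of-representables,rem-lift-suitable,thm:part-map-classifier}), namely that uniformity of fillers is precisely what lets lifting structures pass along colimit decompositions of the generating category.
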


We conclude this section by proving two results in the special case that $\cal{E}$ is a presheaf category, say $\cal{E} = \Psh(\mathbb{C})$ for $\mathbb{C}$ a small category.
We write $\yon \co \mathbb{C} \to \cal{E}$ for the Yoneda embedding.
Recall from \cref{sec:bac} that we write $\cal{E}_{\cart}^\to$ for the category of arrows and pullback squares in~$\cal{E}$.

\begin{lemma} \label{left-kan-extension-of-representables}
Let $\cal{J}$ be a full subcategory of $\cal{E}_{\cart}^\to$ closed under base change to representables, \ie closed under pullback along morphisms with domain a representable presheaf.
Let $\cal{I}$ denote its restriction to arrows into representables,
\[
\xymatrix@C-1em{
  \cal{I}
  \ar[rr]
  \ar[dr]
&&
  \cal{J}
  \ar[dl]
\\&
  \cal{E}^\to
\rlap{.}}
\]
Then the inclusion $\cal{J} \to \cal{E}^\to$ is the left Kan extension of $\cal{I} \to \cal{E}^\to$ along $\cal{I} \to \cal{J}$.
\end{lemma}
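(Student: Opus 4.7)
The plan is to use the pointwise formula for the left Kan extension. Writing $F \co \cal{I} \to \cal{J}$ for the inclusion, for each $j \co A \to B$ in $\cal{J}$ I must show that $j$, together with the evident cocone, is the colimit in $\cal{E}^\to$ of the composite
\[
  (F \downarrow j) \to \cal{I} \to \cal{E}^\to.
\]

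First I would describe the comma category $F \downarrow j$. Its objects are pairs $(i, f)$ with $i \in \cal{I}$ and $f$ a pullback square in $\cal{E}$ from $F(i)$ to $j$, so of the form
\[
\xymatrix{
  A'
  \ar[r]
  \ar[d]_{i}
  \pullback{dr}
&
  A
  \ar[d]^{j}
\\
  \yon(c)
  \ar[r]
&
  B
\rlap{.}}
\]
Since the bottom edge determines the top edge via the pullback, and since by hypothesis the pullback of $j$ along any map $\yon(c) \to B$ lies in $\cal{I}$, the forgetful functor sending such a pair to the bottom edge defines an equivalence $F \downarrow j \simeq \mathbb{C}/B$, where $\mathbb{C}/B$ is the category of elements of $B$. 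A morphism in $F \downarrow j$ is a pullback square between two such data compatible with the maps into $j$; again the bottom edge uniquely determines the morphism, matching morphisms in $\mathbb{C}/B$.

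Second, I would compute the colimit. Writing $j_c \co A \times_B \yon(c) \to \yon(c)$ for the pullback of $j$ along $\yon(c) \to B$, the diagram under consideration becomes $(\yon(c) \to B) \mapsto j_c$. Since colimits in $\cal{E}^\to$ are computed pointwise on domain and codomain, and since $\cal{E} = \Psh(\mathbb{C})$ is locally cartesian closed, I would argue separately:
\[
\colim_{\yon(c) \to B} \yon(c) = B
\qquad \text{(density theorem),}
\]
\[
\colim_{\yon(c) \to B} (A \times_B \yon(c)) = A \times_B \colim_{\yon(c) \to B} \yon(c) = A \times_B B = A,
\]
where the first equality in the second line uses that $j^* \co \cal{E}_{/B} \to \cal{E}_{/A}$ preserves colimits as a left adjoint in the locally cartesian closed category $\cal{E}$. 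The cocone maps $j_c \to j$ are the pullback squares themselves, and under the identifications above they assemble into the arrow $j$ itself. Hence $j$ with this cocone is the colimit.

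The only genuinely delicate point is the equivalence $F \downarrow j \simeq \mathbb{C}/B$; once this is in hand, the rest is a routine application of density and local cartesian closure. Combining both steps, the inclusion $\cal{J} \to \cal{E}^\to$ satisfies the pointwise formula and hence is the left Kan extension of $\cal{I} \to \cal{E}^\to$ along $F$.
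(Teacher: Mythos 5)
Your proposal is correct and follows essentially the same route as the paper: both verify the pointwise colimit formula, reindex the comma category as the category of elements of $B$ (equivalently, maps $\yon(c) \to B$) using closure under base change to representables, and conclude from the density theorem together with preservation of colimits by pullback in a presheaf category. The only place you are slightly terse is the full faithfulness of $F \downarrow j \simeq \mathbb{C}/B$ (the top edge of a morphism is determined by cartesianness, not merely "by the pullback"), but this is a routine check and does not affect the argument.
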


\begin{proof}
Since $\cal{E}^\to$ is cocomplete, we can verify the claim using the colimit formula for left Kan extensions.
All of the following will be functorial in an object $j \co A \to B$ of $\cal{J}$.
We consider the diagram $d \co S \to \cal{E}^\to$, where $S$ is the set of pullback squares of the form
\[
\xymatrix@C=1.2cm{
  A'
  \ar[r]
  \ar[d]_{i}
  \pullback{dr}
&
  A
  \ar[d]^{j}
\\
  \yon(x)
  \ar[r]_-{b}
&
  B
}
\]
with $i \co A' \to \yon(x)$ in $\cal{I}$, and the function $d$ maps such a square to $i \in \cal{E}^\to$.
Our goal is to show that its colimit is $j$.
Using the assumption that $\cal{J}$ is closed under base change to representables, the given diagram can be described equivalently as the diagram indexed by maps $b \co \yon(x) \to B$ and valued $b^*(j)$.
The claim can then be restated as $\colim_{b \co \yon(x) \to B} b^*(j) \iso j$, which holds since pullback commutes with colimits in presheaf categories, and $\colim_{b \co \yon(x) \to B} \yon(x) \iso B$.
\end{proof}

\begin{proposition} \label{awfs-on-arrows-into-representables}
Let~$\cal{J}$ be a full subcategory of $\cal{E}_{\cart}^\to$ closed under base change to representables.
Let $\cal{I}$ denote its restriction to arrows into representables,
\[
\xymatrix@C-1em{
  \cal{I}
  \ar[rr]
  \ar[dr]
&&
  \cal{J}
  \ar[dl]
\\&
  \cal{E}^\to
\rlap{.}}
\]
Then $\liftr{\cal{I}} = \liftr{\cal{J}}$.
\end{proposition}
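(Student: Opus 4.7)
The plan is to deduce the result directly from the two preceding general tools, namely \cref{left-kan-extension-of-representables} and part~(i) of \cref{kan-extension-closure}. The inclusion $F \co \cal{I} \to \cal{J}$ is fully faithful, since $\cal{I}$ is by definition the full subcategory of $\cal{J}$ spanned by the arrows into representables, and $\cal{J}$ is itself full in $\cal{E}_{\cart}^\to$. So $F$ meets the hypothesis of \cref{kan-extension-closure}.

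First, I would verify that the inclusion $\cal{J} \to \cal{E}^\to$ is the pointwise left Kan extension of $\cal{I} \to \cal{E}^\to$ along $F$. This is precisely the content of \cref{left-kan-extension-of-representables}, which applies verbatim because $\cal{J}$ is assumed closed under base change to representables. Thus no further computation is needed at this step.

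Next, I would apply part~(i) of \cref{kan-extension-closure} to the fully faithful functor $F \co \cal{I} \to \cal{J}$ together with $u \co \cal{I} \to \cal{E}^\to$. That proposition yields an isomorphism $\liftr{F} \co \liftr{\cal{J}} \to \liftr{\cal{I}}$ of categories over~$\cal{E}^\to$, which is exactly the claimed equality $\liftr{\cal{I}} = \liftr{\cal{J}}$.

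There is no real obstacle here: the work has been done in the earlier, more general lemmata, and the proposition is simply the specialization to the inclusion $\cal{I} \hookrightarrow \cal{J}$. The only thing to double-check is the ambient full faithfulness of $F$, which is immediate from the setup, and that the colimit formula for the left Kan extension in \cref{left-kan-extension-of-representables} indeed takes values in~$\cal{J}$ (not merely in~$\cal{E}^\to$), but this is already implicit in how that lemma is phrased.
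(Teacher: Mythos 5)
Your proposal is correct and matches the paper's own proof, which likewise obtains the result by combining \cref{left-kan-extension-of-representables} with part~(i) of \cref{kan-extension-closure} applied to the fully faithful inclusion $\cal{I} \to \cal{J}$. No essential difference in approach.
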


\begin{proof}
The result follows by combining part~(i) of \cref{kan-extension-closure} and \cref{left-kan-extension-of-representables}.
\end{proof}

\section{The functorial Frobenius condition}
\label{sec:frobc}

Recall that a \emph{functorial factorization} on a category $\cal{E}$ consist of a pair of functors $\LL, \RR \co \cal{E}^\to \to \cal{E}^\to$ providing factorizations
\[
\xymatrix{
  X
  \ar[rr]^{f}
  \ar[dr]_{\LL f}
&&
  Y
\\&
  \MM f
  \ar[ur]_{\RR f}
}
\]
functorially in~$f \in \cal{E}^\to$.
The functors $\LL$ and $\RR$ then admit a canonical copointing and pointing, respectively, given by the natural transformations $(\Id, \RR) \co \LL \to \Id$ and $(\LL, \Id) \co \Id \to \RR$.
We write $\pcoalg{\LL}$ for the category of copointed endofunctor algebras over $\LL$ and $\palg{\RR}$ for the category of pointed endofunctor algebras over $\RR$.
Both of these categories have evident forgetful functors into $\cal{E}^\to$.

For an \emph{algebraic weak factorization system} (awfs)~\cite{garner:small-object-argument,grandis-tholen-nwfs}, we require also a comultiplication $\LL \to \LL \LL$ and a multiplication $\RR \RR \to \RR$, giving the structure of a comonad on $\LL$ and of a monad on $\RR$, respectively, such that the canonical natural transformation $\LL \RR \to \RR \LL$ is a distributive law.
We write $\pCoalg{\LL}$ for the category of comonad coalgebras over~$\LL$ and $\pAlg{\RR}$ for the category of monad algebras over~$\RR$.
These come equipped with forgetful functors into $\cal{E}^\to$ and fulfill
\[
\pcoalg{\LL} = \liftl{\pAlg{\RR}}
\,, \quad
\palg{\RR} = \liftr{\pCoalg{\LL}}
\,.\]
An awfs $(\LL, \RR)$ has an underlying ordinary wfs, in which the left (right) class consists of the maps that admit the structure making them into an element of $\pcoalg{\LL}$ (of $\palg{\RR}$, respectively).
Recall also that an awfs $(\LL, \RR)$ is said to be \emph{algebraically-free} on $u \co \cal{I} \to \cal{E}^\to$ if $\pAlg{\RR} \iso \liftr{\cal{I}}$.
By Garner's small object argument~\cite{garner:small-object-argument}, every $u \co \cal{I} \to \cal{E}^\to$ with $\cal{I}$ small and $\cal{E}$ locally-presentable determines an awfs $(\LL, \RR)$ that is algebraically-free on $\cal{I}$.
We call such an awfs \emph{cofibrantly generated}.

We now state the analogue for an awfs of the Frobenius condition for a wfs.
This condition, called the functorial Frobenius condition, was stated for cloven wfs's in~\cite{garner:topological-simplicial}, which are slightly more general than awfs's.
Let us fix some notation.
For a category $\cal{E}$, we write~$\cal{E}^\to \times_\cal{E} \cal{E}^\to$ for the pullback of $\cod \co \cal{E}^\to \to \cal{E}$ with itself.
This is the category of cospans in $\cal{E}$, \ie pairs of arrows with common codomain.
Pullback gives a functor $P \co \cal{E}^\to \times_{\cal{E}} \cal{E}^\to \to \cal{E}^\to$ sending a cospan $(g, h)$ to $h^*(g)$.

\begin{definition} \label{functorial-frobenius}
An awfs $(\LL, \RR)$ in $\cal{E}$ satisfies the \emph{functorial Frobenius condition} if pullback along $\palg{\RR}$ preserves $\pcoalg{\LL}$, in the sense that we have a lift $\tilde{P}$ of $P$, as follows:
\[
\xymatrix@C+2em{
  \pcoalg{\LL} \times_{\cal{E}} \palg{\RR}
  \ar@{.>}[r]^{\tilde{P}}
  \ar[d]
&
  \pcoalg{\LL}
  \ar[d]
\\
  \cal{E}^\to \times_{\cal{E}} \cal{E}^\to
  \ar[r]_{P}
&
  \cal{E}^\to
\rlap{.}}
\]
\end{definition}

In this section, we obtain an analogue of~\cref{thm:frobenius-equivalence} for awfs's, \ie a necessary and sufficient condition for an algebraically-free awfs to satisfy the functorial Frobenius condition (\cref{thm:frobenius-comparison}).
We will apply this characterization in the next section in order to prove a functorial Frobenius property for the awfs having uniform fibrations as right maps.

In order to prove the desired characterization, it is convenient to work with a variant of the functorial Frobenius condition we introduce in~\cref{generalized-uniform-frob} below.
For this, we introduce some notation.
Given $u \co \cal{I} \to \cal{E}^\to$, we write $\cal{I}_{/\cal{E}}$ for the category with objects consisting of tuples $(X, i, a, b)$ with $X \in \cal{E}$, $i \in \cal{I}$ (giving a map $u_i \co A_i \to B_i$), $a \co A_i \to X$, $b \co B_i \to X$ such that the following diagram commutes:
\[
\xymatrix{
  A_i
  \ar[dr]_a
  \ar[rr]^{u_i}
&&
  B_i
  \ar[dl]^{b}
\\&
  X
\rlap{.}}
\]
We write $s \co \cal{I}_{/\cal{E}} \to \cal{E}$ for the first projection.
Then, for $v \co \cal{J} \to \cal{E}^\to$, we can form a pullback
\[
\xymatrix{
  \cal{I}_{/\cal{E}} \times_{\cal{E}} \cal{J}
  \ar[r]
  \ar[d]
  \pullback{dr}
&
  \cal{J}
  \ar[d]^{\cod \cc v}
\\
  \cal{I}_{/\cal{E}}
  \ar[r]_s
&
  \cal{E}
\rlap{.}}
\]
When $u$ and $v$ are the identity on $\cal{E}^\to$, we obtain
\[
\xymatrix{
  \cal{E}_{/\cal{E}}^\to \times_{\cal{E}} \cal{E}^\to
  \ar[r]
  \ar[d]
  \pullback{dr}
&
  \cal{E}^\to
  \ar[d]^{\cod}
\\
  \cal{E}^\to_{/\cal{E}}
  \ar[r]_s
&
  \cal{E}
\rlap{.}}
\]
Pullback gives a functor $Q \co \cal{E}^\to_{/\cal{E}} \times_{\cal{E}} \cal{E}^\to \to \cal{E}_{/\cal{E}}^\to$ sending
$(f, h)$ to $h^*(f)$.

\begin{definition} \label{generalized-uniform-frob}
Let $u \co \cal{I} \to \cal{E}^\to$, $v \co \cal{J} \to \cal{E}^\to$, $w \co \cal{K} \to \cal{E}^\to$ be categories of arrows.
We say that $(\cal{I}, \cal{J}, \cal{K})$ satisfies the \emph{generalized functorial Frobenius condition} if pullback along $\cal{J}$-maps sends $\cal{I}$-maps to $\cal{K}$-maps, \ie the functor $Q$ admits a lift $\tilde{Q}$ as below:
\begin{equation} \label{generalized-uniform-frob:lift}
\begin{gathered}
\xymatrix@C+2em{
  \cal{I}_{/\cal{E}} \times_{\cal{E}} \cal{J}
  \ar@{.>}[r]^{\tilde{Q}}
  \ar[d]_{u_{/\cal{E}} \times_{\cal{E}} v}
&
  \cal{K}_{/\cal{E}}
  \ar[d]^{w_{/\cal{E}}}
\\
  \cal{E}^\to_{/\cal{E}} \times_{\cal{E}} \cal{E}^\to
  \ar[r]_{Q}
&
  \cal{E}_{/\cal{E}}^\to
\rlap{.}}
\end{gathered}
\end{equation}
\end{definition}

The next proposition shows in what sense the generalized functorial Frobenius condition is functorial in its parameters.

\begin{proposition} \label{generalized-uniform-frob-functorial}
Consider functors $u_t \co \cal{I}_t \to \cal{E}^\to$, $v_t \co \cal{J}_t \to \cal{E}^\to$, $w_t \co \cal{K}_t \to \cal{E}^\to$, for $t \in \braces{1\,, 2}$, related as follows:
\begin{align*}
\xymatrix@!C@C-1.5em{
  \cal{I}_2
  \ar[rr]^{F}
  \ar[dr]_{u_2}
&&
  \cal{I}_1
  \ar[dl]^{u_1}
\\&
  \cal{E}^\to
\rlap{,}}
&&
\xymatrix@C-1.5em{
  \cal{J}_2
  \ar[rr]^-{G}
  \ar[dr]_{v_2}
&&
  \cal{J}_1
  \ar[dl]^{v_1}
\\&
  \cal{E}^\to
\rlap{,}}
&&
\xymatrix@C-1.5em{
  \cal{K}_1
  \ar[rr]^-{H}
  \ar[dr]_{w_1}
&&
  \cal{K}_2
  \ar[dl]^{w_2}
\\&
  \cal{E}^\to
\rlap{.}}
\end{align*}
If $(\cal{I}_1, \cal{J}_1, \cal{K}_1)$ satisfies the generalized functorial Frobenius condition, then so does $(\cal{I}_2, \cal{J}_2, \cal{K}_2)$.
\qed
\end{proposition}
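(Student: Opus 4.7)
The plan is to construct the desired lift $\tilde{Q}_2$ for $(\cal{I}_2, \cal{J}_2, \cal{K}_2)$ by transporting data along $F$ and $G$ into the domain of the given lift $\tilde{Q}_1$ for $(\cal{I}_1, \cal{J}_1, \cal{K}_1)$, applying that lift, and then transporting the result along $H$ into $\cal{K}_2{}_{/\cal{E}}$. Concretely, the hypothesis provides a functor
\[
\tilde{Q}_1 \co \cal{I}_1{}_{/\cal{E}} \times_{\cal{E}} \cal{J}_1 \to \cal{K}_1{}_{/\cal{E}}
\]
lifting $Q$ in the sense of~\eqref{generalized-uniform-frob:lift}.

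First I would observe that since $F$, $G$, $H$ are functors over $\cal{E}^\to$, they induce evident functors $F_{/\cal{E}} \co \cal{I}_2{}_{/\cal{E}} \to \cal{I}_1{}_{/\cal{E}}$ and $H_{/\cal{E}} \co \cal{K}_1{}_{/\cal{E}} \to \cal{K}_2{}_{/\cal{E}}$ over $\cal{E}^\to_{/\cal{E}}$, and an evident functor $F_{/\cal{E}} \times_{\cal{E}} G \co \cal{I}_2{}_{/\cal{E}} \times_{\cal{E}} \cal{J}_2 \to \cal{I}_1{}_{/\cal{E}} \times_{\cal{E}} \cal{J}_1$. Then I would define
\[
\tilde{Q}_2 \defeq H_{/\cal{E}} \cc \tilde{Q}_1 \cc (F_{/\cal{E}} \times_{\cal{E}} G).
\]

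The remaining step is a straightforward verification that $\tilde{Q}_2$ lifts $Q$ along $w_2{}_{/\cal{E}}$, \ie that $w_2{}_{/\cal{E}} \cc \tilde{Q}_2 = Q \cc (u_2{}_{/\cal{E}} \times_{\cal{E}} v_2)$. This follows by pasting: the equation $w_1 \cc H = w_2$ gives $w_2{}_{/\cal{E}} \cc H_{/\cal{E}} = w_1{}_{/\cal{E}}$, the hypothesis $w_1{}_{/\cal{E}} \cc \tilde{Q}_1 = Q \cc (u_1{}_{/\cal{E}} \times_{\cal{E}} v_1)$ is used next, and finally the equations $u_1 \cc F = u_2$ and $v_1 \cc G = v_2$ yield $(u_1{}_{/\cal{E}} \times_{\cal{E}} v_1) \cc (F_{/\cal{E}} \times_{\cal{E}} G) = u_2{}_{/\cal{E}} \times_{\cal{E}} v_2$. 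There is no real obstacle here; the statement is essentially formal, expressing that the lifting problem in~\eqref{generalized-uniform-frob:lift} is contravariantly functorial in $\cal{I}$ and $\cal{J}$ and covariantly functorial in $\cal{K}$, and the only thing to check is that the evident composite of functors has the claimed lifting property, which is immediate from commutativity of the three given triangles over $\cal{E}^\to$.
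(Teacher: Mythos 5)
Your proof is correct and is exactly the routine verification the paper leaves to the reader (the proposition is stated with its proof omitted as immediate): one precomposes the given lift $\tilde{Q}_1$ with the functors $F_{/\cal{E}} \times_{\cal{E}} G$ induced on slices and pullbacks by the maps over $\cal{E}^\to$, postcomposes with $H_{/\cal{E}}$, and checks commutativity by pasting the three given triangles. No gaps.
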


Next, we show how the generalized functorial Frobenius condition can be simplified under some mild assumptions.

\begin{proposition} \label{generalized-uniform-frob-nicer}
Let $u \co \cal{I} \to \cal{E}^\to$, $v \co \cal{J} \to \cal{E}^\to$, $w \co \cal{K} \to \cal{E}^\to$ be categories of arrows such that
\begin{equation} \label{uniform-frob:comprehension}
\begin{gathered}
\xymatrix@C-1em{
  \cal{J}
  \ar[rr]^{v}
  \ar[dr]_{\cod_{\cal{E}} \cc v}
&&
  \cal{E}^\to
  \ar[dl]^{\cod_{\cal{E}}}
\\&
  \cal{E}
}
\end{gathered}
\end{equation}
is a morphism of Grothendieck fibrations.
Then the following are equivalent:
\begin{enumerate}[(i)]
\item the triple $(\cal{I}, \cal{J}, \cal{K})$ satisfies the generalized functorial Frobenius condition,
\item there is a lift $\tilde{P}$ as below:
\begin{equation} \label{generalized-uniform-frob-nicer:lift}
\begin{gathered}
\xymatrix@C+2em{
  \cal{I} \times_{\cal{E}} \cal{J}
  \ar@{.>}[r]^{\tilde{P}}
  \ar[d]_{u \times_{\cal{E}} v}
&
  \cal{K}
  \ar[d]^{w}
\\
  \cal{E}^\to \times_{\cal{E}} \cal{E}^\to
  \ar[r]_{P}
&
  \cal{E}^\to
\rlap{.}}
\end{gathered}
\end{equation}
\end{enumerate}
\end{proposition}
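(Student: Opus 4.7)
The plan is to prove the two implications separately; the second direction is where the Grothendieck fibration hypothesis is used.

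For (i) $\Rightarrow$ (ii), there is a canonical functor $F \co \cal{I} \times_\cal{E} \cal{J} \to \cal{I}_{/\cal{E}} \times_\cal{E} \cal{J}$ sending a pair $(i, j)$ with common codomain $X = B_i = \cod(v_j)$ to $(X, i, u_i, \id_X, j)$, viewing $u_i$ trivially as an arrow over its own codomain. Composing with the hypothesized $\tilde{Q}$ and the evident forgetful projection $\cal{K}_{/\cal{E}} \to \cal{K}$, $(Y, k, a', b') \mapsto k$, yields a candidate lift $\tilde{P}$. A direct comparison of the two pullback operations $P$ and $Q$ confirms that $\tilde{P}$ sits above $P$ as required.

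For (ii) $\Rightarrow$ (i), given $(X, i, a, b, j)$ in $\cal{I}_{/\cal{E}} \times_\cal{E} \cal{J}$, we use a chosen cleavage of the Grothendieck fibration $\cod_{\cal{E}} \cc v$ to pick a cartesian lift $\alpha \co j' \to j$ in $\cal{J}$ above $b \co B_i \to X$. Since $v$ is a morphism of Grothendieck fibrations, $v(\alpha)$ is a pullback square in $\cal{E}^\to$, so $v_{j'}$ is the pullback of $v_j$ along $b$ and has codomain $B_i$. This construction defines a functor $G \co \cal{I}_{/\cal{E}} \times_\cal{E} \cal{J} \to \cal{I} \times_\cal{E} \cal{J}$, which we compose with $\tilde{P}$ to obtain $k \in \cal{K}$ with $w_k = v_{j'}^*(u_i)$. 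By pullback pasting, using $a = b \cc u_i$, this arrow is canonically identified with $v_j^*(u_i) \co v_j^*(A_i) \to v_j^*(B_i)$; we then package this together with the pullbacks of $a$ and $b$ along $v_j$ to define $\tilde{Q}(X, i, a, b, j) := (\dom(v_j), k, v_j^*(a), v_j^*(b))$. The commutativity $v_j^*(a) = v_j^*(b) \cc w_k$ follows by pulling back $a = b \cc u_i$, so this really is an object of $\cal{K}_{/\cal{E}}$, and by construction it lies above $Q(X, u_i, a, b, v_j)$.

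The main subtlety is verifying functoriality of $\tilde{Q}$. Given a morphism in $\cal{I}_{/\cal{E}} \times_\cal{E} \cal{J}$, one compares the two chosen cartesian lifts using the universal property of cartesian morphisms: this produces a canonical morphism in $\cal{I} \times_\cal{E} \cal{J}$ whose image under $\tilde{P}$ yields the required morphism in $\cal{K}$. With a fixed cleavage in hand, these comparisons compose strictly, so $G$ is a genuine functor, and functoriality of $\tilde{Q}$ then follows from that of $\tilde{P}$ together with the pseudofunctoriality of pullback along maps in $\cal{E}$, which is strictified by the fixed choices.
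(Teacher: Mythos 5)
Your proof is correct and follows essentially the same route as the paper: one direction by specializing the general lift $\tilde{Q}$ to the case where the structure map $b$ is an identity, the other by using the cartesian lift of $j$ along $b$ (whose image under $v$ is a pullback square, by the morphism-of-fibrations hypothesis) and then applying $\tilde{P}$. Your additional care about functoriality and the coherence of chosen pullbacks only makes explicit details that the paper's proof leaves implicit.
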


\begin{proof}
We will show that giving lifts $\tilde{Q}$ and $\tilde{P}$ as in~\eqref{generalized-uniform-frob:lift} and \eqref{generalized-uniform-frob-nicer:lift}, respectively, is equivalent.
The situation \eqref{generalized-uniform-frob:lift} a priori represents a more general scenario: we are pulling back an arrow over an object $Y$,
\[
\xymatrix{
  A
  \ar[rr]^{u_i}
  \ar[dr]
&&
  B
  \ar[dl]
\\&
  Y
\rlap{,}}
\]
with $i \in \cal{I}$, along a map $v_j \co X \to Y$ with $j \in \cal{J}$.
In contrast, in \eqref{generalized-uniform-frob-nicer:lift} we restrict to the case that the map $B \to Y$ is the identity.
It follows that any lift $\tilde{Q}$ induces a lift $\tilde{P}$.
However, under the stated assumption that \eqref{uniform-frob:comprehension} is a morphism of Grothendieck fibrations, the lift $\tilde{Q}$ can be reconstructed from the lift $\tilde{P}$ by first pulling back $v_j$ along $B \to Y$ to a map $v_{j'} \co D \to B$ with $j' \to j$ a map in $\cal{J}$ in the above situation and then setting $\tilde{Q}(i, j) \defeq \tilde{P}(i, j')$.
\end{proof}

We can now restate the functorial Frobenius condition of~\cref{functorial-frobenius} equivalently in terms of the generalized functorial Frobenius condition of~\cref{generalized-uniform-frob}.

\begin{proposition} \label{thm:alt-frob}
Let $(\mathsf{L}, \mathsf{R})$ be an awfs on $\cal{E}$.
Then the following are equivalent:
\begin{enumerate}[(i)]
\item $(\LL, \RR)$ satisfies the functorial Frobenius condition,
\item $(\pcoalg{\LL}, \palg{\RR}, \liftl{(\liftr{\pcoalg{\LL}})})$ satisfies the generalized functorial Frobenius condition.
\end{enumerate}
\end{proposition}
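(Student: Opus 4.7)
The plan is to derive the equivalence by first using \cref{generalized-uniform-frob-nicer} to eliminate the slicing on both sides, and then comparing the resulting ``plain'' lifts over $\pcoalg{\LL} \times_{\cal{E}} \palg{\RR}$ that differ only in their codomain ($\pcoalg{\LL}$ versus $\liftl{(\liftr{\pcoalg{\LL}})}$). The hypothesis of \cref{generalized-uniform-frob-nicer} holds for $\palg{\RR}$ because, being a category of pointed endofunctor algebras for an awfs, its codomain functor $\palg{\RR} \to \cal{E}$ is a Grothendieck fibration: algebra structures pull back along cartesian squares in $\cal{E}^\to$. Hence condition~(ii) is equivalent to the existence of a lift of $P$
\[
\xymatrix@C+2em{
  \pcoalg{\LL} \times_{\cal{E}} \palg{\RR}
  \ar@{.>}[r]
  \ar[d]
&
  \liftl{(\liftr{\pcoalg{\LL}})}
  \ar[d]
\\
  \cal{E}^\to \times_{\cal{E}} \cal{E}^\to
  \ar[r]_{P}
&
  \cal{E}^\to
\rlap{.}}
\]

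The direction (i)~$\Rightarrow$~(ii) is then immediate: postcompose the lift provided by~(i) with the unit $\pcoalg{\LL} \to \liftl{(\liftr{\pcoalg{\LL}})}$ of the adjunction~\eqref{garner-adjunction}.

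The main content of the proof lies in (ii)~$\Rightarrow$~(i), which I will obtain by constructing a functor $\liftl{(\liftr{\pcoalg{\LL}})} \to \pcoalg{\LL}$ over $\cal{E}^\to$ with which to postcompose the lift from~(ii). The construction relies on the observation that a $\pcoalg{\LL}$-coalgebra structure on an arrow $f$ is the same data as a diagonal filler in the canonical square
\[
\xymatrix@C+2em{
  \dom(f)
  \ar[r]^-{\LL f}
  \ar[d]_{f}
&
  M f
  \ar[d]^{\RR f}
\\
  \cod(f)
  \ar@{=}[r]
&
  \cod(f)
\rlap{.}}
\]
Given a $\liftl{(\liftr{\pcoalg{\LL}})}$-structure on $f$, this square becomes a lifting problem of the required form as soon as $\RR f$ is regarded as an object of $\liftr{\pcoalg{\LL}}$. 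I expect the subtle step to be identifying the right structure on $\RR f$: the monad multiplication $\mu_f \co \RR \RR f \to \RR f$ equips $\RR f$ with a canonical $\pAlg{\RR}$-algebra structure, and the identification $\pcoalg{\LL} = \liftl{\pAlg{\RR}}$ combined with the adjunction~\eqref{garner-adjunction} yields a canonical comparison $\pAlg{\RR} \to \liftr{(\liftl{\pAlg{\RR}})} = \liftr{\pcoalg{\LL}}$. Applying the given $\liftl{(\liftr{\pcoalg{\LL}})}$-structure of $f$ to the canonical square, with $\RR f$ carrying the $\liftr{\pcoalg{\LL}}$-structure so obtained, produces the desired diagonal filler and hence the $\pcoalg{\LL}$-coalgebra structure. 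Functoriality of this assignment in morphisms of $\cal{E}^\to$ follows from the naturality built into the $\liftl{(\liftr{\pcoalg{\LL}})}$-structure together with the coherence of the monad multiplication, so that the resulting $\pcoalg{\LL}$-coalgebra structure on the pullback $h^*(g)$ is natural in the original data, giving the lift required by~(i).
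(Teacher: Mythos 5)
Your argument is correct and follows essentially the same route as the paper: both conditions are reduced (via \cref{generalized-uniform-frob-nicer}, whose Grothendieck-fibration hypothesis for $\palg{\RR}$ you rightly check) to plain lifts of $P$, and then compared through the functors $\pcoalg{\LL} \leftrightarrow \liftl{(\liftr{\pcoalg{\LL}})}$ over $\cal{E}^\to$. Your hand-built retraction $\liftl{(\liftr{\pcoalg{\LL}})} \to \pcoalg{\LL}$ --- solving the canonical square against the free algebra $(\RR f, \mu_f)$ --- is just an explicit unfolding of the comparison the paper obtains directly from the adjunction~\eqref{garner-adjunction} together with $\pcoalg{\LL} = \liftl{\pAlg{\RR}}$ and \cref{generalized-uniform-frob-functorial}.
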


\begin{proof}
Recall that $\pcoalg{\LL} = \liftl{\pAlg{\RR}}$ is a category of left maps.
We claim that both~(i) and~(ii) are equivalent to $(\pcoalg{\LL}, \palg{\RR}, \pcoalg{\LL})$ satisfying the generalized functorial Frobenius condition.
For the equivalence with~(i), use \cref{generalized-uniform-frob-nicer}.
For the equivalence with~(ii), use \cref{generalized-uniform-frob-functorial}, noting that $\pcoalg{\LL} \leftrightarrow \liftl{(\liftr{\pcoalg{\LL}})}$ over $\cal{E}^\to$, using the adjunction~\eqref{garner-adjunction}.
\end{proof}

The benefit of the reformulation in~\cref{thm:alt-frob} of the functorial Frobenius condition as an instance of the generalized functorial Frobenius condition is that the latter admits an equivalent rephrasing in terms of pushforward, rather than pullback, functors.
This rephrasing, which does not seem directly possible for the functorial Frobenius condition, will be essential to our characterization of algebraically-free awfs's satisfying the functorial Frobenius condition.
As a first step towards the pushforward formulation, we decompose the generalized functorial Frobenius condition into an object part and a morphism part.
For this, recall that any arrow $f \co X \to Y$ induces an adjunction~$f_{!} \dashv f^*$ between the slices $\cal{E}_{/X}$ and $\cal{E}_{/Y}$ given by left composition~$f_{!}$ and pullback~$f^*$.
Given a functor $u \co \cal{I} \to \cal{E}^\to$, it is immediate to check that the left composition functor lifts as follows:
\[
\xymatrix@C+1em{
  \cal{I}_{/X}
  \ar[r]^-{\widetilde{f_!}}
  \ar[d]_{u_{/X}}
&
  \cal{I}_{/Y}
  \ar[d]^{u_{/Y}}
\\
  \calE_{/X}^\to
  \ar[r]_-{f_!}
&
  \calE_{/Y}^\to
\rlap{.}}
\]
Note that we can view the bottom map as a lift of $f_! \co \cal{E}_{/X} \to \cal{E}_{/Y}$ to arrow categories.
Composing with the unit $\cal{I}_{/Y} \to \liftl{(\liftr{(\cal{I}_{/Y})})}$ of the adjunction~\eqref{garner-adjunction}, applying \cref{lift-of-adjunction}, and using \cref{pitchfork-slicing} to commute orthogonality functors with slicing, the pullback functor $f^* \co \cal{E}_{/Y} \to \cal{E}_{/X}$ then lifts to slices of the right orthogonality categories,
\[
\xymatrix@C=1.5cm{
  \liftr{\cal{I}}_{/Y}
  \ar[d]_{{\liftr{u}}_{/Y}}
  \ar[r]^{\widetilde{f^*}}
&
  \liftr{\cal{I}}_{/X}
  \ar[d]^{{\liftr{u}}_{/X}}
\\
  {\cal{E}}_{/Y}^\to
  \ar[r]_{f^*}
&
  \cal{E}_{/X}^\to
\rlap{.}}
\]

\begin{proposition} \label{def:uniFrobcond}
Let $u \co \cal{I} \to \cal{E}^\to$, $v \co \cal{J} \to \cal{E}^\to$, $w \co \cal{K} \to \cal{E}^\to$.
Then $(\cal{I}, \cal{J}, \liftl{(\liftr{\cal{K}})})$ satisfies the generalized functorial Frobenius property if and only if the following hold:
\begin{enumerate}[(i)]
\item for every $j \in \cal{J}$, pullback along $v_j \co C_j \to D_j$ lifts to a functor
\[
\xymatrix@C+3em{
  \cal{I}_{/Y}
  \ar[r]^-{\widetilde{v_j^*}}
  \ar[d]_-{u_{/D_j}}
&
  \liftl{(\liftr{\cal{K}})}_{/C_j}
  \ar[d]^-{\liftl{(\liftr{w})}_{/C_j}}
\\
  \cal{E}_{/D_j}^\to \ar[r]_-{v_j^*}
&
  \cal{E}_{/C_j}^\to
\rlap{.}}
\]
\item for every $\tau \co j \to k$ in $\cal{J}$, the square
\begin{equation} \label{def:beck-chevalley:0}
\begin{gathered}
\xymatrix@C+3em{
  C_j
  \ar[r]^{v_j}
  \ar[d]_{s}
&
  D_j
  \ar[d]^{t}
\\
  C_{k}
  \ar[r]_-{v_{k}}
&
  D_{k}
}
\end{gathered}
\end{equation}
induced by $v_\tau \co v_j \to v_k$ is such that the canonical natural transformation
\[
\xymatrix@C+3em{
  \cal{E}^\to_{/D_j}
  \ar[d]_{t_!}
  \ar[r]^{v_j^*}
  \ar@{}[dr]|{\textstyle\Downarrow \rlap{$\labelstyle\phi$}}
&
  \cal{E}^\to_{/C_j}
  \ar[d]^{s_!}
\\
  \cal{E}^\to_{/D_k}
  \ar[r]_{v_k^*}
&
  \cal{E}^\to_{/C_k}
}
\]
lifts to a natural transformation
\[
\xymatrix@C+3em{
  \cal{I}_{/D_j}
  \ar[r]^{\widetilde{v_j^*}}
  \ar[d]_{\widetilde{t_!}}
  \ar@{}[dr]|{\textstyle\Downarrow \rlap{$\labelstyle\tilde{\phi}$}}
&
  \liftl{(\liftr{\cal{K}})}_{/C_j}
  \ar[d]^{\widetilde{s_!}}
\\
  \cal{I}_{/D_k}
  \ar[r]_{\widetilde{v_k^*}}
&
  \liftl{(\liftr{\cal{K}})}_{/C_k}
\rlap{.}}
\]
\end{enumerate}
\end{proposition}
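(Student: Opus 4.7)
The plan is to show directly that a single lift $\tilde{Q}$ witnessing the generalized functorial Frobenius condition for $(\cal{I}, \cal{J}, \liftl{(\liftr{\cal{K}})})$ decomposes precisely into the data of~(i) and~(ii). As a preliminary step, I would invoke \cref{pitchfork-slicing-monad}(i) to commute the monad $\liftl{(\liftr{(-)})}$ with slicing, so that $\liftl{(\liftr{\cal{K}})}_{/C_j}$ is canonically identified with $\liftl{(\liftr{(\cal{K}_{/C_j})})}$. This makes the lifts appearing in~(i) and~(ii) well-defined categories of left maps over the relevant slices.

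Given a lift $\tilde{Q}$, I would obtain~(i) by restricting to the subcategory of $\cal{I}_{/\cal{E}} \times_{\cal{E}} \cal{J}$ spanned by morphisms whose $\cal{J}$-component is the identity at a fixed $j$: the result is a functor $\cal{I}_{/D_j} \to \liftl{(\liftr{\cal{K}})}_{/C_j}$ lifting $v_j^*$. For~(ii), any $\tau \co j \to k$ in $\cal{J}$ induces, for each $i \in \cal{I}_{/D_j}$, a canonical morphism $(i, j) \to (t_! i, k)$ in $\cal{I}_{/\cal{E}} \times_{\cal{E}} \cal{J}$ whose image under $\tilde{Q}$ is a morphism in $\liftl{(\liftr{\cal{K}})}_{/\cal{E}}$ whose underlying map in $\cal{E}^\to$ is the Beck--Chevalley comparison $s_! v_j^*(u_i) \to v_k^* t_!(u_i)$ attached to the square~\eqref{def:beck-chevalley:0}. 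Letting $i$ vary and using functoriality of $\tilde{Q}$, these components assemble into the required natural transformation $\tilde{\phi}$.

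Conversely, starting from (i) and (ii) I would define $\tilde{Q}$ on objects via the per-$j$ functors and on a general morphism $(i, j) \to (i', k)$ with underlying $\cal{J}$-morphism $\tau$ by factoring it as the canonical morphism $(i, j) \to (t_! i, k)$ followed by a morphism $(t_! i, k) \to (i', k)$ internal to $\cal{I}_{/D_k}$, applying the component of $\tilde{\phi}_\tau$ and the per-$k$ functor from~(i), respectively. The main obstacle I anticipate is verifying that the resulting $\tilde{Q}$ is a functor, i.e.\ that the construction is independent of the factorization and preserves composition of arbitrary $\cal{J}$-morphisms. This reduces to the coherence of the Beck--Chevalley transformations under horizontal composition of squares of the form~\eqref{def:beck-chevalley:0}, together with the naturality of $\tilde{\phi}$ required by~(ii); the needed diagram chasing is routine but bookkeeping-heavy.
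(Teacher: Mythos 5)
Your proposal is correct and takes essentially the same route as the paper: the lift $\tilde{Q}$ is analyzed by restricting to the fibre over a fixed $j \in \cal{J}$ (giving (i)) and by its action on the canonical morphisms $(i, j) \to (t_!\, i, k)$ lying over $\tau \co j \to k$ (giving (ii)), with a general morphism recovered via the factorization you describe. The one point you miss is the paper's simplifying observation that $\liftl{(\liftr{w})} \co \liftl{(\liftr{\cal{K}})} \to \cal{E}^\to$ is faithful, so that being a morphism of left maps is a property of the underlying square and lifts of morphisms are unique when they exist; consequently the functoriality and independence-of-factorization checks you anticipate as ``bookkeeping-heavy'' are automatic (they hold downstairs because $Q$ is a functor), and no Beck--Chevalley coherence chase is needed.
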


\begin{proof}
We consider the lift $\widetilde{Q}$ of $Q$ in the generalized functorial Frobenius condition~\eqref{generalized-uniform-frob:lift} for~$(\cal{I}, \cal{J}, \liftl{(\liftr{\cal{K}})})$.
Since the functor $\liftl{(\liftr{w})} \co \liftl{(\liftr{\cal{K}})} \to \cal{E}^\to$ is faithful, the lift~$\widetilde{Q}$ consists just of a lift of the action of $Q$ on objects that is coherent (in an evident sense) with respect to the action of~$Q$ on morphisms, in the sense that the action of $Q$ on morphisms then determines the action of~$\widetilde{Q}$ on morphisms.
Coherence in morphisms in $\cal{I}_{/\cal{E}} \times_{\cal{E}} \cal{J}$ of the action on objects of $\widetilde{Q}$ separates into two parts:
\begin{enumerate}[(1)]
\item coherence in morphisms in $\cal{I}_{/D_j}$ for fixed $v_j \co C_j \to D_j$ with $j \in \cal{J}$,
\item coherence in morphisms in $\cal{J}$.
\end{enumerate}
The action of $\widetilde{Q}$ on objects together with coherence~(1) constitutes part~(i).
Coherence~(2) constitutes part~(ii).
\end{proof}

Note that if the commutative square~\eqref{def:beck-chevalley:0} is a pullback, then the canonical natural transformation~$\phi$ is an isomorphism by the usual Beck-Chevalley condition, and so is~$\tilde{\phi}$ since $\liftl{(\liftr{u})}$ reflects isomorphisms.

\cref{def:uniFrobcond} has an immediate dual, expressed in terms of pushforward rather than pullback, which we state next.

\begin{proposition} \label{lift-pushforward}
Let $u \co \cal{I} \to \cal{E}^\to$, $v \co \cal{J} \to \cal{E}^\to$, $w \co \cal{K} \to \cal{E}^\to$.
Then $(\cal{I}, \cal{J}, \liftl{(\liftr{\cal{K}})})$ satisfies the generalized Frobenius property if and only if the following hold:
\begin{enumerate}[(i)]
\item for every $j \in \cal{J}$, pushforward along $v_j \co C_j \to D_j$ lifts to a functor
\[
\xymatrix@C=1.5cm{
  {\liftr{\cal{K}}}_{/C_j}
  \ar[r]^{\widetilde{(v_j)_*}}
  \ar[d]_{\liftr{w}_{/C_j}}
&
  {\liftr{\cal{I}}}_{/D_j}
  \ar[d]^{{\liftr{u}}_{/D_j}}
\\
  \cal{E}_{/C_j}^\to
  \ar[r]_{(v_j)_*}
&
  \cal{E}_{/D_j}^\to
\rlap{.}}
\]
\item for every $\tau \co j \to k$, the square~~\eqref{def:beck-chevalley:0} induced by $v_\tau \co v_j \to v_k$ is such that the canonical natural transformation
\[
\xymatrix@C+2em{
  \cal{E}^\to_{/C_k}
  \ar[r]^{(v_k)_*}
  \ar[d]_{s^*}
  \ar@{}[dr]|{\textstyle\Downarrow \rlap{$\labelstyle\psi$}}
&
  \cal{E}^\to_{/D_k}
  \ar[d]^{t^*}
\\
  \cal{E}^\to_{/C_j}
  \ar[r]_{(v_j)_*}
&
  \cal{E}^\to_{/D_j}
}
\]
lifts to a natural transformation
\[
\xymatrix@C+2em{
  {\liftr{\cal{K}}}_{/C_k}
  \ar[r]^{\widetilde{(v_k)_*}}
  \ar[d]_{\widetilde{s^*}}
  \ar@{}[dr]|{\textstyle\Downarrow \rlap{$\labelstyle\tilde{\psi}$}}
&
  {\liftr{\cal{I}}}_{/D_k}
  \ar[d]^{\widetilde{t^*}}
\\
  {\liftr{\cal{K}}}_{/C_j}
  \ar[r]_{\widetilde{(v_j)_*}}
&
  {\liftr{\cal{I}}}_{/D_j}
\rlap{.}}
\]
\end{enumerate}
\end{proposition}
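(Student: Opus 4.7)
The plan is to deduce this proposition directly from \cref{def:uniFrobcond}, by dualizing each of its two parts via the adjunctions $v_j^* \dashv (v_j)_*$ and their mates, using the machinery developed in \cref{sec:ortf}. Since both propositions characterize the same condition (the generalized Frobenius property for the triple $(\cal{I}, \cal{J}, \liftl{(\liftr{\cal{K}})})$), it suffices to show that parts~(i) and~(ii) of the present statement are equivalent, respectively, to parts~(i) and~(ii) of \cref{def:uniFrobcond}.

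For part~(i), I would fix $j \in \cal{J}$ and apply \cref{lift-of-adjunction} to the adjunction $v_j^* \dashv (v_j)_*$ between $\cal{E}_{/D_j}$ and $\cal{E}_{/C_j}$. Viewing $v_j^*$ as a left adjoint, \cref{lift-of-adjunction} yields a bijective correspondence between liftings of $v_j^*$ along the forgetful functor $\liftl{(\liftr{\cal{K}}_{/C_j})} \to \cal{E}_{/C_j}^\to$ and liftings of $(v_j)_*$ along the forgetful functor $\liftr{(\cal{I}_{/D_j})} \to \cal{E}_{/D_j}^\to$. To match the precise shape of the statements, I would then rewrite $\liftr{(\cal{I}_{/D_j})} = \liftr{\cal{I}}_{/D_j}$ via \cref{pitchfork-slicing}(i), and $\liftl{(\liftr{\cal{K}}_{/C_j})} = \liftl{(\liftr{\cal{K}})}_{/C_j}$ by combining \cref{pitchfork-slicing}(i) with \cref{pitchfork-slicing-monad}(i). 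Under these identifications, the two sides of the correspondence become exactly the lifts demanded in condition~(i) of \cref{def:uniFrobcond} and in condition~(i) of the present statement.

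For part~(ii), the key observation is that the natural transformation $\psi \co t^* \cc (v_k)_* \Rightarrow (v_j)_* \cc s^*$ of the statement is precisely the mate of $\phi \co s_! \cc v_j^* \Rightarrow v_k^* \cc t_!$ from \cref{def:uniFrobcond}, formed via the adjunctions $v_j^* \dashv (v_j)_*$, $v_k^* \dashv (v_k)_*$, $s_! \dashv s^*$, and $t_! \dashv t^*$. Applying \cref{lift-of-mates} to the lifts of left and right adjoints established in the previous step, the lift $\widetilde{\phi}$ corresponds bijectively to a lift $\widetilde{\psi}$ of the desired form (again rewriting slices past orthogonality functors as above). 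Combining the two steps then gives the full equivalence.

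The proof is essentially formal bookkeeping: the real work is encapsulated in \cref{lift-of-adjunction}, \cref{lift-of-mates}, and the commutation of slicing with left and right orthogonality. The only mildly delicate point, which I would verify explicitly via the standard diagrammatic definition of mates, is that the canonical natural transformation $\psi$ associated to the square~\eqref{def:beck-chevalley:0} (as arising from units and counits of the pullback/pushforward adjunctions) coincides with the mate of the canonical $\phi$ arising on the pullback side; this is routine but worth spelling out in order to ensure that the two formulations of the Beck–Chevalley-type condition are genuinely the same.
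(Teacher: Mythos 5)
Your proposal is correct and takes essentially the same route as the paper: part (i) by applying \cref{lift-of-adjunction} to the adjunction $v_j^* \dashv (v_j)_*$ with the slicing/orthogonality commutation of \cref{pitchfork-slicing,pitchfork-slicing-grothendieck}, and part (ii) by applying \cref{lift-of-mates} to the composite adjunctions, using that $\phi$ and $\psi$ are mates.
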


\begin{proof}
For (i), note that we can view the bottom map in part (i) of~\cref{def:uniFrobcond} as a lift of~$v_j^* \co \cal{E}_{/D_j} \to \cal{E}_{/C_j}$ to arrow categories, so we can apply \cref{lift-of-adjunction} to the adjunction $v_j^* \dashv (v_j)_*$ with $u = u_{/D_j}$ and $v = {\liftr{w}}_{/C_j}$, using \cref{pitchfork-slicing-grothendieck} to permute slicing and orthogonality functors.
For (ii), apply \cref{lift-of-mates} to the adjunctions $s_! v_j^* \dashv (v_j)_* s^*$ and $v_k^* t_! \dashv t^* (v_k)_*$ recalling that $\phi$ and $\psi$ are mates.
\end{proof}

\begin{proposition} \label{double-pitchfork-in-generalized-uniform-frob}
Let $u \co \cal{I} \to \cal{E}^\to$ and $v \co \cal{J} \to \cal{E}^\to$.
Then the following are equivalent:
\begin{enumerate}[(i)]
\item $(\cal{I}, \cal{J}, \liftl{(\liftr{\cal{I}})})$ satisfies the generalized functorial Frobenius condition,
\item $(\liftl{(\liftr{\cal{I}})}, \cal{J}, \liftl{(\liftr{\cal{I}})})$ satisfies the generalized functorial Frobenius condition.
\end{enumerate}
\end{proposition}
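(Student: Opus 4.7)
The proof splits into two implications, of which one is much easier than the other.

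The implication (ii) $\Rightarrow$ (i) follows immediately from \cref{generalized-uniform-frob-functorial}, applied with identity functors in the $\cal{J}$- and $\cal{K}$-slots and with the unit $\eta_{\cal{I}} \co \cal{I} \to \liftl{(\liftr{\cal{I}})}$ of the pitchfork adjunction~\eqref{garner-adjunction} in the $\cal{I}$-slot (contravariance in $\cal{I}$ is what makes this work).

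For the converse (i) $\Rightarrow$ (ii), observe that \cref{generalized-uniform-frob-functorial} cannot be invoked directly, since doing so would require a functor $\liftl{(\liftr{\cal{I}})} \to \cal{I}$ over $\cal{E}^\to$, which has no reason to exist. Instead, I would translate both conditions into the pushforward form provided by \cref{lift-pushforward}, making the choice $\cal{K} = \cal{I}$ in each case; this is legitimate for both triples, since $\liftl{(\liftr{\cal{I}})}$ appears in the third slot of both. Condition (i) then becomes, for each $j \in \cal{J}$, a lift
\[
\widetilde{(v_j)_*} \co \liftr{\cal{I}}_{/C_j} \to \liftr{\cal{I}}_{/D_j}
\]
of the pushforward $(v_j)_*$ along the forgetful functors, together with a Beck--Chevalley coherence in morphisms $\tau \co j \to k$ of $\cal{J}$. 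Condition (ii) becomes analogous lifts
\[
\liftr{\cal{I}}_{/C_j} \to \liftr{(\liftl{(\liftr{\cal{I}})})}_{/D_j}
\]
with corresponding coherence (the domain is the same, only the codomain category changes).

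The remaining step is to post-compose the lifts from (i) with the slice $(\varepsilon_{\liftr{\cal{I}}})_{/D_j}$ of the counit
\[
\varepsilon_{\liftr{\cal{I}}} \co \liftr{\cal{I}} \to \liftr{(\liftl{(\liftr{\cal{I}})})}
\]
of~\eqref{garner-adjunction}, which is a functor over $\cal{E}^\to$. This directly produces functors of the shape required by (ii), and they lift $(v_j)_*$ because $\varepsilon_{\liftr{\cal{I}}}$ commutes with the forgetful functors to $\cal{E}^\to$. The main technical point I expect to require care is verifying that this post-composition transports the Beck--Chevalley coherence of (i) into that of (ii); however, because $\varepsilon_{\liftr{\cal{I}}}$ is a single fixed functor (independent of $j$) and the mate constructions appearing in part~(ii) of \cref{lift-pushforward} are assembled componentwise, this transfer is essentially formal from the naturality of the counit in its argument.
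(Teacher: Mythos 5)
Your proposal is correct and follows essentially the same route as the paper: (ii)$\Rightarrow$(i) via \cref{generalized-uniform-frob-functorial} with the unit $\eta_{\cal{I}} \co \cal{I} \to \liftl{(\liftr{\cal{I}})}$ of the adjunction~\eqref{garner-adjunction}, and (i)$\Rightarrow$(ii) by passing to the pushforward characterizations of \cref{lift-pushforward} and postcomposing the lifted pushforwards with the counit $\liftr{\cal{I}} \to \liftr{(\liftl{(\liftr{\cal{I}})})}$. Your extra remark on transporting the Beck--Chevalley coherence is a detail the paper leaves implicit, and your justification of it is sound.
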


\begin{proof}
Using \cref{generalized-uniform-frob-functorial} and the unit $\cal{I} \to \liftl{(\liftr{\cal{I}})}$ of the adjunction~\eqref{garner-adjunction}, we see that statement~(ii) implies statement~(i).
For the converse direction, we use the characterizations of statements~(i) and~(ii) provided by \cref{lift-pushforward}.
The lift of pushforward for~(i) can be composed with the counit $\liftr{\cal{I}} \to \liftr{(\liftl{(\liftr{\cal{I}})})}$ of the adjunction~\eqref{garner-adjunction} to induce a lift of pushforward for~(ii).
\end{proof}

We can finally give the desired characterization of the functorial Frobenius condition for an algebraically-free awfs.

\begin{theorem} \label{thm:frobenius-comparison}
Let $(\LL, \RR)$ be an awfs algebraically-free on $u \co \cal{I} \to \cal{E}^\to$.
Then the following are equivalent:
\begin{enumerate}[(i)]
\item $(\LL, \RR)$ satisfies the functorial Frobenius condition,
\item $(\cal{I}, \liftr{\cal{I}}, \liftl{(\liftr{\cal{I}})}) = (\cal{I}, \pAlg{\RR}, \pcoalg{\LL})$ satisfies the generalized functorial Frobenius condition.
\end{enumerate}
\end{theorem}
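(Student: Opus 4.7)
The plan is to unwind condition~(i) using~\cref{thm:alt-frob}, exploit that $(\LL, \RR)$ is algebraically-free on $\cal{I}$ in order to rewrite $\palg{\RR}$ and $\pcoalg{\LL}$ in terms of $\cal{I}$, and then pass between the resulting instance of the generalized functorial Frobenius condition and condition~(ii) by a formal manipulation using~\cref{generalized-uniform-frob-functorial}, \cref{double-pitchfork-in-generalized-uniform-frob}, and the unit and triangle identities of the adjunction~\eqref{garner-adjunction}.

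Algebraic freeness gives $\palg{\RR} = \pAlg{\RR} \iso \liftr{\cal{I}}$ and $\pcoalg{\LL} = \liftl{\pAlg{\RR}} = \liftl{(\liftr{\cal{I}})}$. By~\cref{thm:alt-frob}, condition~(i) is thus equivalent to the generalized functorial Frobenius condition for the triple $\bigl(\liftl{(\liftr{\cal{I}})},\, \liftr{\cal{I}},\, \liftl{(\liftr{(\liftl{(\liftr{\cal{I}})})})}\bigr)$. The task therefore reduces to showing that this is equivalent to condition~(ii), which concerns $\bigl(\cal{I},\, \liftr{\cal{I}},\, \liftl{(\liftr{\cal{I}})}\bigr)$.

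For the direction (i) $\Rightarrow$ (ii), I would apply~\cref{generalized-uniform-frob-functorial} with the unit $\eta_\cal{I} \co \cal{I} \to \liftl{(\liftr{\cal{I}})}$ in the first coordinate, the identity in the second coordinate, and, in the third coordinate, the retraction $\liftl{\varepsilon_{\liftr{\cal{I}}}} \co \liftl{(\liftr{(\liftl{(\liftr{\cal{I}})})})} \to \liftl{(\liftr{\cal{I}})}$ of the unit $\eta_{\liftl{(\liftr{\cal{I}})}}$ obtained from the triangle identity of~\eqref{garner-adjunction}. For the reverse direction, I would first invoke~\cref{double-pitchfork-in-generalized-uniform-frob} with $\cal{J} = \liftr{\cal{I}}$ to promote (ii) to the statement that $\bigl(\liftl{(\liftr{\cal{I}})},\, \liftr{\cal{I}},\, \liftl{(\liftr{\cal{I}})}\bigr)$ satisfies the generalized functorial Frobenius condition, and then apply~\cref{generalized-uniform-frob-functorial} once more with identities in the first two coordinates and the unit $\eta_{\liftl{(\liftr{\cal{I}})}}$ in the third to obtain the condition for $\bigl(\liftl{(\liftr{\cal{I}})},\, \liftr{\cal{I}},\, \liftl{(\liftr{(\liftl{(\liftr{\cal{I}})})})}\bigr)$, which is exactly the reformulation of~(i).

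The one delicate point is keeping track of variance: \cref{generalized-uniform-frob-functorial} is covariant in its first two coordinate arguments but contravariant in the third, so the unit of the adjunction can be used directly only in the first coordinate, and for the third coordinate in the $(i) \Rightarrow (ii)$ step one must invoke the triangle identity to extract a retraction going in the required direction. Beyond this bookkeeping, there is no further technical content: the proof is a purely formal assembly of the lemmas already established in~\cref{sec:ortf} and the present section.
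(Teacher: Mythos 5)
Your overall strategy---reformulating (i) via \cref{thm:alt-frob} and then shuttling between triples using \cref{generalized-uniform-frob-functorial}, \cref{double-pitchfork-in-generalized-uniform-frob} and the unit/counit of the adjunction~\eqref{garner-adjunction}---is essentially the same formal assembly as the paper's proof, and your handling of the third coordinate (extracting the retraction $\liftl{\varepsilon_{\liftr{\cal{I}}}}$ of the unit from the triangle identity) is correct. However, there is a genuine gap at your very first step: the assertion $\palg{\RR} = \pAlg{\RR} \iso \liftr{\cal{I}}$ is false. Algebraic freeness identifies the \emph{monad} algebras $\pAlg{\RR}$ with $\liftr{\cal{I}}$, whereas the functorial Frobenius condition (\cref{functorial-frobenius}), and hence \cref{thm:alt-frob}, has the \emph{pointed endofunctor} algebras $\palg{\RR}$ in its second coordinate; a pointed endofunctor algebra structure need not satisfy the multiplication axiom, so these categories do not coincide---the paper only ever claims functors back and forth between them, and establishing even that is the entire content of the first paragraph of its proof.

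The reason this matters is the variance you yourself flag: in \cref{generalized-uniform-frob-functorial} the second coordinate is the one along which you may pull back (a functor $\cal{J}_2 \to \cal{J}_1$). For (i) $\Rightarrow$ (ii) you only need the forgetful functor $\pAlg{\RR} \to \palg{\RR}$, which exists trivially; but for (ii) $\Rightarrow$ (i) you need a functor $\palg{\RR} \to \pAlg{\RR} \iso \liftr{\cal{I}}$ over $\cal{E}^\to$, and this is not automatic. The paper supplies it as follows: a pointed endofunctor algebra structure exhibits its underlying map as a retract of a free monad algebra, giving $\palg{\RR} \leftrightarrow \overline{\pAlg{\RR}}$, and \cref{retract-closure} together with~\eqref{garner-adjunction} gives $\overline{\liftr{\cal{I}}} \leftrightarrow \liftr{\cal{I}}$, whence $\palg{\RR} \leftrightarrow \pAlg{\RR}$ over $\cal{E}^\to$. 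With this interchange inserted your argument closes; without it, the passage from your reformulated condition (second slot $\liftr{\cal{I}}$) back to the functorial Frobenius condition (second slot $\palg{\RR}$) is unjustified. Apart from this, your route differs from the paper's only cosmetically: the paper restates (i) directly as a lift over $(\liftl{(\liftr{\cal{I}})}, \liftr{\cal{I}}, \liftl{(\liftr{\cal{I}})})$ via \cref{generalized-uniform-frob-nicer}, which avoids the doubly dualized third slot and the ensuing unit/counit bookkeeping that your detour through \cref{thm:alt-frob} requires.
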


\begin{proof}
Using \cref{retract-closure} and the adjunction~\eqref{garner-adjunction}, we have $\overline{\liftr{\cal{I}}} \leftrightarrow \liftr{\cal{I}}$, hence $\pAlg{\RR} \leftrightarrow \overline{\pAlg{\RR}}$.
Note that $\overline{\pAlg{\RR}} \leftrightarrow \palg{\RR}$ (this is the case for pointed endofunctor and monad algebras over any monad).
It follows that $\pAlg{\RR} \leftrightarrow \palg{\RR}$.

With this, the functorial Frobenius condition for $(\LL, \RR)$ can be stated equivalently as the existence of a lift of the form
\begin{equation} \label{frobenius-comparison:0}
\begin{gathered}
\xymatrix@C+2em{
  \liftl{(\liftr{\cal{I}})} \times_{\cal{E}} \liftr{\cal{I}}
  \ar@{.>}[r]
  \ar[d]
&
  \liftl{(\liftr{\cal{I}})}
  \ar[d]
\\
  \cal{E}^\to \times_{\cal{E}} \cal{E}^\to
  \ar[r]_{P'}
&
  \cal{E}^\to
\rlap{.}}
\end{gathered}
\end{equation}
This amounts to the generalized functorial Frobenius condition for $(\liftl{(\liftr{\cal{I}})}, \liftr{\cal{I}}, \liftl{(\liftr{\cal{I}})})$ by \cref{generalized-uniform-frob-nicer}, which is equivalent to the generalized functorial Frobenius condition for $(\cal{I}, \liftr{\cal{I}}, \liftl{(\liftr{\cal{I}})})$ by \cref{double-pitchfork-in-generalized-uniform-frob}.
\end{proof}

We conclude the section with a result needed in \cref{sec:alg-frob}.

\begin{proposition} \label{generalized-uniform-frob-product-u}
Let $u_t \co \cal{I}_t \to \cal{E}^\to$, $w_t \co \cal{K}_t \to \cal{E}^\to$ for $t \in \braces{1\,, 2}$ and $v \co \cal{J} \to \cal{E}^\to$.
If $(\cal{I}_1, \cal{J}, \cal{K}_1,)$ and $(\cal{I}_2, \cal{J}, \cal{K}_2)$ satisfy the generalized functorial Frobenius property, then so do
\[
(\cal{I}_1 \times_{\cal{E}^\to} \cal{I}_2, \cal{J}, \cal{K}_1 \times_{\cal{E}^\to} \cal{K}_2)
\,, \quad
(\cal{I}_1 +_{\cal{E}^\to} \cal{I}_2, \cal{J}, \cal{K}_1 +_{\cal{E}^\to} \cal{K}_2)
\,.\]
\end{proposition}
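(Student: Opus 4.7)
The plan is to construct the required lifts $\widetilde{Q}$ for the two triples by assembling the given lifts $\widetilde{Q}_1$ and $\widetilde{Q}_2$ via the universal properties of products and coproducts in $\CAT_{/\cal{E}^\to}$, after noting that the forgetful functor $\cal{I}_{/\cal{E}} \to \cal{E}^\to_{/\cal{E}}$ and the pullback functor $Q$ both commute (up to canonical isomorphism) with these (co)limits.

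For the product case, an object of $(\cal{I}_1 \times_{\cal{E}^\to} \cal{I}_2)_{/\cal{E}} \times_{\cal{E}} \cal{J}$ consists of a common underlying arrow $e \in \cal{E}^\to$ equipped with $\cal{I}_1$- and $\cal{I}_2$-structures $i_1, i_2$, together with a $\cal{J}$-map $j$ and the compatibility data placing $e$ over $\cod(v_j)$. Applying the assumed lifts yields objects $\widetilde{Q}_t(i_t, j) \in (\cal{K}_t)_{/\cal{E}}$ for $t \in \braces{1, 2}$ that both lie over the single arrow $Q(e, v_j) \in \cal{E}^\to_{/\cal{E}}$. By the universal property of the pullback $\cal{K}_1 \times_{\cal{E}^\to} \cal{K}_2$ these assemble into a single object of $(\cal{K}_1 \times_{\cal{E}^\to} \cal{K}_2)_{/\cal{E}}$ over $Q(e, v_j)$, and this assignment is functorial in morphisms because both $\widetilde{Q}_1$ and $\widetilde{Q}_2$ are, and the resulting morphism in the pullback is uniquely determined by its projections.

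For the coproduct case, note that $\cal{I}_1 +_{\cal{E}^\to} \cal{I}_2$ is the disjoint union with the evident map to $\cal{E}^\to$, so any object of $(\cal{I}_1 +_{\cal{E}^\to} \cal{I}_2)_{/\cal{E}} \times_{\cal{E}} \cal{J}$ lies in exactly one of the summands $(\cal{I}_t)_{/\cal{E}} \times_{\cal{E}} \cal{J}$, and similarly for morphisms. We simply apply $\widetilde{Q}_t$ in each case and compose with the evident inclusion into $(\cal{K}_1 +_{\cal{E}^\to} \cal{K}_2)_{/\cal{E}}$. Compatibility with $Q$ on the underlying arrows is immediate, and functoriality follows from functoriality of each $\widetilde{Q}_t$.

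The argument is essentially formal once one unwinds the definitions; no step poses a real obstacle. The only thing to watch is keeping track of the identification $(\cal{I}_1 \times_{\cal{E}^\to} \cal{I}_2)_{/\cal{E}} \cong (\cal{I}_1)_{/\cal{E}} \times_{\cal{E}^\to_{/\cal{E}}} (\cal{I}_2)_{/\cal{E}}$ (and the analogous identification for coproducts) so that the pullback and coproduct in the target $(\cal{K}_1 \times_{\cal{E}^\to} \cal{K}_2)_{/\cal{E}}$, $(\cal{K}_1 +_{\cal{E}^\to} \cal{K}_2)_{/\cal{E}}$ sit correctly over $\cal{E}^\to_{/\cal{E}}$; this is a direct calculation.
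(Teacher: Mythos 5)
Your proposal is correct and follows essentially the same route as the paper, whose proof is just the terse version of your argument: apply the functoriality of the generalized functorial Frobenius condition (precomposing with the projections, postcomposing with the coproduct inclusions) and the universal properties of products and coproducts in $\CAT_{/\cal{E}^\to}$, using that the constructions $(-)_{/\cal{E}} \times_{\cal{E}} \cal{J}$ and $(-)_{/\cal{E}}$ commute with these (co)limits over $\cal{E}^\to$. Your explicit check of the identifications $(\cal{I}_1 \times_{\cal{E}^\to} \cal{I}_2)_{/\cal{E}} \cong (\cal{I}_1)_{/\cal{E}} \times_{\cal{E}^\to_{/\cal{E}}} (\cal{I}_2)_{/\cal{E}}$ and its coproduct analogue is exactly the point the paper leaves implicit.
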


\begin{proof}
This follows easily from \cref{generalized-uniform-frob-functorial} using the universal property of products and coproducts in the category $\CAT_{/\cal{E}^\to}$.
\end{proof}

This concludes our work on orthogonality functors and the functorial Frobenius condition in general.
In \cref{sec:alg-fib,sec:alg-frob}, we develop the algebraic counterpart of the material in \cref{sec:fib,sec:frob}.

\section{Uniform fibrations}
\label{sec:alg-fib}

This section is devoted to introducing the notion of a uniform fibration, which is the algebraic counterpart of the notion of a fibration introduced in~\cref{thm:fib}, and establishing some basic results about it.
For this, we first introduce suitable awfs's, in complete analogy with the way we defined suitable wfs's in~\cref{thm:suitable-wfs}.
We will use the notation $(\CC, \TF)$ to denote a suitable awfs and let
\begin{equation} \label{equ:Mapmap}
\Cof \defeq \pcoalg{\CC}
\,, \quad
\TrivFib \defeq \pAlg{\TF}
\,.
\end{equation}
We then refer to objects in $\Cof$ as \emph{uniform cofibrations} and objects in $\TrivFib$ as \emph{uniform trivial fibrations}.
Again, we hope that this helps to convey some of the basic intuition motivating our development.
In~\eqref{equ:Mapmap}, the dichotomy between copointed endofunctor coalgebras on one hand and monad algebras on the other is motivated by our technical development: the uniform fibrations introduced in~\cref{def:I-fibration}, which generalize the uniform fibrations in cubical sets in~\cite{cohen-et-al:cubicaltt}, are monad algebras (as shown in \cref{thm:sset-cset-nwfs}) rather than pointed endofunctor algebras.
Note however that we will only consider cofibrantly generated awfs $(\LL, \RR)$, for which there are always functors back and forth between $\pAlg{\RR}$ and $\palg{\RR}$.

\begin{definition} \label{thm:suitable-awfs}
An awfs $(\CC, \TF)$ is said to be \emph{suitable} if the following conditions hold.
\begin{enumerate}[({S}1)]
\item $(\CC, \TF)$ is cofibrantly generated.
\item The functor $\bot \co \cal{E} \to \cal{E}^\to$ mapping $X \in \cal{E}$ to $\bot_X \co 0_{\cal{E}} \to X$ factors through $\Cof$, as in the diagram
\[
\xymatrix@C+3em{
  \cal{E}
  \ar[r]^{\widetilde{\bot}}
  \ar@/_1pc/[dr]_{\bot}
&
  \Cof
  \ar[d]
\\&
  \cal{E}^\to
\rlap{.}}
\]
\item $\Cof$ is closed under pullback, in the sense that the pullback functor $P$ of \cref{sec:frobc} lifts as follows:
\[
\xymatrix@C+3em{
  \Cof \times_{\cal{E}} \cal{E}^\to
  \ar@{.>}[r]^{\widetilde{P}}
  \ar[d]
&
  \Cof
  \ar[d]
\\
  \cal{E}^\to \times_{\cal{E}} \calE^\to
  \ar[r]_{P}
&
  \cal{E}^\to
\rlap{.}}
\]
\item $\Cof$ is closed under Leibniz product with endpoint inclusions, in the sense that the Leibniz product functor lifts as follows for $k \in \braces{0\,,1}$:
\[
\xymatrix@C+3em{
  \Cof
  \ar@{.>}[r]^{\widetilde{\kcyl \hatotimes (-)}}
  \ar[d]
&
 \Cof
  \ar[d]
\\
  \cal{E}^\to
  \ar[r]_{\kcyl \hatotimes (-)}
&
  \cal{E}^\to
\rlap{.}}
\]
\end{enumerate}
\end{definition}

The main difference between the notion of a suitable wfs~(\cref{thm:suitable-wfs}) and that of a suitable awfs (\cref{thm:suitable-awfs}) is that the conditions in the former express closure of the class of left maps under the action of certain functions, while the conditions in the latter express closure of the category of left maps under the action of certain functors.

Note that, by \cref{generalized-uniform-frob-nicer}, condition~(S3) is equivalent to the generalized functorial Frobenius condition for $(\Cof, \cal{E}^\to, \Cof)$.

\begin{example} \label{unif-triv-fib-sset}
As we will see in~\cref{sec:fib-psh}, the categories $\SSet$ and $\CSet$ admit suitable awfs's $(\CC, \TF)$ that are algebraically-free on the categories $\cal{M}$ of all monomorphisms and pullback squares, \ie such that $\TrivFib = \liftr{\cal{M}}$.
The right maps of these awfs's are a natural algebraic counterpart of trivial Kan fibrations and so we call them \emph{uniform trivial Kan fibrations}.
Similarly, we will see in~\cref{justify-sset-cset-examples} that the category $\CSet$ also admits a suitable awfs $(\CC', \TF')$ algebraically-free on the category $\cal{M}'$ of monomorphisms classified by the face lattice $\Phi$ of~\cite{cohen-et-al:cubicaltt} and pullback squares (see \cref{nonalgebraic-cof}).
\end{example}

Let us now fix a suitable awfs $(\CC, \TF)$ cofibrantly generated by a small category of maps $u \co \cal{I} \to \cal{E}^\to$.
We refer to the objects of $\cal{I}$ as \emph{generating uniform cofibrations}.
For $k \in \braces{0\,, 1}$, we define a functor $\kcyl \hatotimes u \co \cal{I} \to \cal{E}^\to$ by letting
\[
(\kcyl \hatotimes u)_i \defeq \kcyl \hatotimes u_i \, ,
\]
where $\kcyl \hatotimes u_i$ is the Leibniz product of $\kcyl$ and $u_i \co A_i \to B_i$, as in~\eqref{equ:leib-prod}.
Now let $\cal{I}_\otimes \defeq \cal{I} + \cal{I}$ and define $u_\otimes \co \cal{I}_\otimes \to \cal{E}^\to$ via the coproduct diagram
\begin{equation} \label{equ:u-tensor}
\begin{gathered}
\xymatrix@C+2em{
  \cal{I}
  \ar[r]^{\iota_0}
  \ar[dr]_-{\lcyl \hatotimes u}
&
  \cal{I}_\otimes
  \ar[d]^(.4){u_\otimes}
&
  \cal{I}
  \ar[dl]^-{\rcyl \hatotimes u}
  \ar[l]_{\iota_1}
\\&
  \cal{E}^\to
\rlap{.}}
\end{gathered}
\end{equation}
We refer to the objects of $\cal{I}_\otimes$ as \emph{generating uniform trivial cofibrations}.

\begin{definition} \label{def:I-fibration} \leavevmode
\begin{enumerate}[(i)]
\item A \emph{uniform fibration} is a right $\cal{I}_\otimes$-map.
\item A \emph{morphism of uniform fibrations} is a morphism of right $\cal{I}_\otimes$-maps.
\end{enumerate}
\end{definition}

We write $\Fib$ for the category of uniform fibrations and their morphisms.
We now give the algebraic analogue of \cref{thm:char-fib-nonalg}, \ie an equivalent characterization of uniform fibrations which does not refer to generating uniform cofibrations.
This will show that the category $\Fib$ is independent from the choice of the functor $u \co \cal{I} \to \cal{E}^\to$ cofibrantly generating $(\CC, \TF)$, and instead depends only on the awfs $(\CC, \TF)$.

\begin{proposition} \label{prod-exp-general}
Uniform fibration structures on $p \in \cal{E}^\to$ are isomorphic to uniform trivial fibration structures on both $\hatexp(\bar{\delta}^0, p)$ and $\hatexp(\bar{\delta}^1, p)$, naturally in $p$.
More precisely, the category $\Fib$ is isomorphic over $\cal{E}^\to$ to the product of the categories $\cal{D}_0$ and $\cal{D}_1$ defined via pullbacks as follows:
\begin{align*}
\xymatrix@C+1em{
  \cal{D}_0
  \ar[r]
  \ar[d]
  \pullback{dr}
&
  \TrivFib
  \ar[d]
\\
  \cal{E}^\to
  \ar[r]_-{\hatexp(\bar{\delta}^0, p)}
&
  \cal{E}^\to
\rlap{,}}
&&
\xymatrix@C+1em{
  \cal{D}_1
  \ar[r]
  \ar[d]
  \pullback{dr}
&
  \TrivFib
  \ar[d]
\\
  \cal{E}^\to
  \ar[r]_-{\hatexp(\bar{\delta}^1, p)}
&
  \cal{E}^\to
\rlap{.}}
\end{align*}
\end{proposition}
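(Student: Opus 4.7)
The plan is to combine three ingredients: the description of $\cal{I}_\otimes$ as a coproduct in $\CAT_{/\cal{E}^\to}$, the fact that the right orthogonality functor sends coproducts to products (since it is a right adjoint), and the Leibniz-adjunction description of \cref{cor-to-most-gen-example}, specialised to the map of adjunctions $\alpha^k \defeq (\delta^k, \bar\delta^k)$ identified in \cref{pitchfork-leibniz-most-general-applied}.

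First I would decompose $\cal{I}_\otimes$. By construction~\eqref{equ:u-tensor}, the object $(\cal{I}_\otimes, u_\otimes)$ is the coproduct of $(\cal{I}, \delta^0 \hatotimes u)$ and $(\cal{I}, \delta^1 \hatotimes u)$ in $\CAT_{/\cal{E}^\to}$. The functor $\liftr{(\arghole)}$ is a right adjoint in~\eqref{garner-adjunction}, so, when viewed as a functor $\CAT_{/\cal{E}^\to}^{\op} \to \CAT_{/\cal{E}^\to}$, it carries coproducts in $\CAT_{/\cal{E}^\to}$ to products in $\CAT_{/\cal{E}^\to}$, which are precisely pullbacks over $\cal{E}^\to$. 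Therefore
\[
\Fib \; = \; \liftr{\cal{I}_\otimes} \; \iso \; \liftr{(\delta^0 \hatotimes u)} \times_{\cal{E}^\to} \liftr{(\delta^1 \hatotimes u)}
\]
as categories over $\cal{E}^\to$.

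Second, for each $k \in \braces{0\,, 1}$ I would identify $\liftr{(\delta^k \hatotimes u)}$ with $\cal{D}_k$. By \cref{pitchfork-leibniz-most-general-applied}, the Leibniz product $\delta^k \hatotimes (\arghole)$ is $\hateval_L(\alpha^k, \arghole)$ and its right adjoint $\hatexp(\bar{\delta}^k, \arghole)$ is $\hateval_R(\alpha^k, \arghole)$. Applying \cref{cor-to-most-gen-example} to $u$ and $\alpha^k$ therefore yields a pullback square
\[
\xymatrix@C=1.5cm{
  \liftr{(\delta^k \hatotimes u)}
  \ar[r]
  \ar[d]
  \pullback{dr}
&
  \liftr{\cal{I}} \; = \; \TrivFib
  \ar[d]
\\
  \cal{E}^\to
  \ar[r]_-{\hatexp(\bar{\delta}^k, \arghole)}
&
  \cal{E}^\to
\rlap{,}}
\]
whose universal property exhibits $\liftr{(\delta^k \hatotimes u)}$ as $\cal{D}_k$ over $\cal{E}^\to$.

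Combining the two steps gives an isomorphism $\Fib \iso \cal{D}_0 \times_{\cal{E}^\to} \cal{D}_1$ over $\cal{E}^\to$; unpacking on the fibre over $p \in \cal{E}^\to$ yields the first sentence of the statement, with the stated naturality in $p$ coming for free from the fact that the whole argument takes place in the slice $\CAT_{/\cal{E}^\to}$. There is no real obstacle: the only thing to check carefully is the matching of adjunctions in \cref{pitchfork-leibniz-most-general-applied}, and the preservation of coproducts by $\liftr{(\arghole)}$, both of which are formal.
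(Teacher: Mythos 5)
Your proposal is correct and follows exactly the paper's own argument: the paper likewise decomposes $\cal{I}_\otimes$ as the coproduct~\eqref{equ:u-tensor}, uses that $\liftr{\brarghole}$, being part of the adjunction~\eqref{garner-adjunction}, sends coproducts to products over $\cal{E}^\to$, and then invokes \cref{cor-to-most-gen-example} via \cref{pitchfork-leibniz-most-general-applied} to identify each factor with $\cal{D}_k$. You have merely spelled out the steps that the paper's proof states in compressed form.
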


\begin{proof}
Recall that $\TrivFib = \liftr{\cal{I}}$ and $\Fib = \liftr{(\cal{I}_\otimes)}$.
Note that the right orthogonality functor is contravariant and part of the adjunction~\eqref{garner-adjunction}, hence sends coproducts to products of categories over $\cal{E}^\to$.
The statement then follows from \cref{cor-to-most-gen-example} via~\cref{pitchfork-leibniz-most-general-applied}.
\end{proof}

We write~$\TrivCof$ for the category of uniform trivial cofibrations, defined as left $\Fib$-maps, \ie
\[
\Fib \defeq \liftr{(\cal{I}_\otimes)}
\,, \quad
\TrivCof \defeq \liftl{\Fib}
\,.\]
The next statement is immediate.

\begin{theorem} \label{thm:sset-cset-nwfs}
The category $\cal{E}$ admits a cofibrantly generated awfs $(\TC, \FF)$ such that
\[
\TrivCof = \pcoalg{\TC}
\,, \quad
\Fib = \pAlg{\FF}
\,.\]
\end{theorem}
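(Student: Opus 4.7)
The plan is to apply Garner's small object argument directly to the functor $u_\otimes \co \cal{I}_\otimes \to \cal{E}^\to$ defined in~\eqref{equ:u-tensor}. The key observation is that $\cal{I}_\otimes = \cal{I} + \cal{I}$ is small, since $\cal{I}$ is small by the assumption that the ambient awfs $(\CC, \TF)$ is cofibrantly generated (condition~(S1) of~\cref{thm:suitable-awfs}). Combined with the standing assumption that $\cal{E}$ is locally presentable, the hypotheses of Garner's small object argument are satisfied, so we obtain a cofibrantly generated awfs $(\TC, \FF)$ on $\cal{E}$ that is algebraically-free on $u_\otimes$, \ie $\pAlg{\FF} \iso \liftr{(\cal{I}_\otimes)}$.

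From here the two required identifications are immediate. By \cref{def:I-fibration}, we have $\Fib = \liftr{(\cal{I}_\otimes)}$, which gives $\pAlg{\FF} \iso \Fib$. For the left class, we use the fundamental identity $\pcoalg{\TC} = \liftl{\pAlg{\FF}}$ recalled at the start of \cref{sec:frobc}, together with the definition $\TrivCof = \liftl{\Fib}$, yielding $\pcoalg{\TC} = \liftl{\pAlg{\FF}} \iso \liftl{\Fib} = \TrivCof$.

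There is no real obstacle in the classical setting: the statement is essentially a restatement of what Garner's small object argument produces when applied to $u_\otimes$, combined with the definitions of $\Fib$ and $\TrivCof$. The only subtlety worth flagging is the one highlighted in the introductory remark on constructivity, namely that Garner's small object argument in its general form invokes classical principles; the constructive validity of the conclusion for the presheaf examples of interest is addressed separately in~\cref{rem:constructive-small-object}.
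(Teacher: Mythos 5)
Your proposal is correct and takes essentially the same route as the paper, which simply invokes Garner's small object argument applied to the generating small category $\cal{I}_\otimes$; the identifications $\pAlg{\FF} \iso \liftr{(\cal{I}_\otimes)} = \Fib$ and $\pcoalg{\TC} = \liftl{\pAlg{\FF}} = \TrivCof$ that you spell out are exactly the content left implicit there. Your closing remark on constructivity also matches the paper's handling of the issue in \cref{rem:constructive-small-object}.
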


\begin{proof}
By Garner's small object argument~\cite{garner:small-object-argument} (see also~\cite[Proposition~16]{bourke-garner-I}), applied to the generating small category of arrows $\cal{I}_\otimes$.
\end{proof}

Note again that there are functors over $\cal{E}^\to$ back and forth between $\palg{\FF}$ and $\pAlg{\FF}$ since $(\TC, \FF)$ is cofibrantly generated.
Let us remark that \cref{prod-exp-general} would not hold as stated above if we had defined uniform (trivial) fibrations as algebras for the pointed endofunctors of the relevant awfs's; instead of an isomorphism we would have only functors back and forth over~$\cal{E}^\to$.

\begin{example} \label{unif-fib-sset}
If we start from the awfs's $(\CC, \TF)$ in~$\SSet$ and~$\CSet$ of \cref{unif-triv-fib-sset} and apply~\cref{thm:sset-cset-nwfs}, we obtain awfs's $(\TC, \FF)$ whose monad algebras are algebraic counterparts of Kan fibrations, and hence will be called \emph{uniform Kan fibrations}.
In the case of $\CSet$, if we instead start with the awfs $(\CC', \TF')$ of \cref{unif-triv-fib-sset}, we obtain an awfs $(\TC', \FF')$ whose right maps are exactly the uniform Kan fibrations considered in~\cite{cohen-et-al:cubicaltt}.
An application to a different version of cubical sets (not using connections) gives the awfs considered in~\cite{swan-awfs}.
\end{example}

The next corollary, which will be useful to establish the functorial Frobenius property for~$(\TC, \FF)$ in \cref{sec:alg-frob}, is the algebraic counterpart of~\cref{thm:kcyl-of-cof-is-trivcof-non-alg}.

\begin{corollary} \label{kcyl-of-cof-is-trivcof}
Leibniz product with endpoint inclusions sends uniform cofibrations to uniform trivial cofibrations, in the sense that the Leibniz product functor lifts as follows for $k \in \braces{0\,,1}$:
\[
\xymatrix{
  \Cof
  \ar@{.>}[r]
  \ar[d]
&
  \TrivCof
  \ar[d]
\\
  \cal{E}^\to
  \ar[r]_-{\delta^k \hatotimes (-)}
&
  \cal{E}^\to
\rlap{.}}
\]
\end{corollary}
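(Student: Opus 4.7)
The plan is to mirror the proof of \cref{thm:kcyl-of-cof-is-trivcof-non-alg}, using the Leibniz adjunction machinery of \cref{sec:ortf} to upgrade that class-level argument to a statement about functors.

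First, I would recast the sought lift adjointly. By \cref{pitchfork-leibniz-most-general-applied}, the adjunction $\delta^k \hatotimes (-) \dashv \hatexp(\bar{\delta}^k, -)$ from~\eqref{equ:leibniz-prod-exp} is an instance of the Leibniz adjunctions handled by \cref{pitchfork-leibniz-most-general-example}. Since $\TrivCof = \liftl{\Fib}$, applying that proposition with $\cal{I} = \Cof$ and $\cal{J} = \Fib$ reduces the construction of the desired lift
\[
\xymatrix@C+2em{
  \Cof \ar@{.>}[r] \ar[d] & \TrivCof \ar[d] \\
  \cal{E}^\to \ar[r]_-{\delta^k \hatotimes (-)} & \cal{E}^\to
}
\]
to the construction of a lift of $\hatexp(\bar{\delta}^k, -) \co \cal{E}^\to \to \cal{E}^\to$ to a functor $\Fib \to \liftr{\Cof}$ over $\cal{E}^\to$.

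Second, I would produce this latter lift as a composite. \cref{prod-exp-general} directly supplies a functor $\Fib \to \TrivFib$ over $\cal{E}^\to$ whose underlying action on arrows is $\hatexp(\bar{\delta}^k, -)$; this plays the role that \cref{thm:char-fib-nonalg} plays in the non-algebraic proof. Since $(\CC, \TF)$ is cofibrantly generated, we have $\Cof = \pcoalg{\CC} = \liftl{\pAlg{\TF}} = \liftl{\TrivFib}$, so the counit of the adjunction~\eqref{garner-adjunction} provides a functor $\TrivFib \to \liftr{\liftl{\TrivFib}} = \liftr{\Cof}$ over $\cal{E}^\to$. Composing yields the desired $\Fib \to \TrivFib \to \liftr{\Cof}$.

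I do not foresee any real obstacle: the abstract machinery of \cref{sec:ortf} was built precisely to let such arguments go through without complications. The only thing that needs verification is that all the constructions fit together over $\cal{E}^\to$, which is immediate from the statements of the cited results, and that the orthogonality-plus-Leibniz-adjunction step of \cref{pitchfork-leibniz-most-general-example} is invoked in the correct direction, which is also straightforward.
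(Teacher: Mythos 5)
Your proposal is correct and follows essentially the same route as the paper's proof: reduce via \cref{pitchfork-leibniz-most-general-example} (instantiated as in \cref{pitchfork-leibniz-most-general-applied}, with $\cal{I} = \Cof$, $\cal{J} = \Fib$) to lifting $\hatexp(\bar{\delta}^k,-)$ to a functor $\Fib \to \liftr{\Cof}$, obtained by composing the lift $\Fib \to \TrivFib$ from \cref{prod-exp-general} with the canonical map $\TrivFib \to \liftr{(\liftl{\TrivFib})} = \liftr{\Cof}$ from the adjunction~\eqref{garner-adjunction}. The only cosmetic remark is that the identification $\Cof = \liftl{\TrivFib}$ holds for any awfs, so the appeal to cofibrant generation there is unnecessary (though harmless).
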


\begin{proof}
Recall that $\Cof = \liftl{\TrivFib}$ and $\TrivCof = \liftl{\Fib}$.
In the following diagram, the first lift is given by the map $\Fib \to \cal{D}_k$ of \cref{prod-exp-general} and the second lift is given by the unit of the adjunction~\eqref{garner-adjunction}:
\[
\xymatrix@C-1.5em{
  \Fib
  \ar@{.>}[rr]
  \ar[d]
&&
  \TrivFib
  \ar@{.>}[rr]
  \ar[dr]
&&
  \liftr{\Cof}
  \ar[dl]
\\
  \cal{E}^\to
  \ar[rrr]_-{\hatexp(\bar{\delta}^k, -)}
&&&
  \cal{E}^\to
\rlap{.}}
\]
The goal now follows from the outer square by applying \cref{pitchfork-leibniz-most-general-example} as in~\cref{pitchfork-leibniz-most-general-applied}.
\end{proof}

\section{The functorial Frobenius property for uniform fibrations}
\label{sec:alg-frob}

We now continue to consider the fixed suitable awfs $(\CC, \TF)$ with generating small category $\cal{I}$ and the induced awfs $(\TC, \FF)$, in which the monad algebras for $\FF$ are exactly the uniform fibrations.
Our aim in this section is to show that $(\TC, \FF)$ satisfies the functorial Frobenius property.

For this, we follow a strategy analogous to the one we used in \cref{sec:frob}.
We begin by organizing strong $k$-oriented homotopy equivalences into a category $\cal{S}_k$.
Its objects are 4-tuples $(f, g, \phi, \psi)$ consisting of an arrow $f \co A \to B$ together with data $g \co B \to A$, $\phi \co \interval \otimes A \to A$, $\psi \co \interval \otimes B \to B$ making $f$ into a strong $k$-oriented homotopy equivalence in the sense of~\cref{def:strhe}.
A morphism $m \co (f, g, \phi, \psi) \to (f', g', \phi', \psi')$ consists of maps $s \co A \to A', t \co B \to B'$ such that the following diagrams commute:
\begin{align*}
\xymatrix{
  A
  \ar[r]^{s}
  \ar[d]_{f}
&
  A'
  \ar[d]^{f'}
\\
  B
  \ar[r]_{t}
&
  B'
\rlap{,}}
&&
\xymatrix{
  B
  \ar[r]^{t}
  \ar[d]_{g}
&
  B'
  \ar[d]^{g'}
\\
  A
  \ar[r]_{s}
&
  A'
\rlap{,}}
&&
\xymatrix{
  \interval \otimes A
  \ar[d]_{\phi}
  \ar[r]^{\interval \otimes s}
&
  I \otimes A'
  \ar[d]^{\phi'}
\\
  A
  \ar[r]_{s}
&
  A'
\rlap{,}}
&&
\xymatrix{
  \interval \otimes B
  \ar[d]_{\psi}
  \ar[r]^{\interval \otimes t}
&
  I \otimes B'
  \ar[d]^{\psi'}
\\
  B
  \ar[r]_{t}
&
  B'
\rlap{.}}
\end{align*}
There is an obvious first projection functor $p_k \co \cal{S}_k \to \cal{E}^\to$.
\cref{strong-h-equiv-as-section} below extends the logical equivalence of \cref{strong-h-equiv-as-section-non-alg} to an isomorphism of categories.
In its statement, we refer to the maps~$\thetak \hatotimes f \co f \to \kcyl \hatotimes f$ in~\eqref{equ:thetak}.

\begin{lemma} \label{strong-h-equiv-as-section}
For $k \in \braces{0\,, 1}$, the category $\cal{S}_k$ of strong $k$-oriented homotopy equivalences can be described isomorphically as the category of arrows $f \in \cal{E}^{\to}$ with a retraction $\rho$ of $\thetak \hatotimes f$.
In detail,
\begin{enumerate}[(i)]
\item objects are pairs $(f, \rho)$ consisting of $f \in \cal{E}^\to$ and a retraction $\rho$ of $\thetak \hatotimes f$, as below:
\[
\xymatrix@C+2em{
  f
  \ar[r]^-{\thetak \hatotimes f}
  \ar@{=}[dr]
&
  \kcyl \hatotimes f \ar[d]^{\rho}
\\&
  f
\rlap{,}}
\]
\item morphisms $\tau \co (f, \rho) \to (f', \rho')$ are maps $\tau \co f \to f'$ such that the below diagram commutes:
\[
\xymatrix@C+2em{
  \kcyl \hatotimes f
  \ar[d]_-{\rho}
  \ar[r]^{\kcyl \hatotimes \tau}
&
 \kcyl \hatotimes f'
  \ar[d]^-{\rho'}
\\
  f
  \ar[r]_{\tau}
&
  f'
\rlap{.}}
\]
\end{enumerate}
\end{lemma}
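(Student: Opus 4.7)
The plan is to upgrade the bijection-on-data already provided by \cref{strong-h-equiv-as-section-non-alg} to an isomorphism of categories. On objects, there is nothing new to prove: the non-algebraic lemma already sets up, for each $f \in \cal{E}^\to$, a bijection between strong $k$-oriented homotopy equivalence structures $(g, \phi, \psi)$ on $f$ and retractions $\rho \co \kcyl \hatotimes f \to f$ of $\thetak \hatotimes f$. So I would simply invoke that lemma to handle the object part, and then spend the work on showing that the bijection is functorial.

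For the morphism part, given $(s, t) \co (f, g, \phi, \psi) \to (f', g', \phi', \psi')$ in $\cal{S}_k$, I would take $\tau = (s, t) \co f \to f'$ as the associated map in $\cal{E}^\to$ and check that the required square
\[
\rho' \cc (\kcyl \hatotimes \tau) = \tau \cc \rho
\]
commutes in $\cal{E}^\to$. Since a commutative square in $\cal{E}^\to$ is equivalent to a pair of commutative squares in $\cal{E}$ (one on domains and one on codomains), I would project and verify each separately. On codomains, $\cod(\rho) = \psi$ and $\cod(\rho') = \psi'$, and the square reduces to $\psi' \cc (\interval \otimes t) = t \cc \psi$, which is exactly the strength-type compatibility of $(s,t)$ with $\psi$ required in $\cal{S}_k$. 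On domains, $\dom(\rho) \co (\interval \otimes A) +_A B \to A$ decomposes, via the universal property of the pushout defining $\kcyl \hatotimes f$ in \eqref{delta-otimes-definition}, into its restrictions $\phi \co \interval \otimes A \to A$ and $g \co B \to A$; likewise for $\rho'$. Commutativity of the domain square then splits into $s \cc \phi = \phi' \cc (\interval \otimes s)$ and $s \cc g = g' \cc t$, which together with the codomain condition are precisely the four naturality squares in the definition of a morphism in $\cal{S}_k$. Thus the assignment $(s,t) \mapsto \tau$ is a bijection on morphisms, and its inverse is read off from the same decomposition.

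Finally, I would observe that functoriality (identities and composition) is automatic from the fact that the bijection is induced by the universal property of the pushout and by the equivalence between commutative squares in $(\cal{E}^\to)^\to$ and their domain/codomain components, both of which preserve identities and compose in the evident way. The main (and really only) obstacle is the bookkeeping of the pushout decomposition: one has to keep straight that a map out of $(\interval \otimes A) +_A B$ is the same data as a pair $(\phi, g)$ making the pushout square commute, and translate between the two languages consistently. No constructive or coherence issue arises beyond that, so the argument is a routine (if slightly tedious) unfolding of definitions on top of \cref{strong-h-equiv-as-section-non-alg}.
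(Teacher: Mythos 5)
Your proposal is correct and follows essentially the same route as the paper, which handles the object part by invoking \cref{strong-h-equiv-as-section-non-alg} and dismisses the morphism part as straightforward; your unwinding of the square $\rho' \cc (\kcyl \hatotimes \tau) = \tau \cc \rho$ into the codomain condition on $\psi$ and, via the pushout decomposition of $\dom(\rho)$, the conditions on $\phi$ and $g$ is exactly the intended verification.
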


\begin{proof}
The object part of the correspondence is essentially \cref{strong-h-equiv-as-section-non-alg}.
The morphism part is straightforward.
\end{proof}

\begin{remark} \label{theta}
As suggested by our notation, the maps~$\thetak \hatotimes f \co f \to \kcyl \hatotimes f$ are really given as the action on objects of a natural transformation $\thetak \hatotimes (-) \co \Id \to \delta^k \hatotimes (-)$ as follows.
Let $\theta^k \co \bot_{\Id} \to \delta^k \otimes (-)$ denote the arrow in $[\cal{E}, \cal{E}^\to] \iso [\cal{E}, \cal{E}]^\to$ given by the following square:
\[
\xymatrix@C+3em{
  0
  \ar[r]^-{\bot_{\Id}}
  \ar[d]_-{\bot_{\Id}}
&
  \Id
  \ar[d]^-{\delta^k \otimes (-)}
\\
  \Id
  \ar[r]_-{\delta^{1-k} \otimes (-)}
&
  \interval \otimes (-)
\rlap{.}}
\]
Writing $(-) \hatotimes (-) \co [\cal{E}, \cal{E}]^\to \times \cal{E}^\to \to \cal{E}^\to$ for the functor obtained by applying the Leibniz construction to the evaluation functor (\cf \cref{pitchfork-leibniz-most-general-applied}), the desired natural transformation is precisely~$\theta^k \hatotimes (-)$.
\end{remark}

\begin{remark} \label{thm:endpoint-are-she-functorially}
Recall from \cref{thm:endpoint-are-she} that the components of the endpoint inclusion $\kcyl \otimes (-)$ are strong $k$-oriented homotopy equivalences.
In fact, this is the action on objects of a factorization of $\kcyl \otimes (-)$ through $\cal{S}_k$:
\[
\xymatrix@C+3em{
&
  \cal{S}_k
  \ar[d]
\\
  \cal{E}
  \ar[r]_{\kcyl \otimes (-)}
  \ar[ur]^{\widetilde{\kcyl \otimes (-)}}
&
  \cal{E}^\to
\rlap{.}}
\]
\end{remark}

The next lemma is the analogue of \cref{thm:kcylf-is-she}.

\begin{lemma} \label{thm:kcylf-is-she-alg}
For $k \in \braces{0\,, 1}$, the functor $\kcyl \hatotimes (-) \co \cal{E}^\to \to \cal{E}^\to$ factors via $\cal{S}_k$ as follows:
\[
\xymatrix@C+3em{
&
  \cal{S}_k
  \ar[d]
\\
  \cal{E}^\to
  \ar[r]_-{\kcyl \hatotimes (-)}
  \ar[ur]^-{\widetilde{\kcyl \hatotimes (-)}\,}
&
  \cal{E}^\to
\rlap{.}}
\]
\end{lemma}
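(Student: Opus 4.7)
By \cref{strong-h-equiv-as-section}, providing the desired factorisation is the same as constructing, naturally in $f \in \cal{E}^\to$, a retraction of $\thetak \hatotimes f \co f \to \kcyl \hatotimes f$ in $\cal{E}^\to$. Thus my goal is to exhibit a natural transformation $\rho \co \kcyl \hatotimes (-) \to \Id_{\cal{E}^\to}$ with $\rho \cc (\thetak \hatotimes (-)) = \id$, assigning to each $f$ such a retraction $\rho_f$.

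I would then recycle the construction from the proof of \cref{thm:kcylf-is-she}, but presented so that naturality in $f$ is manifest. For $f \co X \to Y$, by \cref{strong-h-equiv-as-section-non-alg} specifying $\rho_f$ is equivalent to giving a triple $(g, \phi, \psi)$ satisfying~\eqref{equ:first-three} and~\eqref{equ:second-two}. I would take
\[
g \defeq \iota_1 \cc (\ccyl \otimes Y) \co \interval \otimes Y \to (\interval \otimes X) +_X Y, \qquad \psi \defeq c^k \otimes Y,
\]
and define $\phi$ as the map out of the pushout
\[
\interval \otimes \bigl((\interval \otimes X) +_X Y\bigr) \iso (\interval \otimes \interval \otimes X) +_{\interval \otimes X} (\interval \otimes Y)
\]
(using that $\interval \otimes (-)$ preserves pushouts) induced by $c^k \otimes X \co \interval \otimes \interval \otimes X \to \interval \otimes X$ on the first summand and $\iota_1 \cc (\ccyl \otimes Y)$ on the second; the two maps glue along the span leg $\interval \otimes X$ by the right-hand square of~\eqref{connections:0}. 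The endpoint axioms for $\phi$ and $\psi$, together with the strength condition, then reduce to~\eqref{contractions}, \eqref{connections:0}, and~\eqref{connections:1}, verified exactly as in \cref{thm:kcylf-is-she}.

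The content beyond \cref{thm:kcylf-is-she} is naturality in $f$, and this is the step that makes the present argument more conceptual: each of $g$, $\phi$, $\psi$ has been exhibited as the $f$-component of a natural construction---$g$ and $\psi$ depend on $Y$ only through the natural transformations $\ccyl \otimes (-)$ and $c^k \otimes (-)$, while $\phi$ is assembled via the universal property of a pushout from natural transformations in $f$---so the retractions $\rho_f$ assemble into a natural transformation, yielding the required lift $\widetilde{\kcyl \hatotimes (-)} \co \cal{E}^\to \to \cal{S}_k$. I expect no substantive obstacle beyond bookkeeping; the only mildly delicate point is the compatibility of the data defining $\phi$, which is a direct consequence of pushout-preservation by $\interval \otimes (-)$ together with the naturality of the contraction and the connection.
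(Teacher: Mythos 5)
Your construction does prove the lemma, but the opening reduction is misstated. What the factorisation through $\cal{S}_k$ requires, via \cref{strong-h-equiv-as-section}, is a retraction of $\thetak \hatotimes (\kcyl \hatotimes f)$, natural in $f$ --- that is, strong $k$-oriented homotopy equivalence data on the map $\kcyl \hatotimes f$ --- and not, as you write, a retraction of $\thetak \hatotimes f \co f \to \kcyl \hatotimes f$: a natural transformation $\rho \co \kcyl \hatotimes (-) \to \Id_{\cal{E}^\to}$ splitting $\thetak \hatotimes (-)$ would make \emph{every} map a strong homotopy equivalence, which is false. Fortunately this is only an indexing slip, since the triple $(g, \phi, \psi)$ you actually build is typed for $\kcyl \hatotimes f$ (its ``inverse'' $g$ goes from $\interval \otimes Y$ to $(\interval \otimes X) +_X Y$), so the construction aims at the right object. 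Granting that, the argument is sound: your data are the $k$-generalisation of those in the proof of \cref{thm:kcylf-is-she}; the gluing condition for $\phi$ holds by the right square of~\eqref{connections:0} together with naturality of $\ccyl \otimes (-)$ and the defining pushout square of $\kcyl \hatotimes f$; and since $g$, $\phi$, $\psi$ are assembled from components of the natural transformations $\ccyl \otimes (-)$ and $c^k \otimes (-)$ and from pushout inclusions, every square $f \to f'$ induces a morphism in $\cal{S}_k$, which is exactly the required functoriality.

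This is, however, a genuinely different route from the paper's. The paper deliberately avoids re-running the component-level verification: using \cref{strong-h-equiv-as-section} and the formulation of \cref{theta}, it notes (\cref{thm:endpoint-are-she-functorially}) that $\theta^k \mathbin{\hat{\cc}} \delta^k$ exhibits $\delta^k \otimes (-)$ as a retract of $\delta^k \mathbin{\hat{\cc}} \delta^k$ in $[\cal{E}, \cal{E}]^\to$, and then transports this retraction along $(-) \hatotimes f$, using associativity of the Leibniz construction (the functors involved preserve pushouts) and the fact that functors preserve section--retraction pairs. All use of contractions and connections is thereby concentrated in \cref{thm:endpoint-are-she,thm:endpoint-are-she-functorially}, and naturality in $f$ is automatic; your version instead redoes the explicit homotopy identities of \cref{thm:kcylf-is-she} and adds a (routine but necessary) naturality check by hand. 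Both are valid; the paper's buys conceptual economy, yours is self-contained at the level of diagrams.
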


\begin{proof}
We give a proof using the formalism of Leibniz constructions, as explained in~\cite{riehl-verity:reedy}.
For this, we use the description of $\cal{S}_k$ provided by \cref{strong-h-equiv-as-section} and the notation of \cref{theta}, thus writing simply $\delta^k$ for the natural transformation $\delta^k \otimes (-)$.
We apply the Leibniz construction to the endofunctor composition functor $(-) \cc (-) \co [\cal{E}, \cal{E}] \times [\cal{E}, \cal{E}] \to [\cal{E}, \cal{E}]$, so as to obtain a functor
\[
(-) \mathbin{\hat{\cc}} (-) \co [\cal{E}, \cal{E}]^\to \times [\cal{E}, \cal{E}]^\to \to [\cal{E}, \cal{E}]^\to
.\]

Recall from \cref{thm:endpoint-are-she-functorially} that $\delta^k \co \cal{E} \to \cal{E}^\to$ factors via $\cal{S}_k \to \cal{E}^\to$, \ie that the arrow
\[
\theta^k \hatotimes (\delta^k \otimes X) \co \delta^k \otimes X \to \delta^k \hatotimes (\delta^k \otimes X)
\]
exhibits $\delta^k \otimes X$ as a retract of $\delta^k \hatotimes (\delta^k \otimes X)$, functorially in $X \in \cal{E}$.
This amounts to saying that the map of natural transformations
\[
\theta^k \mathbin{\hat{\cc}} \delta^k \co \delta^k \to \delta^k \mathbin{\hat{\cc}} \delta^k
\]
exhibits $\delta^k$ as a retract of $\delta^k \mathbin{\hat{\cc}} \delta^k$ in $[\cal{E}, \cal{E}]^\to$.

Our goal is to show that $\delta^k \hatotimes (-) \co \cal{E}^\to \to \cal{E}^\to$ can be factored via $\cal{S}_k \to \cal{E}^\to$.
For this, we need to show that the arrow
\[
\theta^k \hatotimes (\delta^k \hatotimes f) \co \delta^k \hatotimes f \to \delta^k \hatotimes (\delta^k \hatotimes f)
\]
exhibits $\delta^k \hatotimes f$ as a retract of $\delta^k \hatotimes (\delta^k \hatotimes f)$, functorially in $f \in \cal{E}^\to$.
But associativity of functor composition implies associativity of the corresponding Leibniz construction (since the involved functors preserve pushouts), and therefore we can restate our goal as asserting that the map
\[
(\theta^k \mathbin{\hat{\cc}} \delta^k) \hatotimes f \co \delta^k \hatotimes f \to (\delta^k \mathbin{\hat{\cc}} \delta^k) \hatotimes f
\]
exhibits $\delta^k \hatotimes f$ as a retract of $(\delta^k \mathbin{\hat{\cc}} \delta^k) \hatotimes f$, functorially in $f \in \cal{E}^\to$.
But functors, in this case the functor $(-) \hatotimes (-) \co [\cal{E}, \cal{E}]^\to \times \cal{E}^\to \to \cal{E}^\to$ mapping $(\alpha, f)$ to $\alpha \hatotimes f$, preserve section-retraction pairs.
\end{proof}

The next lemma is the analogue of~\cref{thm:main-sheretract}.
Define $\cal{S} \defeq \cal{S}_0 + \cal{S}_1$.

\begin{proposition} \label{thm:strong-hequiv} \leavevmode
\begin{enumerate}[(i)]
\item \label{thm:onedir} There is a functor
\[
\xymatrix@!C@C-2.5em{
  \cal{I}_\otimes
  \ar[dr]_{u_\otimes}
  \ar[rr]^F
&&
  \Cof \times_{\cal{E}^\to} \cal{S}
  \ar[dl]
\\&
  \cal{E}^\to
\rlap{.}}
\]
\item \label{thm:twodir} There is a functor
\[
\xymatrix@!C@C-2.5em{
  \Cof \times_{\cal{E}^\to} \cal{S}
  \ar[dr]
  \ar@<4pt>[rr]^{H}
&&
  \TrivCof
  \ar[dl]
\\&
  \cal{E}^\to
\rlap{.}}
\]
\end{enumerate}
\end{proposition}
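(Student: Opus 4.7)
The plan is to algebraicize the set-theoretic argument of \cref{thm:main-sheretract}. Both parts exploit the universal property of the pullback $\Cof \times_{\cal{E}^\to} \cal{S}$, together with \cref{strong-h-equiv-as-section} (the retract description of $\cal{S}_k$), \cref{thm:kcylf-is-she-alg} (the factorization of $\kcyl \hatotimes (-)$ through $\cal{S}_k$), \cref{kcyl-of-cof-is-trivcof} (the algebraic analogue of ``the cylinder of a cofibration is a trivial cofibration''), condition~(S4) of \cref{thm:suitable-awfs}, and the unit of the orthogonality adjunction \eqref{garner-adjunction}.

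For part~(i), I would build two functors covering $u_\otimes$ and combine them via the pullback. For each $k \in \braces{0, 1}$, the functor into $\Cof$ is the composite
\[
\cal{I} \xrightarrow{\eta_\cal{I}} \liftl{(\liftr{\cal{I}})} \iso \Cof \xrightarrow{\widetilde{\kcyl \hatotimes (-)}} \Cof
,\]
using algebraic freeness of $(\CC, \TF)$ on $\cal{I}$ for the identification and condition~(S4) for the second arrow, while the functor into $\cal{S}$ is
\[
\cal{I} \xrightarrow{u} \cal{E}^\to \xrightarrow{\widetilde{\kcyl \hatotimes (-)}} \cal{S}_k \hookrightarrow \cal{S}
,\]
using \cref{thm:kcylf-is-she-alg}. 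Both composites have underlying arrow $\kcyl \hatotimes u$, and the two values of $k$ assemble via the coproduct decomposition \eqref{equ:u-tensor} of $\cal{I}_\otimes$ to yield $\cal{I}_\otimes \to \Cof$ and $\cal{I}_\otimes \to \cal{S}$ both over $u_\otimes$, whence the pullback produces $F$.

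For part~(ii), given $(c, s)$ in $\Cof \times_{\cal{E}^\to} \cal{S}_k$ over a common arrow $f$, with $s$ corresponding via \cref{strong-h-equiv-as-section} to a retraction $\rho \co \kcyl \hatotimes f \to f$ of $\thetak \hatotimes f$, I would first apply \cref{kcyl-of-cof-is-trivcof} to $c$ to obtain a uniform trivial cofibration structure on $\kcyl \hatotimes f$, and then transfer it to $f$ by the standard composition/retraction construction: for a uniform fibration $p$ and a square $f \to p$, precompose by $\rho$ to obtain a square $\kcyl \hatotimes f \to p$, solve using the structure on $\kcyl \hatotimes f$, and postcompose the resulting diagonal with the codomain map $\kcylinv \otimes \cod f$ of $\thetak \hatotimes f$; the identities $\rho \cc (\thetak \hatotimes f) = \id$ immediately imply that the result is a diagonal for the original square. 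Assembling over $k \in \braces{0,1}$ via $\cal{S} = \cal{S}_0 + \cal{S}_1$ then gives $H$.

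The main point to be verified is functoriality of $H$ in morphisms of $\Cof \times_{\cal{E}^\to} \cal{S}_k$, which amounts to the implicit fact that categories of algebraic left maps are closed under retracts in $\cal{E}^\to$ in a functorial way. This is the algebraic counterpart of the closure of $\TrivCof$ under retracts used in the set-theoretic proof of \cref{thm:main-sheretract}. Concretely, a morphism $\tau \co (c, \rho) \to (c', \rho')$ satisfies the compatibility $\rho' \cc (\kcyl \hatotimes \tau) = \tau \cc \rho$ coming from the definition of $\cal{S}_k$, which together with the functoriality of \cref{kcyl-of-cof-is-trivcof} in $\tau \co c \to c'$ ensures that the transferred structures on $f$ and $f'$ make $\tau$ into a morphism in $\TrivCof$. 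This verification is routine but is the key place where the algebraic setting genuinely diverges from its set-theoretic counterpart.
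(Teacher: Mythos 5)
Your proposal is correct and follows essentially the same route as the paper: part~(i) is the paper's proof verbatim (the embedding $\cal{I} \to \Cof$ together with condition~(S4) for the $\Cof$-factor, \cref{thm:kcylf-is-she-alg} for the $\cal{S}$-factor, assembled over the coproduct decomposition of $\cal{I}_\otimes$). For part~(ii) you inline, by an explicit retract-lifting construction and a hands-on check of functoriality, exactly what the paper packages abstractly by mapping into the retract closure $\overline{\Cof}$, transporting it to $\overline{\TrivCof}$ via \cref{kcyl-of-cof-is-trivcof}, and then invoking \cref{retract-closure} and the adjunction~\eqref{garner-adjunction}, so the two arguments agree in substance.
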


\begin{proof}
We begin by considering~(i).
For $k \in \braces{0 \, , 1}$, we show that there is a functor
\[
\xymatrix@!C@C-2em{
  \cal{I}
  \ar[dr]_{\kcyl \hatotimes u} \ar[rr]^{M_k}
&&
  \Cof \times_{\cal{E}^\to} \cal{S}_k
  \ar[dl]
\\&
   \cal{E}^\to
\rlap{.}}
\]
The map to the first factor is given by the embedding $\cal{I} \to \Cof$ and the assumption that $\Cof$ is closed under Leibniz product with the endpoint inclusions, which is condition~(S4) of \cref{thm:suitable-awfs}.
The map to the second factor is given by \cref{thm:kcylf-is-she-alg}.
The claim in~(i) then follows by combining the cases~$k = 0$ and~$k = 1$.

For~(ii), let us write $c \co \Cof \to \cal{E}^\to$ for the forgetful functor.
We fix again $k \in \braces{0\,, 1}$ and show that there is a functor
\begin{equation} \label{lem:from-strong-hequiv}
\begin{gathered}
\xymatrix@!C@C-2em{
  \Cof \times_{\cal{E}^\to} \cal{S}_k
  \ar[dr]
  \ar[rr]^{N_k}
&&
  \overline{\Cof}
  \ar[dl]^-{\overline{\kcyl \hatotimes c}}
\\&
  \cal{E}^\to
\rlap{.}}
\end{gathered}
\end{equation}
We only describe the action of the functor $N_k$ on an object $(i, \rho)$ with $i \in \Cof$ and $\rho$ a retraction of $\thetak \hatotimes c_i \co c_i \to \kcyl \hatotimes c_i$, leaving the evident definition of the action on arrows to the reader.
Since~$\rho$ exhibits~$c_i$ as a retract of~$\kcyl \hatotimes c_i$, we may define $N_k(i, \rho) \defeq (i, c_i, \thetak \hatotimes c_i, \rho)$.
Observe that this definition makes the diagram for $N_k$ commute.
We then pass from $\overline{\kcyl \hatotimes c} \co \overline{\Cof} \to \cal{E}^\to$ to $\overline{\TrivCof}$ using \cref{kcyl-of-cof-is-trivcof}.
Upon combining the cases $k = 0$ and $k = 1$, this gives a functor $\Cof \times_{\cal{E}^\to} \cal{S} \to \overline{\TrivCof}$ over $\cal{E}^\to$.
The claim in~(ii) then follows by \cref{retract-closure} and the adjunction~\eqref{garner-adjunction}, noting that $\TrivCof$ is a category of left maps.
\end{proof}

\begin{remark} \label{relating-strong-hequiv-and-uniform-fib}
We also have a pair of functors $\liftr{(\Cof \times_{\cal{E}^\to} \cal{S})} \leftrightarrow \Fib$ relating uniform fibrations with right $(\Cof \times_{\cal{E}^\to} \cal{S})$-maps.
This means that a map can be equipped with the structure of a uniform fibration if and only if it can be equipped with the structure of a right $(\Cof \times_{\cal{E}^\to} \cal{S})$-map.
This is the analogue of the equality between sets in \cref{fib-and-she}.
However, these functors do not in general form an equivalence.
\end{remark}

We are now ready to show that the awfs in which the right maps are the uniform fibrations (constructed in~\cref{thm:sset-cset-nwfs}) satisfies the functorial Frobenius property.
For this, we use our characterisation of the functorial Frobenius property in algebraically-free awfs's stated in \cref{thm:frobenius-comparison}.
As the main step, we show that that strong homotopy equivalences and uniform fibrations satisfy the generalized functorial Frobenius condition, \ie the algebraic analogue of \cref{thm:non-alg-frobenius-she}.

\begin{lemma} \label{technical}
The triple $(\cal{S}, \Fib, \cal{S})$ satisfies the generalized functorial Frobenius condition.
\end{lemma}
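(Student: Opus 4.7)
The plan is to promote the proof of \cref{thm:non-alg-frobenius-she} to the functorial setting. Since $\cal{S} = \cal{S}_0 + \cal{S}_1$, \cref{generalized-uniform-frob-product-u} reduces the task to establishing the generalized functorial Frobenius condition for $(\cal{S}_k, \Fib, \cal{S}_k)$ separately for $k \in \braces{0\,, 1}$. Moreover, since categories of right maps are closed under pullback in the codomain, the codomain projection $\Fib \to \cal{E}$ is a Grothendieck fibration, so by \cref{generalized-uniform-frob-nicer} it then suffices to construct a lift $\tilde{P} \co \cal{S}_k \times_{\cal{E}} \Fib \to \cal{S}_k$ of the pullback functor $P$.

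For the construction, I will use the reparametrisation of \cref{strong-h-equiv-as-section}: an object of $\cal{S}_k$ is a pair $(g, \rho)$ with $\rho$ a retraction of $\thetak \hatotimes g$. Given such $(g, \rho)$, a uniform fibration $(p, \phi)$, and a pullback square exhibiting $\bar{g}$ as the pullback of $g$ along $p$, the task is to produce a retraction $\bar{\rho}$ of $\thetak \hatotimes \bar{g}$. Following the proof of \cref{thm:non-alg-frobenius-she}, cartesianness of the pullback square with respect to the codomain fibration reduces this construction to specifying the codomain component of $\bar{\rho}$, which amounts to a diagonal filler in a square whose left map is $\kcylinv \otimes X$ and whose right map is $p$, with bottom arrow $\cod(\rho) \cc (\interval \otimes p)$.

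To obtain this filler functorially in both $(g, \rho)$ and $(p, \phi)$, I will exploit that $\kcylinv \otimes X \iso \kcylinv \hatotimes \bot_X$: by condition~(S2) of \cref{thm:suitable-awfs}, the assignment $X \mapsto \bot_X$ lifts to a functor $\widetilde{\bot} \co \cal{E} \to \Cof$, and composing with the lift $\widetilde{\kcylinv \hatotimes (\arghole)} \co \Cof \to \TrivCof$ provided by \cref{kcyl-of-cof-is-trivcof} equips $\kcylinv \otimes X$ with a uniform trivial cofibration structure functorially in $X$. The chosen fillers belonging to the uniform fibration structure $\phi$ on $p$ then supply the diagonal, functorially in $(p, \phi)$; pairing this codomain component with the domain component induced by the pullback yields $\bar{\rho}$, giving the desired lift $\tilde{P}$. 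The hard part I anticipate is the bookkeeping needed to check naturality of $\bar{\rho}$ in morphisms of both $\cal{S}_k$ and $\Fib$, but this should reduce to assembling the functorialities of (S2), \cref{kcyl-of-cof-is-trivcof}, the pullback, and the $\Fib$-action.
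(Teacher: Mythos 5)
Your proposal is correct and follows essentially the same route as the paper: the same reduction via \cref{generalized-uniform-frob-product-u} and \cref{generalized-uniform-frob-nicer} to constructing a lift $\tilde{P} \co \cal{S}_k \times_{\cal{E}} \Fib \to \cal{S}_k$, the object part taken from \cref{thm:non-alg-frobenius-she}, and the same functoriality ingredients (condition~(S2) of \cref{thm:suitable-awfs}, \cref{kcyl-of-cof-is-trivcof}, and naturality of the chosen fillers in morphisms of $\TrivCof$ and $\Fib$). The morphism-level coherence that you defer as bookkeeping is exactly the diagram chase that constitutes the bulk of the paper's proof, and your identification of the ingredients is the right one.
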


\begin{proof}
The core of the argument is to obtain, for $k \in \braces{0\,, 1}$, a lift $\tilde{P}$ in
\begin{equation} \label{strong-h-equiv-uniform-base-change:goal}
\begin{gathered}
\xymatrix@C+2em{
  \cal{S}_k \times_{\cal{E}} \Fib
  \ar@{.>}[r]^{\tilde{P}}
  \ar[d]
&
  \cal{S}_k
  \ar[d]
\\
  \cal{E}^\to \times_{\cal{E}} \cal{E}^\to
  \ar[r]_{P}
&
  \cal{E}^\to
}
\end{gathered}
\end{equation}
where $P \co \cal{E}^\to \times_{\cal{E}} \cal{E}^\to \to \cal{E}^\to$ is the pullback functor, sending a cospan $(g, h)$ to $h^* g$.
The goal then follows from \cref{generalized-uniform-frob-nicer} upon combining the cases $k = 0$ and $k = 1$ using \cref{generalized-uniform-frob-product-u}.

So it remains to show how to obtain the lift in \eqref{strong-h-equiv-uniform-base-change:goal}.
Its action on objects is described in the proof of~\cref{thm:non-alg-frobenius-she}.
For the lift of the action on morphisms, suppose we are given a map $j \to j'$ in $\Fib$, inducing a square
\[
\xymatrix{
  X
  \ar[r]^{v_j}
  \ar[d]_{s}
&
  Y
  \ar[d]^{t}
\\
  X'
  \ar[r]_{v_{j'}}
&
  Y'
\rlap{,}}
\]
and a map $\tau \co (g, \rho) \to (g', \rho')$ in $\cal{S}_k$, with $\tau$ forming a square
\[
\xymatrix{
  B
  \ar[r]
  \ar[d]_{g}
&
  B'
  \ar[d]^{g'}
\\
  Y
  \ar[r]_-{t}
&
  Y'
}
\]
and commuting with the retractions $\rho$ and $\rho'$ as follows:
\[
\xymatrix{
  \kcyl \hatotimes g
  \ar[r]^-{\rho}
  \ar[d]_{\kcyl \hatotimes \tau}
&
  g
  \ar[d]^{\tau}
\\
  \kcyl \hatotimes g'
  \ar[r]_-{\rho'}
&
  g'
\rlap{.}}
\]
Let $(\bar{g}, \bar{\rho})$ and $(\bar{g}', \bar{\rho}')$ denote the respective action of $\tilde{P}$ on the objects $(g, \rho, j)$ and $(g', \rho', j')$ as constructed in \cref{thm:non-alg-frobenius-she}.
Recall that this includes pullback squares $\sigma \co \bar{g} \to g$ and $\sigma' \co \bar{g}' \to g'$ with bottom side $v_j \co X \to Y$ and $v_{j'} \co X' \to Y'$, respectively, as in~\eqref{non-alg-strong-h-equiv-uniform-base-change:0}.
The square $\tau \co g \to g'$ pulls back to a square $\bar{\tau} \co \bar{g} \to \bar{g'}$ with bottom side $v_{j'}$.
We want to show that $\bar{\tau}$ in addition forms a morphism of strong $k$-oriented homotopy equivalences from $(\bar{g}, \bar{\rho})$ to $(\bar{g}', \bar{\rho}')$.
For this, we have to verify commutativity of the following diagram:
\[
\xymatrix{
  \kcyl \hatotimes \bar{g}'
  \ar[r]^-{\bar{\rho}}
  \ar[d]_{\kcyl \hatotimes \bar{\tau}}
&
  \bar{g}
  \ar[d]^{\bar{\tau}}
\\
  \kcyl \hatotimes \bar{g}'
  \ar[r]_-{\bar{\rho}'}
&
  \bar{g}'
\rlap{.}}
\]
Recall the construction of $\bar{\rho}$ and $\bar{\rho}'$, omitting horizontal composite identities for readability:
\[
\xymatrix@!C{
  \bar{g}
  \ar[rr]^-{\thetak \hatotimes \bar{g}}
  \ar[dd]_{\sigma}
  \ar[dr]^{\bar{\tau}}
&&
  \kcyl \hatotimes \bar{g}
  \ar@{.>}[rr]^-{\bar{\rho}}
  \ar[dd]^(0.3){\kcyl \hatotimes \sigma}
  \ar[dr]^{\kcyl \hatotimes \bar{\tau}}
&&
  \bar{g}
  \ar[dd]^(0.3){\sigma}
  \ar[dr]^{\bar{\tau}}
\\&
  \bar{g}'
  \ar[rr]^-(0.3){\thetak \hatotimes \bar{g}'}
  \ar[dd]_(0.3){\sigma'}
&&
  \kcyl \hatotimes \bar{g}'
  \ar@{.>}[rr]^-(0.3){\bar{\rho}'}
  \ar[dd]^(0.3){\kcyl \hatotimes \sigma'}
&&
  \bar{g}'
  \ar[dd]^{\sigma'}
\\
  g
  \ar[rr]^-(0.25){\thetak \hatotimes g}
  \ar[dr]^{\tau}
&&
  \kcyl \hatotimes g
  \ar[rr]^-(0.3){\rho}
  \ar[dr]^{\kcyl \hatotimes \tau}
&&
  g
  \ar[dr]^{\tau}
\\&
  g'
  \ar[rr]^-{\thetak \hatotimes g'}
&&
  \kcyl \hatotimes g
  \ar[rr]^-{\rho'}
&&
  g
\rlap{.}}
\]
Our goal is to show that the top right square commutes.
Since that square commutes after composing it with the pullback square $\sigma'$, it suffices to show that the square commutes when projected to codomains, again omitting horizontal composite identities:
\[
\xymatrix@C+2em{
  X
  \ar[rr]^-{\kcylinv \otimes X}
  \ar[dd]_{v_j}
  \ar[dr]^{s}
&&
  \interval \otimes X
  \ar@{.>}[rr]^-{\cod(\bar{\rho})}
  \ar[dd]^(0.35){\interval \otimes v_j}
  \ar[dr]^{\interval \otimes s}
&&
  X
  \ar[dd]^(0.35){v_j}
  \ar[dr]^{s}
\\&
  X'
  \ar[rr]^-(0.3){\kcylinv \otimes X'}
  \ar[dd]^(0.3){v_{j'}}
&&
  \interval \otimes X'
  \ar@{.>}[rr]^-(0.3){\cod(\bar{\rho}')}
  \ar[dd]^(0.3){\interval \otimes v_{j'}}
&&
  X'
  \ar[dd]^{v_{j'}}
\\
  Y
  \ar[rr]^-(0.25){\kcylinv \otimes Y}
  \ar[dr]^{t}
&&
  \interval \otimes Y
  \ar[rr]^-(0.3){\cod(\rho)}
  \ar[dr]^{\interval \otimes t}
&&
  Y
  \ar[dr]^{t}
\\&
  Y'
  \ar[rr]^-{\kcylinv \otimes Y'}
&&
  \interval \otimes Y'
  \ar[rr]^-{\cod(\rho')}
&&
  Y'
\rlap{.}}
\]
But this follows from coherence of lifts in the following morphism of lifting problems:
\[
\xymatrix@C+1em{
  X
  \ar@{=}[rrrr]
  \ar[dd]_{\kcylinv \otimes X}
  \ar[dr]^{s}
&&&&
  X
  \ar[dd]_(0.3){v_j}
  \ar[dr]^{s}
\\&
  X'
  \ar@{=}[rrrr]
  \ar[dd]^(0.66){\kcylinv \otimes X'}
&&&&
  X'
  \ar[dd]^{v_{j'}}
\\
  \interval \otimes X
  \ar[rr]^(0.7){I \otimes v_j}
  \ar@{.>}[uurrrr]^(0.67){\cod(\bar{\rho})}
  \ar[dr]_{I \otimes s}
&&
  \interval \otimes Y
  \ar[rr]_(0.7){\cod(\rho)}
  \ar[dr]^(0.7){\interval \otimes t}
&&
  Y
  \ar[dr]^{t}
\\&
  \interval \otimes X'
  \ar[rr]_{I \otimes v_{j'}}
  \ar@{.>}[uurrrr]^(0.67){\cod(\bar{\rho}')}
&&
  \interval \otimes Y'
  \ar[rr]_{\cod(\rho')}
&&
  Y'
\rlap{.}}
\]
Here, the left and right faces form morphisms in $\TrivCof$ (using condition~(S2) of \cref{thm:suitable-awfs,kcyl-of-cof-is-trivcof}) and $\Fib$, respectively, making the lifts cohere as needed.
\end{proof}

\begin{theorem} \label{uniform-fibrations-uniform-frobenius}
The awfs $(\mathsf{C_t}, \mathsf{F})$ satisfies the functorial Frobenius condition.
\end{theorem}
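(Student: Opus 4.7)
My plan is to apply \cref{thm:frobenius-comparison} to the algebraically-free awfs $(\TC, \FF)$, which reduces the functorial Frobenius condition to the generalized functorial Frobenius condition for the triple $(\cal{I}_\otimes, \pAlg{\FF}, \pcoalg{\TC}) = (\cal{I}_\otimes, \Fib, \TrivCof)$, since by construction $\pAlg{\FF} = \Fib$ and $\pcoalg{\TC} = \TrivCof \leftrightarrow \liftl{(\liftr{\cal{I}_\otimes})}$ over $\cal{E}^\to$.

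To establish this generalized functorial Frobenius property, I would assemble the triple from simpler pieces. First, condition~(S3) of \cref{thm:suitable-awfs} states that $(\Cof, \cal{E}^\to, \Cof)$ satisfies the generalized functorial Frobenius condition; pulling back the middle slot along the forgetful functor $\Fib \to \cal{E}^\to$ via \cref{generalized-uniform-frob-functorial} yields the same property for $(\Cof, \Fib, \Cof)$. Combining this with \cref{technical}, which gives the property for $(\cal{S}, \Fib, \cal{S})$, an application of \cref{generalized-uniform-frob-product-u} (taking fibre products over $\cal{E}^\to$ in the outer slots while keeping $\Fib$ fixed in the middle) produces the triple
\[
(\Cof \times_{\cal{E}^\to} \cal{S}, \; \Fib, \; \Cof \times_{\cal{E}^\to} \cal{S}) \, .
\]

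Finally, I would transport this to the desired triple $(\cal{I}_\otimes, \Fib, \TrivCof)$ by one more application of \cref{generalized-uniform-frob-functorial}, using the functors furnished by \cref{thm:strong-hequiv}: part~(i) supplies a functor $\cal{I}_\otimes \to \Cof \times_{\cal{E}^\to} \cal{S}$ over $\cal{E}^\to$ (covering the contravariant left slot), part~(ii) supplies a functor $\Cof \times_{\cal{E}^\to} \cal{S} \to \TrivCof$ over $\cal{E}^\to$ (covering the covariant right slot), and the identity handles the middle slot $\Fib$.

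Since all the substantive content has been isolated into the earlier lemmata---the genuinely geometric step lives in \cref{technical}, the algebraic analogue of \cref{thm:non-alg-frobenius-she}, while \cref{thm:strong-hequiv} packages the relationship between generating uniform trivial cofibrations, strong homotopy equivalences, and uniform trivial cofibrations---the proof itself should reduce to a clean concatenation of these applications, and I do not anticipate any additional obstacle beyond correctly threading the functoriality results.
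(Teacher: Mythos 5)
Your proposal is correct and follows essentially the same route as the paper: reduce via \cref{thm:frobenius-comparison} to the generalized functorial Frobenius condition for $(\cal{I}_\otimes, \Fib, \TrivCof)$, establish it for $(\Cof \times_{\cal{E}^\to} \cal{S}, \Fib, \Cof \times_{\cal{E}^\to} \cal{S})$ using condition (S3) together with \cref{generalized-uniform-frob-nicer,generalized-uniform-frob-functorial,technical,generalized-uniform-frob-product-u}, and transport along the functors of \cref{thm:strong-hequiv}. The only difference is presentational (you state the reduction first, the paper concludes with it), so no gaps to report.
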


\begin{proof}
First, we show that $(\Cof \times_{\cal{E}} \cal{S}, \Fib, \Cof \times_{\cal{E}} \cal{S})$ satisfies the generalized functorial Frobenius condition.
This follows from \cref{generalized-uniform-frob-product-u}: by condition~(S3) in the definition of \cref{thm:suitable-awfs} and \cref{generalized-uniform-frob-nicer}, $(\Cof, \cal{E}^\to, \Cof)$ satisfies the generalized functorial Frobenius condition, hence also $(\Cof, \Fib, \Cof)$ by \cref{generalized-uniform-frob-functorial}; while $(\cal{S}, \Fib, \cal{S})$ satisfies the generalized Frobenius condition by \cref{technical}.

The generalized functorial Frobenius condition for $(\Cof \times_{\cal{E}} \cal{S}, \Fib, \Cof \times_{\cal{E}} \cal{S})$ implies the generalized functorial Frobenius condition for $(\cal{I}_\otimes, \Fib, \TrivCof)$ using functoriality of generalized functorial Frobenius in the form of \cref{generalized-uniform-frob-functorial} with $F$ the functor in~\eqref{thm:onedir} of~\cref{thm:strong-hequiv}, $G$ the identity, and $H$ the functor in~\eqref{thm:twodir} of~\cref{thm:strong-hequiv}.
This implies the functorial Frobenius condition for $(\mathsf{C_t}, \mathsf{F})$ by \cref{thm:frobenius-comparison}.
\end{proof}

As special cases, we obtain the pushforward versions of the Frobenius and Beck-Chevalley condition for uniform fibrations.
First, pushforward lifts to slices of the category of uniform $\cal{I}$-fibrations.

\begin{corollary} \label{uniform-fibrations-frobenius-pushforward} \leavevmode
\begin{enumerate}[(i)]
\item For every uniform fibration
$p \co X \to Y$, pushforward along $p$ lifts to a functor
\[
\xymatrix@C+2em{
  \Fib_{/X}
  \ar[r]^{\widetilde{p_*}}
  \ar[d]
&
  \Fib_{/Y}
  \ar[d]
\\
  \cal{E}_{/X}^\to
  \ar[r]_{p_*}
&
  \cal{E}_{/Y}^\to
\rlap{.}}
\]
\item For every map of uniform $\cal{I}$-fibrations $(s, t) \co p \to q$, where $p \co X \to Y$ and $q \co U \to V$, the canonical natural transformation $\psi \co t^* q_* \to p_* s^*$ lifts to a natural transformation
\[
\xymatrix@C+2em{
  \Fib_{/U}
  \ar[r]^{\widetilde{q_*}}
  \ar[d]_{\widetilde{s^*}}
  \ar@{}[dr]|{\textstyle\Downarrow \rlap{$\labelstyle \widetilde{\psi}$}}
&
  \Fib_{/V}
  \ar[d]^{\widetilde{t^*}}
\\
  \Fib_{/X}
  \ar[r]_{\widetilde{p_*}}
&
  \Fib_{/Y}
\rlap{.}}
\]
If $(s, t) \co p \to q$ forms a pullback square, then $\widetilde{\psi}$ is a natural isomorphism.
\end{enumerate}
\end{corollary}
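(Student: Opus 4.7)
The plan is to derive both items from \cref{uniform-fibrations-uniform-frobenius} by first translating the functorial Frobenius condition into its generalized (pullback-free) form via \cref{thm:alt-frob}, and then into the pushforward reformulation supplied by \cref{lift-pushforward}.

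First I would observe that, by \cref{thm:alt-frob}, \cref{uniform-fibrations-uniform-frobenius} is equivalent to the generalized functorial Frobenius condition for the triple $(\pcoalg{\TC}, \palg{\FF}, \liftl{(\liftr{\pcoalg{\TC}})})$. Since $(\TC, \FF)$ is cofibrantly generated, there are functors over $\cal{E}^\to$ in both directions between $\palg{\FF}$ and $\pAlg{\FF} = \Fib$, as noted after \cref{thm:sset-cset-nwfs}; and $\pcoalg{\TC} = \TrivCof$ by definition. Invoking functoriality in the form of \cref{generalized-uniform-frob-functorial} therefore upgrades this to the generalized functorial Frobenius condition for the triple $(\TrivCof, \Fib, \liftl{(\liftr{\TrivCof})})$.

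Next I would feed this into \cref{lift-pushforward} with $\cal{I} = \cal{K} = \TrivCof$ (each carried by its own forgetful functor to $\cal{E}^\to$) and $\cal{J} = \Fib$. Item~(i) of that proposition supplies, for every uniform fibration $p \co X \to Y$, a lift of $p_* \co \cal{E}^\to_{/X} \to \cal{E}^\to_{/Y}$ to a functor $\liftr{\TrivCof}_{/X} \to \liftr{\TrivCof}_{/Y}$, while item~(ii) supplies the analogous lift of the canonical natural transformation between iterated pushforwards and pullbacks.

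Finally I would land the result in $\Fib$-slices. The unit $\cal{I}_\otimes \to \liftl{(\liftr{\cal{I}_\otimes})} = \TrivCof$ and counit $\Fib = \liftr{\cal{I}_\otimes} \to \liftr{\TrivCof}$ of adjunction~\eqref{garner-adjunction} yield functors $F \co \Fib \to \liftr{\TrivCof}$ and $G \co \liftr{\TrivCof} \to \Fib$ over $\cal{E}^\to$; slicing them produces functors over the identity on the ambient slice arrow categories, and conjugating the output of the previous step by $F_{/X}$ and $G_{/Y}$ yields the desired $\widetilde{p_*} \co \Fib_{/X} \to \Fib_{/Y}$ together with a transported $\widetilde{\psi}$ in the $\Fib$-slices. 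For the isomorphism clause in item~(ii), when $(s, t)$ forms a pullback square the underlying $\psi$ is the classical Beck--Chevalley isomorphism, and since the forgetful functor $\Fib \to \cal{E}^\to$ reflects isomorphisms (as usual for monad algebras over the awfs monad $\FF$), $\widetilde{\psi}$ is then a natural isomorphism as well. The only subtlety worth watching is the last step: $\liftr{\TrivCof} \leftrightarrow \Fib$ is a pair of functors rather than an equivalence, so the transport must be done explicitly via $F$ and $G$ rather than by a formal identification, but this is bookkeeping and presents no real obstacle.
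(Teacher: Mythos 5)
Your argument is correct, and it follows the same basic pipeline as the paper (functorial Frobenius condition $\Rightarrow$ generalized functorial Frobenius condition $\Rightarrow$ the pushforward reformulation of \cref{lift-pushforward}), but it differs in which triple it feeds into \cref{lift-pushforward}. The paper invokes \cref{thm:frobenius-comparison} for the algebraically-free awfs $(\TC, \FF)$, which lands directly on the generating triple $(\cal{I}_\otimes, \liftr{(\cal{I}_\otimes)}, \liftl{(\liftr{(\cal{I}_\otimes)})}) = (\cal{I}_\otimes, \Fib, \TrivCof)$; applying \cref{lift-pushforward} with $\cal{I} = \cal{K} = \cal{I}_\otimes$ then yields lifts between slices of $\liftr{(\cal{I}_\otimes)} = \Fib$ on the nose, so the corollary drops out with no further identification. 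You instead use \cref{thm:alt-frob} together with \cref{generalized-uniform-frob-functorial} to reach the triple $(\TrivCof, \Fib, \liftl{(\liftr{\TrivCof})})$, so \cref{lift-pushforward} hands you lifts between slices of $\liftr{\TrivCof}$ rather than of $\Fib$, and you must then conjugate by the comparison functors $F \co \Fib \to \liftr{\TrivCof}$ and $G \co \liftr{\TrivCof} \to \Fib$ coming from the unit and counit of \eqref{garner-adjunction}. This transport does work, but the step you dismiss as bookkeeping is the one place requiring a genuine (if routine) verification for part~(ii): the square $\widetilde{\psi}$ you obtain must have as its vertical edges the canonical lifts $\widetilde{s^*}, \widetilde{t^*}$ of pullback to $\Fib$-slices appearing in the statement, so you need that $F$ and $G$ intertwine the lifted base-change functors on $\Fib$ and on $\liftr{\TrivCof}$; this holds because the chosen fillers of right maps are natural with respect to the pullback squares (the same naturality underlying the construction of $\widetilde{f^*}$ before \cref{def:uniFrobcond}), but it should be said. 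The payoff of the paper's route is precisely that \cref{thm:frobenius-comparison} absorbs all such comparisons once and for all, making the corollary a two-line consequence; your route trades that for a more self-contained use of \cref{thm:alt-frob} at the cost of this extra compatibility check. Your handling of the Beck--Chevalley clause is fine: $\psi$ is invertible for a pullback square, and $\liftr{u_\otimes} \co \Fib \to \cal{E}^\to$ reflects isomorphisms, so $\widetilde{\psi}$ is invertible, matching the remark after \cref{def:uniFrobcond}.
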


\begin{proof}
The claim follows from \cref{uniform-fibrations-uniform-frobenius} using \cref{thm:frobenius-comparison,lift-pushforward}.
\end{proof}

\begin{example}
The application of \cref{uniform-fibrations-frobenius-pushforward} in $\SSet$ and $\CSet$ shows that pushforward along a uniform Kan fibration preserves uniform Kan fibrations.
Since exponentiation is a special case of pushforward, this result shows also that if $X$ is a uniform Kan complex (defined in the evident way) then $Y^X$ is again a uniform Kan complex, for every $Y$.
In fact, for this easier result, only the base needs to be assumed uniform Kan.
It follows by setting $A \defeq 1$ and $B \defeq 0$ from the general statement that the Leibniz exponential of a uniform Kan fibration $p \co X \to A$ with a cofibration $i \co B \to Y$ gives a uniform Kan fibration $\hatexp(i, p)$.

As is well known, this can be seen as follows.
Let $k \in \braces{0\,, 1}$.
By \cref{prod-exp-general}, $\hatexp(\bar{\delta}^k, p)$ is a uniform trivial Kan fibration.
Since uniform cofibrations are closed under Leibniz product with $i$, by adjointness $\hatexp(i, \hatexp(\bar{\delta}^k, p)) = \hatexp(\bar{\delta}^k, \hatexp(i, p))$ is a uniform trivial Kan fibration.
Combining the cases $k = 0$ and $k = 1$ and using \cref{prod-exp-general} in the reverse direction, it follows that $\hatexp(i, p)$ is a uniform Kan fibration.
\end{example}

\section{Uniform fibrations in presheaf categories}
\label{sec:fib-psh}

The aim of this final section is to study in more detail the notion of a uniform fibration in the case when $\cal{E}$ is a presheaf category.
Let us begin by fixing the setting in which we shall be working.
First, let $\mathbb{C}$ be a small category and fix $\cal{E} \defeq \Psh(\mathbb{C})$.
We write $\yon \co \mathbb{C} \to \cal{E}$ for the Yoneda embedding.
We assume that the functorial cylinder~$\interval \otimes (-) \co \cal{E} \to \cal{E}$ satisfies not only our standing assumptions of having contractions and connections and of possessing a right adjoint (see \cref{sec:bac}), but also the following two conditions:
\begin{enumerate}[({C}1)]
\item $\interval \otimes (-) \co \cal{E} \to \cal{E}$ preserves pullback squares,
\item the natural transformations $\kcyl \otimes (-) \co \Id_{\cal{E}} \to \interval \otimes (-)$, for $k \in \braces{0\,, 1}$, are cartesian.
\end{enumerate}
Second, we let $\cal{M}$ be a full subcategory of $\cal{E}^\to_\cart$ satisfying the following assumptions:
\begin{enumerate}[({M}1)]
\item the elements of $\cal{M}$ are monomorphisms,
\item for every $X \in \cal{E}$, the unique map $\bot_X \co 0 \to X$ is in $\cal{M}$,
\item the elements of $\cal{M}$ are closed under pullback,
\item the elements of $\cal{M}$ are closed under Leibniz product with the endpoint inclusions,
\item the category $\cal{M}$ has colimits and the inclusion $\cal{M} \to \cal{E}^\to_\mathrm{cart}$ preserves them.%
\footnote{This assumption is not used to instantiate our general results, but only for~\cref{partial-map-classifier,unif-vs-non-unif}.}
\end{enumerate}
The next result not only provides us with a wide class of examples of suitable awfs's, but also shows that there are situations in which object-wise assertions, as in (M2)-(M4) above, can be strengthened to functoriality properties, as required to obtain a suitable weak awfs (\cref{thm:suitable-awfs}).

\begin{theorem} \label{rem-lift-suitable}
There exists a suitable awfs $(\CC, \TF)$ on $\cal{E}$ that is algebraically-free on $\cal{M}$, \ie such that $\TrivFib = \liftr{\cal{M}}$.
\end{theorem}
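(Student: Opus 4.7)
My plan is to obtain the awfs via Garner's small object argument applied to a small subcategory of $\cal{M}$, and then to promote the object-wise assumptions~(M2)--(M4) to the functorial closure conditions~(S2)--(S4) using the orthogonality adjunction~\eqref{garner-adjunction} and the Leibniz-adjunction machinery of \cref{sec:ortf,sec:frobc}.

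First, let $\cal{I} \subseteq \cal{M}$ be the full subcategory on those monomorphisms whose codomain is representable. Since $\mathbb{C}$ is small and $\cal{E}$ is locally presentable, subobjects of representables form sets, and each morphism in $\cal{I}$ is a pullback square determined by its bottom edge (a morphism between representables), so $\cal{I}$ is small. By~(M3), $\cal{M}$ is closed under base change to representables, so \cref{awfs-on-arrows-into-representables} gives $\liftr{\cal{I}} = \liftr{\cal{M}}$ over $\cal{E}^\to$. Applying Garner's small object argument to $\cal{I} \to \cal{E}^\to$ produces a cofibrantly generated awfs $(\CC, \TF)$ with $\TrivFib \iso \liftr{\cal{I}} = \liftr{\cal{M}}$ and $\Cof = \liftl{\TrivFib} = \liftl{\liftr{\cal{M}}}$, settling~(S1).

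Next, I promote~(M2)--(M4) to $\cal{M}$-level functoriality. The assignment $X \mapsto \bot_X$ extends to a functor $\cal{E} \to \cal{M}$ via the trivial pullback squares with zero top row, using fullness of $\cal{M} \hookrightarrow \cal{E}^\to_\cart$ together with~(M2). Closure under pullback in~(M3) extends to a functor $\cal{M} \times_{\cal{E}} \cal{E}^\to \to \cal{M}$: the pullback of any commuting square over an $\cal{M}$-arrow is automatically a pullback square, and fullness places the result in $\cal{M}$. For~(M4), assumptions~(C1) and~(C2) together imply that $\delta^k \hatotimes (-)$ preserves pullback squares, so closure under Leibniz product extends to a functor $\cal{M} \to \cal{M}$. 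Composing the first functor with the unit $\cal{M} \to \Cof$ of~\eqref{garner-adjunction} yields~(S2); the remaining two functors, after analogous composition, serve as the starting points of the transfer arguments for~(S3) and~(S4).

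The promoted~(M3) is precisely the generalized functorial Frobenius condition of \cref{generalized-uniform-frob}, in the equivalent form of \cref{generalized-uniform-frob-nicer}, for the triple $(\cal{M}, \cal{E}^\to, \cal{M})$. Passing the third entry through the unit $\cal{M} \to \Cof$ and invoking \cref{generalized-uniform-frob-functorial} promotes the condition to $(\cal{M}, \cal{E}^\to, \Cof)$; \cref{double-pitchfork-in-generalized-uniform-frob} (with $\cal{J} = \cal{E}^\to$) then promotes it further to $(\Cof, \cal{E}^\to, \Cof)$, which by the remark immediately following \cref{thm:suitable-awfs} is~(S3). For~(S4) I apply \cref{pitchfork-leibniz-most-general-example}, instantiated as in \cref{pitchfork-leibniz-most-general-applied}, twice along the Leibniz adjunction $\delta^k \hatotimes (-) \dashv \hatexp(\bar{\delta}^k, -)$: first with $(\cal{I}, \cal{J}) = (\cal{M}, \TrivFib)$, where the left lift $\cal{M} \to \liftl{\TrivFib} = \Cof$ of $\delta^k \hatotimes (-)$ built above converts into an equivalent right lift $\TrivFib \to \liftr{\cal{M}} = \TrivFib$ of $\hatexp(\bar{\delta}^k, -)$; then, using a triangle identity of~\eqref{garner-adjunction} to obtain functors $\liftr{\Cof} = \liftr{\liftl{\liftr{\cal{M}}}} \leftrightarrow \liftr{\cal{M}} = \TrivFib$, I reinterpret this as a right lift $\TrivFib \to \liftr{\Cof}$; a second application of \cref{pitchfork-leibniz-most-general-example} with $(\cal{I}, \cal{J}) = (\Cof, \TrivFib)$ then converts this into the desired left lift $\Cof \to \liftl{\TrivFib} = \Cof$ of $\delta^k \hatotimes (-)$, which is~(S4).

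The main obstacle is the functorial bookkeeping in this final transfer: the object-wise closure of $\cal{M}$ is the easy part (with~(C1)--(C2) needed only for~(M4)), but propagating functoriality across the unit $\cal{M} \to \Cof$ and the induced enlargement of the generators into the orthogonally-closed category $\Cof = \liftl{\liftr{\cal{M}}}$ is delicate, and is exactly what the Leibniz-adjunction framework of \cref{sec:ortf,sec:frobc} is designed to handle.
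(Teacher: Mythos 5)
Your first half coincides with the paper's own proof: restrict $\cal{M}$ to the maps with representable codomain, use \cref{awfs-on-arrows-into-representables} to get $\liftr{\cal{I}} = \liftr{\cal{M}}$, and run Garner's small object argument to obtain the cofibrantly generated awfs. Where the paper dismisses the verification of (S2)--(S4) as ``standard diagram-chasing'', you spell out a promotion of the object-wise assumptions (M2)--(M4) to the required functorial closure of $\Cof$ via the unit and counit of~\eqref{garner-adjunction}, \cref{double-pitchfork-in-generalized-uniform-frob}, and \cref{pitchfork-leibniz-most-general-example}; that transfer machinery is sound, and for (S3) your observation that pulling back a cartesian square along an arbitrary square over the common codomain again yields a cartesian square is correct, so the lift $\cal{M} \times_{\cal{E}} \cal{E}^\to \to \cal{M}$ exists.

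The genuine gap is at the $\cal{M}$-level input to (S4). You claim that (C1) and (C2) imply that $\kcyl \hatotimes (-)$ preserves pullback squares, hence restricts to a functor $\cal{M} \to \cal{M}$. This does not follow from (C1) and (C2) alone. Given a cartesian square $\alpha \co i \to i'$ between elements of $\cal{M}$, those conditions only make the squares feeding into the Leibniz pushouts cartesian (the $\interval \otimes (-)$-image of $\alpha$ and the naturality squares of $\kcyl \otimes (-)$); the Leibniz product itself is computed by a pushout, and pushouts do not in general preserve pullbacks. Concretely, one must show that the comparison map $(\interval \otimes A) +_A B \to (\interval \otimes B) \times_{\interval \otimes B'} \bigl((\interval \otimes A') +_{A'} B'\bigr)$ is invertible, and this needs the fact that the pushouts involved are along monomorphisms in a topos, which are adhesive (van Kampen), so that these pushouts are pullback-stable unions. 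This is exactly the ``crucial property'' the paper's proof flags for condition (S4), and your proposal never invokes it; as written, the key step is asserted with an insufficient justification, even though the conclusion is true in the presheaf setting.
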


\begin{proof}
Let $\cal{I}$ be the full subcategory of $\cal{M}$ spanned by maps with a representable presheaf as codomain.
Since $\cal{I}$ is small, by Garner's small object argument~\cite{garner:small-object-argument} it generates a cofibrantly generated awfs $(\CC, \TF)$ with~$\TrivFib = \pAlg{\TF} = \liftr{\cal{I}}$.
Furthermore, we have~$\liftr{\cal{M}} = \liftr{\cal{I}}$.
This follows from \cref{awfs-on-arrows-into-representables} using that $\cal{M}$ is a full subcategory of~$\cal{E}^\to_\cart$.
Indeed, for a map $f \co X \to Y$ in $\cal{E}$, to give a natural choice of fillers for all diagrams with an arbitrary element of~$\cal{M}$ on the left is the same as to give a natural choice of fillers for only those diagrams which are the form
\[
\xymatrix{
  A
  \ar[r]
  \ar[d]
&
  X
  \ar[d]^f
\\
  \yon(x)
  \ar[r]
&
  Y
}
\]
for $x \in \catC$.

It remains to check that $(\CC, \TF)$ satisfies conditions~(S2)-(S4) in~\cref{thm:suitable-awfs}.
These follow from the corresponding object-wise assumptions~(M2)-(M4) via standard diagram-chasing arguments, which we omit.
For condition~(S4), the crucial property being used in that proof is that elements of $\cal{M}$, being monomorphisms in a topos, are adhesive maps~\cite{garner-lack:adhesive}.
\end{proof}

\begin{remark} \label{rem:colimit-decomp}
Let us point out that the reduction to the lifting problems against maps in $\cal{M}$ to those with a representable codomain in the proof of \cref{rem-lift-suitable} exploits the good behaviour of the right orthogonality functor with respect to colimits as described in~\cref{awfs-on-arrows-into-representables}, which is not available in the setting of ordinary wfs's.
\end{remark}

\begin{example} \label{justify-sset-cset-examples}
Since assumptions (C1)-(C2) and (M1)-(M5) are fulfilled in both $\SSet$ and~$\CSet$, with $\cal{M}$ the category of all monomorphisms and pullback squares (with (M5) a consequence of the existence of a subobject classifier), \cref{rem-lift-suitable} establishes the existence of the awfs $(\TC, \FF)$ of \cref{unif-fib-sset}.
The assumptions are also satisfied if instead in $\CSet$ we choose for $\cal{M}$ the category $\cal{M}'$ of \cref{unif-triv-fib-sset} (with elements classified by the face lattice $\Phi$ of~\cite{cohen-et-al:cubicaltt}), establishing the existence of the awfs $(\TC', \FF')$ of \cref{unif-fib-sset}.
\end{example}

\begin{remark} \label{rem:constructive-small-object}
We discuss how, in the case of $\SSet$ and~$\CSet$, the existence of the awfs's $(\CC, \TF)$, having uniform trivial Kan fibrations as monad algebras, and $(\TC, \FF)$, having uniform Kan fibrations as monad algebras, and the Frobenius property for $(\TC, \FF)$ can be proved constructively, \ie without using the law of excluded middle or the axiom of choice.

We begin by observing that \cref{rem-lift-suitable} can be proved constructively in $\SSet$ and $\CSet$.
In order to see why this is the case, first note that in these examples every subobject of a representable is finitely presentable and that the functorial cylinder $\interval \otimes (-) \co \cal{E} \to \cal{E}$ preserves finitely presentable objects.
Since subobjects of representables are finitely presentable, the values of the inclusion $u \co \cal{I} \to \cal{E}^\to$ are finitely presentable objects of $\cal{E}^\to$.
An inspection of the proof of \cite[Theorem~4.4]{garner:small-object-argument} shows that this suffices to construct the algebraically-free awfs $(\CC, \TF)$ on $u \co \cal{I} \to \cal{E}^\to$, and in fact the sequence constructing the appropriate free monad converges after $\omega$ steps.
Next, note that also \cref{thm:sset-cset-nwfs} can be proved constructively for $\SSet$ and $\CSet$, via a reasoning that is analogous to the one above, observing that also the values of $u_\otimes \co \cal{I}_\otimes \to \cal{E}^\to$ are finitely presentable by the assumption of the functorial cylinder and the fact that finitely presentable objects are closed under pushout.
Finally, the general proofs of \cref{uniform-fibrations-uniform-frobenius,uniform-fibrations-frobenius-pushforward}, establishing the functorial Frobenius condition for $(\TC, \FF)$ and its pushforward analogue, are constructive.

We have therefore obtained a constructive proof that pushforward along a uniform Kan fibration preserves uniform Kan fibrations in $\SSet$ and $\CSet$.
For $\SSet$, this result is a constructive counterpart of the fact that pushforward along a Kan fibration preserves Kan fibrations, which cannot be proved constructively without changes to the definition of Kan fibration~\cite{coquand-non-constructivity-kan}.

Finally, let us point out that if one adds the assumption that elements of $\cal{M}$ are \emph{decidable} monomorphisms, then the argument above carries over without relying on the Power Set axiom to establish the smallness of $\cal{I}$.
\end{remark}

The rest of this section is devoted to giving a characterization of uniform trivial fibrations in terms of the partial map classifier, a result suggested to us by Thierry Coquand and Andr\'e Joyal (see also \cref{rem:bg} below).
We then use this characterization to relate non-algebraic and algebraic notions of (trivial) fibration.

\begin{remark} \label{rem:bg}
A related, but different, approach to generating an awfs $(\CC, \TF)$ from a class of monomorphisms is taken in \cite{bourke-garner-I}.
Recall from \cref{rem-lift-suitable} that $(\CC, \TF)$ is cofibrantly generated by the full subcategory of $\cal{E}^\to_\cart$ spanned by monomorphisms, written $\cal{M}$ below.
Instead, as done in \cite{bourke-garner-I}, one may assume that the chosen monomorphisms are closed under composition and view $\cal{M}$ as a double category.
This then cofibrantly generates an awfs $(\CC', \TF')$ in a double categorical sense.
The monad $\TF'$ in the slice over $Y \in \cal{E}$ is given by the $\cal{M}$-partial map classifier $P_Y$ in the slice over $Y$ defined before \cref{thm:part-map-classifier} below.
For us instead, as implies by \cref{thm:part-map-classifier}, the monad $\TF$ in the slice over $Y \in \cal{E}$ is freely generated by $P_Y$ seen as a pointed endofunctor.
It follows that the awfs's $(\CC, \TF)$ and $(\CC', \TF')$ are different, but related by an isomorphism $\pAlg{\TF} \cong \palg{\TF'}$.
Since $\delta^k \hatotimes (-)$ for $k \in \braces{0\,,1}$ do not lift to endofunctors on the double category of squares in $\cal{E}$, it is not clear how to construct a double categorical version of the generating uniform trivial cofibrations $\cal{I}_\otimes$ to generate a corresponding awfs $(\TC', \FF')$.
\end{remark}

We begin with a simple lemma.

\begin{lemma} \label{identities-in-M}
For all $X \in \cal{E}$, $\id_X \in \cal{M}$.
\end{lemma}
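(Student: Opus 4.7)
The plan is to exhibit $\id_X$ as a pullback of a map already known to belong to $\cal{M}$.

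The starting point is~(M2), which gives $\bot_X \co 0 \to X \in \cal{M}$. Applying~(M4) with $k = 0$ then yields $\lcyl \hatotimes \bot_X \in \cal{M}$. I would compute this Leibniz product explicitly: since the functor $\interval \otimes (-)$ has a right adjoint by~\eqref{equ:cyl-exp-adj}, it preserves colimits, so $\interval \otimes 0 \cong 0$. The defining pushout~\eqref{delta-otimes-definition} of $\lcyl \hatotimes \bot_X$ therefore degenerates, and a direct check of the universal property identifies the induced map $\lcyl \hatotimes \bot_X$ with the endpoint inclusion $\lcyl \otimes X \co X \to \interval \otimes X$. Hence $\lcyl \otimes X \in \cal{M}$.

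Now by~(M1) the map $\lcyl \otimes X$ is a monomorphism, and I would form its pullback along itself in $\cal{E}$. Being the kernel pair of a monomorphism, this pullback is given by $\id_X \co X \to X$, with both projections equal to $\id_X$. Since $\cal{M}$ is closed under pullback by~(M3), we conclude that $\id_X \in \cal{M}$.

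I do not foresee any real obstacle in carrying this out; the only point requiring a little care is the identification of $\lcyl \hatotimes \bot_X$ with $\lcyl \otimes X$, which is an immediate consequence of the cocontinuity of $\interval \otimes (-)$.
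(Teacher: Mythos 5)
Your proof is correct and takes essentially the same route as the paper: both exhibit $\id_X$ as the pullback of $\delta^0 \hatotimes \bot_X$ (i.e.\ the endpoint inclusion $\delta^0 \otimes X$) along itself, obtaining that map in $\cal{M}$ from (M2) and (M4) and then concluding by (M3). The only difference is in how the self-pullback square is justified --- you use (M1) (the map is a monomorphism, so its kernel pair is trivial) where the paper invokes (C2) --- and both justifications are fine.
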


\begin{proof}
The claim follows by inspection of the following diagram:
\[
\xymatrix@C+3em{
  X
  \ar[r]^{\id_X}
  \ar[d]_{\id_X}
  \pullback{dr}
&
  \Id \otimes X
  \ar[d]^{\delta^0 \hatotimes \bot_X}
\\
  \Id \otimes X
  \ar[r]_{\delta^0 \hatotimes \bot_X}
&
  I \otimes X
\rlap{.}}
\]
By conditions~(M2) and~(M4), the right map is in $\cal{M}$.
By condition~(C2), the square is a pullback.
By condition~(M3), the left map $\id_X$ is then in $\cal{M}$.
\end{proof}

Below, as usual, we write $\Omega$ for the subobject classifier of the topos $\cal{E}$ and $\top \co 1 \to \Omega$ for its `true' morphism.

\begin{lemma} \label{partial-map-classifier}
There exists a subobject $K \rightarrowtail \Omega$ and a factorization
\[
\xymatrix{
  1
  \ar[rr]^\top
  \ar[dr]
&&
  \Omega
\\
&
  K
  \ar@{>->}[ur]
}
\]
such that the map $1 \to K$ classifies maps in~$\cal{M}$, in the sense that a map $i \co A \to B$ is in $\cal{M}$ if and only if there exists a pullback
\[
\xymatrix{
  A
  \ar[r]
  \ar[d]_i
  \drpullback
&
  1
  \ar[d]
\\
  B
  \ar[r]
&
  K
\rlap{.}}
\]
\end{lemma}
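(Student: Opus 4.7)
The plan is to define $K$ as the subpresheaf of $\Omega$ whose sections at $c \in \mathbb{C}$ are those sieves $S$ on $c$ such that the corresponding subobject $S \rightarrowtail \yon(c)$ lies in $\cal{M}$. That $K$ is well-defined as a subpresheaf follows from condition~(M3): for $f \co d \to c$ in $\mathbb{C}$ and $S \in K(c)$, the restriction $f^*S$ corresponds to the pullback of $S \rightarrowtail \yon(c)$ along $\yon(f)$, which is again in $\cal{M}$. The `true' morphism $\top \co 1 \to \Omega$ then factors through $K \rightarrowtail \Omega$ because it picks out the maximal sieve at each $c$, corresponding to $\id_{\yon(c)}$, which lies in $\cal{M}$ by~\cref{identities-in-M}.

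For the classifying property, a monomorphism $i \co A \rightarrowtail B$ with characteristic map $\chi \co B \to \Omega$ factors through $K$ precisely when, for every $x \co \yon(c) \to B$, the pullback $x^*i \rightarrowtail \yon(c)$ lies in $\cal{M}$. If $i \in \cal{M}$, this is immediate from~(M3). For the converse, I would express $B$ as the canonical colimit of representables indexed by its category of elements. Since $\cal{E}$ is locally cartesian closed, pullback along $\id_B$ preserves colimits, so $i$ is realised as the colimit in $\cal{E}^\to_\cart$ of the diagram $(c,x) \mapsto x^*i$, with cocone the cartesian squares $x^*i \to i$. By hypothesis this diagram factors through $\cal{M}$, so condition~(M5) yields $i \in \cal{M}$. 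The pullback square witnessing the classification is then recovered by construction of $\chi$.

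The delicate step is verifying that $i$ truly is the above colimit in $\cal{E}^\to_\cart$ and not merely in $\cal{E}^\to$: given any competing cocone to some $j \co C \to D$ in $\cal{E}^\to_\cart$, density of representables assembles the bottom components into a map $B \to D$, and one then shows that the pullback $j|_B \rightarrowtail B$ agrees with $i$ since its restriction along each $x$ coincides with $x^*i$, yielding the unique mediating cartesian square. Once this colimit description is in hand, (M5) closes the argument. A minor subtlety to track is the two directions of the classifying property in the statement: the `only if' direction uses~(M3) directly, and the `if' direction is the colimit argument just described.
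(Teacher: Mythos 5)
Your proposal is correct and follows essentially the same route as the paper: the same definition of $K$ (with $K(c)$ the $\cal{M}$-subobjects of $\yon(c)$), the factorization of $\top$ via \cref{identities-in-M}, condition~(M3) for the forward direction, and condition~(M5), via the colimit-over-representables decomposition in $\cal{E}^\to_\cart$ (cf.\ \cref{left-kan-extension-of-representables}), for the converse. The only inessential difference is organizational: the paper applies the (M5) argument once to the universal map $1 \to K$ itself and then deduces the general case by (M3), whereas you run the same colimit argument directly on the given map $i$.
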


\begin{proof}
For $x \in \cat{C}$, define $K(x)$ to be the set of subobjects of~$\yon(x)$ that are elements of~$\cal{M}$.
The factorisation of $\top \co 1 \to \Omega$ through $K$ follows since identities are contained in $\cal{M}$ by \cref{identities-in-M}.

Any element $i \co A \to B$ of $\cal{M}$ arises as base change of $1 \to K$ along a unique map $B \to K$, sending an element $b \co \yon(x) \to B$, for $x \in \cat{C}$, to the subobject of $\yon(x)$ given by the base change of~$i \co A \to B$ along $b$, which is in $K(x)$ since $i$ is in $\cal{M}$.
For the converse, it suffices to show that~$1 \to K$ is in $\cal{M}$, but this is equivalent to (M5).
\end{proof}

The axioms for the class $\cal{M}$ in (M1)-(M5) can then be rewritten in equivalent form as properties of the classifier $K$, see~\cite{PittsAM:aximct} for details and~\cite{HylandM:firssd,RosoliniG:phd} for related ideas in synthetic domain theory.
Observe that the factorization of $\top \co 1 \to \Omega$ via $K$ gives us a pullback diagram
\begin{equation} \label{partial-map-classifier:0}
\begin{gathered}
\xymatrix{
  1
  \ar@{=}[r]
  \ar[d]
  \pullback{dr}
&
  1
  \ar[d]^{\top}
\\
  K
  \ar@{>->}[r]
&
  \Omega
\rlap{,}}
\end{gathered}
\end{equation}

Let us now fix $Y \in \cal{E}$ and work in the slice category $\cal{E}_{/Y}$.
Let $t \co Y \to K \times Y$ denote the classifier for $\cal{I}_{/Y}$.
Let $\pi \co K \times Y \to Y$ denote the map to the terminal object.
The \emph{$\cal{M}$-partial map classifier} $P_Y$ relative to $Y$ is defined by letting $P_Y \defeq \pi_! \cc t_*$~\cite[A2.4]{johnstone:elephant}.
Note that this is the polynomial endofunctor, in the sense of~\cite{gambino-kock}, associated to the diagram
\[
\xymatrix{
  Y
&
  Y
  \ar[l]_-{\id_K}
  \ar[r]^-{t}
&
  K \times Y
  \ar[r]^-{\pi}
&
  Y
}
\]
Observe that we have $t^* t_* = \Id$ since $t$ is monic, giving rise to a pullback square
\begin{equation} \label{partial-map-classifier:1}
\begin{gathered}
\xymatrix{
  X
  \ar[r]^-{\eta_X}
  \ar[d]
  \pullback{dr}
&
  P_Y X
  \ar[d]
\\
  Y
  \ar[r]_-{t}
&
  K \times Y
}
\end{gathered}
\end{equation}
for every $X \in \cal{E}_{/Y}$.

\begin{theorem} \label{thm:part-map-classifier}
Giving the structure of a uniform trivial fibration on a map $f \co X \to Y$ is equivalent to giving a diagonal filler for the diagram
\begin{equation*}
\begin{gathered}
\xymatrix{
  X
  \ar@{=}[r]
  \ar[d]_{\eta_X}
&
  X
  \ar[d]^{f}
\\
  P_Y X
  \ar[r]
  \ar@{.>}[ur]
&
  Y
\rlap{,}}
\end{gathered}
\end{equation*}
\ie a retraction of $\eta_X$ in $\cal{E}_{/Y}$.
\end{theorem}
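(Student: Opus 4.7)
The plan is to exploit that, by \cref{rem-lift-suitable}, uniform trivial fibration structures on $f \co X \to Y$ coincide with right $\cal{M}$-map structures, and then use \cref{pitchfork-slicing}(i) to translate the question into the slice $\cal{E}_{/Y}$, where the universal property of the partial map classifier $P_Y$ can be used directly. Under this translation, a right $\cal{M}$-map structure on $f$ becomes a right $\cal{M}_{/Y}$-map structure on $X$ viewed as an object of $\cal{E}_{/Y}$, i.e., a coherently chosen family of fillers for lifting problems
\[
\xymatrix@C+1em{
A \ar[r]^a \ar[d]_i & X \ar[d]^f \\
B \ar[r]_b & Y
}
\]
with $i \in \cal{M}_{/Y}$ (equivalently, $i \in \cal{M}$ together with $b \co B \to Y$).

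Step one: exhibit the universal property of $P_Y$. For every $X \in \cal{E}_{/Y}$, the object $P_Y X$ represents the functor sending $B \in \cal{E}_{/Y}$ to the set of pairs $(i \co A \to B, a \co A \to X)$ with $i \in \cal{M}_{/Y}$ and $a$ over $Y$, the correspondence sending $\chi \co B \to P_Y X$ to the pullback of $\eta_X$ along $\chi$ (using that \eqref{partial-map-classifier:1} is a pullback and that pullback of the $\cal{M}$-classifier $t$ classifies elements of~$\cal{M}_{/Y}$). In particular, $\eta_X$ itself is classified by $(\id_X, \id_X)$. Moreover, $\eta_X \in \cal{M}_{/Y}$ by \eqref{partial-map-classifier:1} and condition~(M3).

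Step two: construct the bijection. Given a retraction $r \co P_Y X \to X$ of $\eta_X$ in $\cal{E}_{/Y}$, define, for each lifting problem as above, the filler to be $r \cc \chi \co B \to X$, where $\chi$ classifies $(i, a)$. Pulling back the pair $(i, a)$ along $i \co A \to B$ yields the pair $(\id_A, a)$ with classifying map $\eta_X \cc a$, so $\chi \cc i = \eta_X \cc a$ and therefore $r \cc \chi \cc i = r \cc \eta_X \cc a = a$; the filler lies over $Y$ because all of $\chi$, $r$, and $f$ do. Naturality of these fillers in morphisms $\sigma \co i \to j$ of $\cal{M}_{/Y}$ follows from naturality of the classifying correspondence, which sends the pullback square $\sigma$ to the equation $\chi_j \cc b' = \chi_i$, where $b'$ is the codomain part of $\sigma$. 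Conversely, given a right $\cal{M}_{/Y}$-map structure on $f$, applying it to the lifting problem $(\eta_X, \id_X, \text{bottom structure map})$ produces a retraction of $\eta_X$ in $\cal{E}_{/Y}$.

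Step three: verify these constructions are mutually inverse. Starting from $r$ and forming the associated lifting structure, the chosen filler at $(\eta_X, \id_X)$ is $r \cc \id_{P_Y X} = r$, since $\eta_X$ is classified by $\id_{P_Y X}$. Starting from a lifting structure and forming $r$, the universal property exhibits any $(i, a)$ with classifier $\chi$ as the base change of $(\eta_X, \id_X)$ along $\chi$, so the naturality condition of \cref{def:right-map} forces the filler at $(i, a)$ to equal $\chi$ composed with the filler at $(\eta_X, \id_X)$, namely $r \cc \chi$. The hard part will be bookkeeping around the universal property of $P_Y$ and matching the naturality condition in \cref{def:right-map} with the composition $r \cc (-)$; both verifications are ultimately diagram chases using the pullback \eqref{partial-map-classifier:1} and the fact that $P_Y$ classifies $\cal{M}_{/Y}$-partial maps.
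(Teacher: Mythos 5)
Your proposal is correct, but it takes a genuinely different route from the paper's proof. The paper first reduces to $\liftr{\cal{I}}$ for $\cal{I}$ the restriction of $\cal{M}$ to arrows into representables (using $\liftr{\cal{M}} = \liftr{\cal{I}}$ from the proof of \cref{rem-lift-suitable}), then invokes the first step of Garner's small object argument: a right $\cal{I}$-map structure on $f$ amounts to a single filler for the generic lifting problem $(\Lan_u u)(f) \to f$, and the proof becomes a colimit computation identifying $(\Lan_u u)(f)$ with $\eta_X$ via the pullback~\eqref{partial-map-classifier:1}, the Yoneda lemma, and preservation of colimits by pullback. You bypass both Garner's machinery and the reduction to representable codomains: working with $\liftr{\cal{M}} = \TrivFib$ directly, you build the bijection by hand from the universal property of $P_Y$ as the $\cal{M}$-partial map classifier, and the two identities you isolate are exactly the right ones --- $\chi \cc i = \eta_X \cc a$ (so $r \cc \chi$ is a filler), and the fact that any $(i,a)$ is the base change of the generic pair $(\eta_X, \id_X)$ along its classifier, so that the naturality clause of \cref{def:right-map}, applied to that cartesian square (a morphism of $\cal{M}$ since $\cal{M}$ is full in $\cal{E}^\to_{\cart}$ and $\eta_X \in \cal{M}$), forces the filler at $(i,a)$ to be $r \cc \chi$; the deferred verifications are indeed routine diagram chases. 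What the paper's route buys is brevity: the naturality bookkeeping you postpone is absorbed wholesale into the one-step characterization of right maps, and the Kan-extension calculation packages your universal-property manipulations. What your route buys is self-containedness: you use only \cref{partial-map-classifier} and the adjunction $t^* \dashv t_*$, not the generic-lifting-problem formalism, and the slicing step via \cref{pitchfork-slicing} is harmless (though not strictly needed). Two small wording points: in Step three, ``$\eta_X$ is classified by $\id_{P_Y X}$'' should read ``the pair $(\eta_X, \id_X)$ is classified by $\id_{P_Y X}$'', which is precisely~\eqref{partial-map-classifier:1}; and for $\eta_X \in \cal{M}$ you implicitly also need $t \in \cal{M}$, which follows from \cref{partial-map-classifier} (where (M5) enters) together with (M3).
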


\begin{proof}
Let $\cal{I}$ be the full subcategory of $\cal{M}$ spanned by maps with a representable presheaf as codomain and recall from the proof of \cref{rem-lift-suitable} that we have $\liftr{\cal{M}} = \liftr{\cal{I}}$.
Let $u \co \cal{I} \to \cal{E}^\to$ denote the inclusion.
Following the first step of Garner's small object argument, a lift of a map $f \co X \to Y$ to an element of $\liftr{\cal{I}}$ is given by a diagonal filler in the canonical square $(\Lan_u u)(f) \to f$.
Unfolding the left Kan extension, we have
\begin{align*}
(\Lan_u u)(f)
&=
\colim_{i \in \cal{I}, u_i \to f} u_i
\\&=
\colim_{\sigma : \yon C \to K \times Y,\ \cal{E}_{/Y}(t^* \sigma, X)} \sigma^* t
\\&=
\colim_{\sigma : \yon C \to P_Y(X)} \sigma^* \eta_X
\\&=
\eta_X
.
\end{align*}
Here, the penultimate step uses the pullback square~\eqref{partial-map-classifier:1} and the last step uses the Yoneda lemma and preservation of colimits by pullback.
\end{proof}

\begin{theorem} \label{unif-vs-non-unif} \leavevmode
\begin{enumerate}[(i)]
\item A map can be equipped with the structure of a uniform trivial fibration if and only if it is a trivial fibration, \ie it has the right lifting property with respect to the maps in $\cal{M}$.
\item A map can be equipped with the structure of a uniform fibration if and only if it is a fibration, \ie it has the right lifting property with respect to the generating trivial cofibrations.
\end{enumerate}
\end{theorem}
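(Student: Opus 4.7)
The plan is to establish part~(i) by a short argument based on the partial map classifier characterization of \cref{thm:part-map-classifier}, and then derive part~(ii) from part~(i) by the Leibniz adjunction together with \cref{prod-exp-general}.

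For part~(i), the forward direction is immediate: a uniform trivial fibration structure on $f$ provides, in particular, a diagonal filler against every map in $\cal{M}$, so the underlying map is a trivial fibration.  For the reverse direction, assume $f \co X \to Y$ has the right lifting property with respect to~$\cal{M}$.  By \cref{thm:part-map-classifier}, it suffices to produce a retraction of $\eta_X \co X \to P_Y X$ in $\cal{E}_{/Y}$, which is the same as a diagonal filler in the square
\[
\xymatrix{
  X
  \ar@{=}[r]
  \ar[d]_{\eta_X}
&
  X
  \ar[d]^{f}
\\
  P_Y X
  \ar[r]
&
  Y
\rlap{.}}
\]
So the key step is to check that $\eta_X \in \cal{M}$.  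By \cref{partial-map-classifier}, the map $1 \to K$ is in $\cal{M}$, so by closure of $\cal{M}$ under pullback (M3), the classifier $t \co Y \to K \times Y$, obtained as the pullback of $1 \to K$ along $K \times Y \to K$, lies in $\cal{M}$.  The pullback square \eqref{partial-map-classifier:1} then shows $\eta_X \in \cal{M}$ by another application of~(M3), and so the desired filler exists.

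For part~(ii), I would reduce the claim to part~(i).  By \cref{prod-exp-general}, giving $p \co X \to Y$ the structure of a uniform fibration is equivalent to giving uniform trivial fibration structures on both $\hatexp(\bar\delta^0, p)$ and $\hatexp(\bar\delta^1, p)$.  By part~(i), this is equivalent to each $\hatexp(\bar\delta^k, p)$ being a trivial fibration, \ie having the right lifting property with respect to $\cal{M}$.  By the Leibniz adjunction $\delta^k \hatotimes (-) \dashv \hatexp(\bar\delta^k, -)$ of \eqref{equ:leibniz-prod-exp}, this is in turn equivalent to $p$ having the right lifting property against all maps of the form $\delta^k \hatotimes i$ with $k \in \braces{0\,,1}$ and $i$ in a generating set for the cofibrations of the non-algebraic suitable wfs underlying $(\CC, \TF)$, \ie against all generating trivial cofibrations; this is precisely the condition of being a non-algebraic fibration.

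The only real content is the identification in part~(i); everything else is adjointness and bookkeeping.  The main subtlety to keep in mind is that we must ensure we are comparing uniform (trivial) fibrations with the non-algebraic (trivial) fibrations cofibrantly generated by the same class of monomorphisms $\cal{M}$ (or, equivalently by \cref{awfs-on-arrows-into-representables}, by its restriction to arrows with representable codomain), so that the generating trivial cofibrations on both sides match.
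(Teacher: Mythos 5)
Your proposal is correct and follows essentially the same route as the paper: part~(i) is proved by showing $\eta_X \in \cal{M}$ via \cref{partial-map-classifier}, the pullback square~\eqref{partial-map-classifier:1} and (M3), and then invoking \cref{thm:part-map-classifier}, while part~(ii) is deduced from part~(i) and \cref{prod-exp-general} via the Leibniz adjunction. The paper's own proof is just a more compressed version of this argument (its proof of (ii) is a one-liner leaving the adjunction step implicit), so no further comparison is needed.
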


\begin{proof}
For part (i), since $1 \to K$ is in $\cal{M}$, as proved in \cref{partial-map-classifier}, then also $\eta$ has components in $\cal{M}$ by~\eqref{partial-map-classifier:1}.
Part (ii) follows from part (i) and \cref{prod-exp-general}.
\end{proof}

\begin{example} \label{elegant-reedy}
\cref{unif-vs-non-unif} applies to $\SSet$ and $\CSet$, showing that a map can be equipped with the structure of a (trivial) Kan fibration if and only if it is a (trivial) Kan fibration in the usual sense.
\end{example}

\begin{remark} \label{why-uniform}
Since \cref{unif-vs-non-unif} is proved constructively, it is natural to wonder whether it is possible to develop constructively our theory in the non-algebraic setting, at least for $\SSet$ or~$\CSet$.
The key obstacle to this is that, while the special case of the algebraic result in \cref{rem-lift-suitable} admits a constructive proof in $\SSet$ and $\CSet$, as we explained in~\cref{rem:constructive-small-object}, its non-algebraic counterpart does not seem to provable be constructively, even in the case when $\cal{M}$ consists of all monomorphisms (\cf \cref{thm:generation-presheaf-cisinski,rem:colimit-decomp}).
And such a result is essential for the development since it provides the suitable wfs on which the definition of a fibrations is based.
\end{remark}

\bibliographystyle{plain}
\bibliography{}

\end{document}